\numberwithin{equation}{section}
\numberwithin{figure}{section}
\theoremstyle{plain}
\newtheorem{thm}{\protect\theoremname}[section]
\theoremstyle{definition}
\newtheorem{defn}[thm]{\protect\definitionname}
\theoremstyle{plain}
\newtheorem{prop}[thm]{\protect\propositionname}
\theoremstyle{plain}
\newtheorem{cor}[thm]{\protect\corollaryname}
\theoremstyle{plain}
\newtheorem{lem}[thm]{\protect\lemmaname}
\let\originalleft\left
\let\originalright\right
\renewcommand{\left}{\mathopen{}\mathclose\bgroup\originalleft}
\renewcommand{\right}{\aftergroup\egroup\originalright}
\providecommand{\corollaryname}{Corollary}
\providecommand{\definitionname}{Definition}
\providecommand{\lemmaname}{Lemma}
\providecommand{\propositionname}{Proposition}
\providecommand{\theoremname}{Theorem}
\begin{document}
\title{Differential Geometry on Pointwise Affine Spaces}
\author{Dan Jonsson}
\begin{abstract}
We introduce an alternative formalization of curved spaces in which
the concept of a pointwise affine space, as defined here, replaces
that of a manifold. New or modified definitions of familiar notions
from differential geometry such as connections, torsion, Riemann curvature,
vector fields and differentiation of vector fields are presented,
and examples of applications of the resulting framework are given.
In general, the notions and results considered receive simpler and
more transparent forms than in conventional approaches.
\end{abstract}

\maketitle

\section{Introduction}

It may be said that there are two mathematical languages for differential
geometry: one coordinate-oriented and one coordinate-free. 

The coordinate-oriented language is the oldest one, and bears evidence
of its origins. During the early development of differential geometry,
modern notions of abstract spaces had not yet crystallized. The first
axiomatic definition of affine spaces was given by Weyl in 1918 \cite{key-1},
and in spite of early contributions by Grassmann \cite{key-2} and
Peano \cite{key-3} abstract vector space theory ''was actually not
developed before 1920'' \cite[p. 228]{key-4}. By then, differential
calculus on curved spaces, following Ricci and Levi-Civita \cite{key-5},
had already been used in the theory of General Relativity. Differential
calculus on $\mathbb{R}^{n}$ regarded as a concrete flat space had
morphed into a more modern differential calculus when $\mathbb{R}^{n}$
came to be regarded as a possibly curved space. To clarify this point,
we give a brief and simplified account of generalized differential
calculus on $\mathbb{R}^{n}$.

$\mathbb{R}^{n}$ can be regarded as a space where points, vectors
and their coordinates live. We define a point $x\in\mathbb{R}^{n}$
as a tuple $\left(x_{1},\ldots,x_{n}\right)$, a vector $v\in\mathbb{R}^{n}$
as a tuple $\left(v_{1},\ldots v_{n}\right),$ and denote their their
coordinates by $\left(x^{1},\ldots,x^{n}\right)$ and $\left(v^{1},\ldots v^{n}\right)$,
respectively. A coordinate system is a bijection
\begin{gather*}
\mathsf{C}:\mathbb{R}^{n}\times\mathbb{R}^{n}\rightarrow\mathbb{R}^{n}\times\mathbb{R}^{n},\\
\left(\left(x_{1},\ldots,x_{n}\right),\left(v_{1},\ldots,v_{n}\right)\right)\mapsto\left(\left(x^{1},\ldots,x^{n}\right),\left(v^{1},\ldots,v^{n}\right)\right)
\end{gather*}
such that $x^{i}=f^{i}\left(x\right)$ and $v^{i}=f^{i}\left(v\right)$
for $i=1,\ldots,n$. A Cartesian coordinate system is a coordinate
system such that $x^{i}=f^{i}\left(x_{1},\ldots,x_{n}\right)=x_{i}$
and $v^{i}=f^{i}\left(v_{1},\ldots,v_{n}\right)=v_{i}$ for $i=1,\ldots,n$.
In particular, we let
\[
v\thicksim_{\mathsf{C}}v^{k}
\]
 describe a situation where the vector $v$ has coordinates $\left(v^{1},\ldots,v^{n}\right)$
relative to $\mathsf{C}$.

A vector field $\boldsymbol{v}$ on $\mathbb{R}^{n}$ is a function
\[
\boldsymbol{v}:\mathbb{R}^{n}\rightarrow\mathbb{R}^{n},\qquad x\mapsto v.
\]
Using the Einstein summation convention, we can define the directional
derivative $\nabla_{u}\boldsymbol{v}\left(x\right)$ of the differentiable
vector field $\boldsymbol{v}$ with respect to the vector $u$ at
the point $x$ by requiring that
\begin{equation}
\nabla{}_{u}\boldsymbol{v}\left(x\right)\sim_{C}u^{i}\frac{\partial\boldsymbol{v}^{k}}{\partial x^{i}}\left(x\right),\label{eq:coorder}
\end{equation}
where $C$ is a Cartesian coordinate system. However, if we substitute
coordinates relative to a curvilinear (that is, not necessarily Cartesian)
coordinate system $\overline{C}$ in (\ref{eq:coorder}) then the
definition of $\nabla{}_{u}\boldsymbol{v}\left(x\right)$ may need
to be given another form since, in general,
\[
\nabla_{u}\boldsymbol{v}\left(x\right)\nsim_{\overline{C}}\,\overline{u}^{i}\frac{\partial\overline{\boldsymbol{v}}^{k}}{\partial\overline{x}^{i}}\left(x\right).
\]
We thus need to complement (\ref{eq:coorder}) by a correction term,
which we express by means of a system of scalars $\Gamma_{ij}^{k}$
for each point $x$, so that we obtain a coordinate-system independent
definition
\begin{equation}
\nabla_{u}\boldsymbol{v}\left(x\right)\sim_{\overline{C}}\,\overline{u}^{i}\frac{\partial\overline{\boldsymbol{v}}^{k}}{\partial\overline{x}^{i}}\left(x\right)+\overline{u}^{i}\overline{\boldsymbol{v}}^{j}\Gamma_{ij}^{k}\left(x\right).\label{eq:curvi}
\end{equation}
By the construction of $\Gamma_{ij}^{k}$, we have $\Gamma_{ij}^{k}\left(x\right)=0$
for all $i,j,k,x$ if $\overline{C}$ is Cartesian, so $\Gamma_{ij}^{k}$
measures the discrepancy between $\overline{C}$ and a Cartesian coordinate
system.

We have thus generalized (\ref{eq:coorder}), a formula for calculating
a directional derivative in $\mathbb{R}^{n}$, valid only for a Cartesian
coordinate system, to a formula (\ref{eq:curvi}), valid for a curvilinear
coordinate system. There is, however, another way to look at (\ref{eq:curvi}).
In Einstein's and Grossmann's \emph{Entwurf} article \cite[p. 23]{key-6},
the latter makes a profound observation:
\begin{quote}
{\small Ricci und Levi-Cività haben, ausgehend von den Christoffel\textquoteright schen
Resultaten, ihre Methoden der absoluten, d.h. vom Koordinatensystem
unabhängigen Differentialrechnung entwickelt, die gestatten, den Differentialgleichungen
der mathematischen Physik eine invariante Form zu geben. Da aber die
}{\small\emph{Vektoranalysis des}}{\small{} }{\small\emph{auf beliebige
krummlinige Koordinaten bezogenen euklidischen Raumes}}{\small{} formal
identisch ist mit der}{\small\emph{ Vektoranalysis einer beliebigen
}}{\small{[}---{]} }{\small\emph{Mannigfaltigkeit}}{\small , so bietet
es keine Schwierigkeiten, die vektoranalytischen Begriffsbildungen
{[}---{]} auszudehnen auf die vorstehende allgemeine Theorie von
Einstein. {[}emphasis supplied{]} }{\small\footnote{{\small Starting from the results of Christoffel, Ricci and Levi-Cività
have developed their methods of absolute, i.e., coordinate-system
independent, differential calculus that allows the differential equations
of mathematical physics to be given an invariant form. However, as
}\emph{vector analysis in a Euclidean space with arbitrary curvilinear
coordinate}{\small s is formally identical to }\emph{vector analysis
in an arbitrary manifold}{\small , it does not present any difficulties
to extend the conceptual apparatus of vector analysis {[}so as to
apply to{]} Einstein's general theory presented above.}}}{\small\par}
\end{quote}
In other words, the change of coordinates of  $u$ and $v$ can depend
on a change of the coordinate system (''passive transformation''),
or it can depend on a change of the vectors $u$ and $v$ themselves
(''active transformation''). These are interchangeable points of
view. In the latter case, the vectors are transformed as a consequence
of a deformation of $\mathbb{R}^{n}$ to a curved space. Thus, in
the first case, $\Gamma_{ij}^{k}$ describe the ''curvature'' of
the coordinate system; in the second case $\Gamma_{ij}^{k}$ describe
the curvature of the space itself.

Thus, early differential geometry was characterized by
\begin{itemize}
\item a coordinate-oriented mathematical language,
\item a setting for differential calculus in the form of $\mathbb{R}^{n}$
regarded as a possibly curved space,
\item a tool $\Gamma_{ij}^{k}$, nowadays known as a connection, for describing
the curvature of $\mathbb{R}^{n}$.
\end{itemize}
The coordinate-oriented approach, as amplified by the tensor formalism
\cite{key-5}, is quite powerful, but has two significant weaknesses.
The first one is that $\mathbb{R}^{n}$ has a globally trivial topology
(shape): it cannot naturally represent a sphere, a torus etc. The
second weakness derives precisely from the fact that a coordinate-oriented
mathematical language is used. For example, in modern algebra a vector
is conceived as simply an element of a vector space, not, in a more
convoluted way, as an object the coordinates of which change in a
particular way when the coordinate system changes in a particular
way. A related reproach is that the ''débauches d\textquoteright indices''
(Cartan) used in the coordinate-oriented language tends to obscure
the geometric meaning of objects and operations.

Thus, the emphasis in the further development of differential geometry
has fallen on elaborating an appropriate notion of a manifold that
generalizes $\mathbb{R}^{n}$ so as to accommodate spaces with more
general shapes, and on proposing coordinate-free formulations whenever
feasible. Several sophisticated frameworks have been developed, characterized
by
\begin{itemize}
\item an abstract, algebraically oriented mathematical language,
\item a setting for differential calculus in the form of a fiber bundle
(such as a tangent bundle, a vector bundle, a frame bundle or a principal
bundle) the base space of which is a differentiable manifold, a generalization
of $\mathbb{R}^{n}$ (sometimes $\mathbb{C}^{n})$ that is locally
but not necessarily globally similar to $\mathbb{R}^{n}$ ($\mathbb{C}^{n})$,
\item an abstract notion of a connection on the fiber bundle involved.
\end{itemize}
The price in the form of increased complexity paid to generalize $\mathbb{R}^{n}$
and reduce the use of coordinates is, alas, high. While this increased
complexity serves a purpose for advanced applications, there are many
simpler applications where it is redundant. Worse, a significant disadvantage
of the coordinate-oriented approach persists: the geometric meaning
of abstract algebraic expressions is often as obscure as that of coordinate
expressions. Is it really not possible to find a simpler and simultaneously
more transparent coordinate-free formulation of differential geometry?%
\begin{comment}
With regard to vectors (disregarding tensors in general here), common
core components of this formalization are as follows:
\begin{enumerate}
\item The standard definition of a manifold $M$ (actually phrased in a
coordinate-oriented language).
\item The notion of a tangent vector $v_{p}$ at a point $p$ of $M$, a
''linearization'' of a curve through $p$.
\item The notion of a tangent space $T_{p}$ at $p\in M$, comprising all
tangent vectors at $p$ and forming a vector space.
\item The notion of a tangent bundle $TM$, containing the tangent space
$T_{p}$ at all $p\in M$, where any two tangent spaces are isomorphic. 
\item The notion of a vector field on $M$, defined as a section of $TM$.
\item The notion of a connection on $TM,$ specifying a canonical isomorphism
$I_{p,p'}:T_{p}\rightarrow T_{p'}$ between any $T_{p}$ and all tangent
spaces $T_{p'}$ at $p'$ near $p.$
\item The notion of coordinate-free (covariant) derivative on $TM$, defined
in terms of differences of the form $v_{p}-I_{p,p'}^{-1}\left(v_{p'}\right)$.
\end{enumerate}
In a more general framework, a tangent vector $v_{p}$ at $p$ is
generalized to a tensor $t_{p}$ at $p$ etc.

It is obvious without going into details about coordinate-free definitions
of tangent vectors etc. that this kind of coordinate-free formulation
of differential geometry on curved spaces makes it look slightly overwhelming,
maybe more complicated than it really is. Is it not possible to find
a simpler coordinate-free formulation of differential geometry on
curved spaces?
\end{comment}

Following up on previous work \cite{key-7}, where the key notion
of a pointwise affine space was introduced, this is the question that
will be addressed in this article. In the approach proposed here,
the main elements are
\begin{itemize}
\item a coordinate-free, geometric-algebraic mathematical language,
\item a setting for differential calculus in the form of a pointwise affine
space,
\item a geometrically meaningful connection, defined via a pseudo-derivative
$\Delta_{u}v$ on the pointwise affine space.
\end{itemize}
The price paid for simplicity and transparency is that the present
formulation again applies only to spaces with a simple global topology,
although a generalization to more general cases may be possible (see
Section 10).

In Section 2, we review affine (flat) spaces, and then generalize
to pointwise affine (flat and curved) spaces in Section 3. Using notions
from Section 3, we introduce tools for analyzing discrete and infinitesimal
curvature in Section 4. In Section 5, we then show how torsion and
Riemann curvature can be defined and interpreted in terms of the new
notions, in particular the coordinate-free pseudo-derivative $\Delta_{u}v$.
In Sections 6 -- 8, we define and investigate differentiation of
functions that connect pointwise affine spaces; this includes differentiation
of vector fields. Section 9 contains applications of the theory developed
to some topics of interest in differential geometry. In Section 10,
we make some final remarks, providing a perspective on the theory
of pointwise affine spaces.

\section{\label{sec:Fundamental-notionss2}Affine spaces}

Recall that there are two equivalent ways to define an action of a
group $G$ on a set $X$, in particular, an action of the additive
group of a vector space $V$ on an affine space. We can use a function
$\upphi:G\times X\rightarrow X$ such that $\upphi\left(e,x\right)=x$
and $\upphi\left(g,\upphi\left(h,x\right)\right)=\upphi\left(gh,x\right)$,
where $e$ is the identity in $G$. Then we write $\upphi\left(g,x\right)$
as $g\cdot x$, or $\upphi\left(v,x\right)$ as $x+v$ in the case
of a vector space. Alternatively, we may use a group homomorphism
$\phi:G\rightarrow\mathcal{S}{}_{X}$, where $\mathcal{S}{}_{X}$
is the group of bijections on $X$. Following Berger \cite{key-8},
we use the second approach here.
\begin{defn}
\label{d1}Let $A$ be a non-empty set, $\mathcal{S}_{A}$ the group
of all bijections on $A$ and $V$ a vector space over a field $K$.
Also let $\mathfrak{a}:V\rightarrow\mathcal{S}_{A}$ be a group homomorphism
with $V$ regarded as an abelian group, so that $u+v=v+u$, $\mathfrak{a}\left(0\right)=\mathrm{id}_{\mathcal{S}_{A}}$
and $\mathfrak{a}\left(u+v\right)=\mathfrak{a}\left(u\right)\circ\mathfrak{a}\left(v\right)$
for all $u,v\in V$. $\mathfrak{a}$ is said to be an \emph{affine
action} of $V$ on $A$ when
\begin{enumerate}
\item $\mathfrak{a}$ is injective: if $u,v\in V$ and $\mathfrak{a}\left(u\right)=\mathfrak{a}\left(v\right)$
then $u=v$\emph{;}
\item $\mathfrak{a}$ is transitive: if $p,q\in A$ then there is some $v\in V$
such that $\mathfrak{a}\left(v\right)\left(p\right)=q$. 
\end{enumerate}
An \emph{affine system} is a tuple $\mathcal{A}=\left(V,A,\mathfrak{a}\right)$,
where $\mathfrak{a}:V\rightarrow\mathcal{S}_{A}$ is an affine action
of $V$ on $A$. We denote the set of all affine actions of $V$ on
$A$ by $V\left(A\right)$. The elements of $A$ are called \emph{points},
and $A$ is called a \emph{point space} or, in the context of an affine
system, an \emph{affine space}.
\end{defn}

As the additive group of $V$ is abelian, it follows from (1) and
(2) that $\mathfrak{a}$ is \emph{simply transitive}, that is, for
any two two $p,q\in A$ there is exactly one $v\in V$ such that $\mathfrak{a}\left(v\right)\left(p\right)=q$
\cite[Proposition 1.4.4.1]{key-8}. Conversely, if $\mathfrak{a}$
is simply transitive then $\mathfrak{a}$ is injective and transitive.

Below, $\mathfrak{a}$ will denote the surjective homomorphism $V\rightarrow\mathfrak{a}\left(V\right)$
rather than the homomorphism $V\rightarrow\mathcal{S}_{A}$. As $\mathfrak{a}$
is injective as well, it is a group isomorphism, and we can even extend
$\mathfrak{a}$ to a vector space isomorphism. 
\begin{defn}
Let $V$ be a vector space over $K$ and consider the function 
\[
\mathfrak{s}:K\times\mathfrak{a}\left(V\right)\rightarrow\mathfrak{a}\left(V\right),\qquad\left(\lambda,\mathfrak{a}\left(v\right)\right)\mapsto\lambda\left(\mathfrak{a}\left(v\right)\right):=\mathfrak{a}\left(\lambda v\right).
\]
$\lambda\left(\mathfrak{a}\left(v\right)\right)$ is called the \emph{scalar
product }of\emph{ $\mathfrak{a}\left(v\right)$ by $\lambda$}. We
let $\vec{V}$ denote $\mathfrak{a}\left(V\right)$ equipped with
$\mathfrak{s}$, while $\vec{v}$ denotes $\mathfrak{a}\left(v\right)$
as an element of $\vec{V}$. We also call the elements of $\vec{V}$
\emph{translations} and $\vec{V}$ a \emph{translation space, }while
$V$ may be called \emph{a ground space.}
\end{defn}

\begin{prop}
$\vec{V}$ is a vector space isomorphic to $V$.
\end{prop}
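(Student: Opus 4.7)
The plan is to exhibit $\mathfrak{a}: V \to \vec{V}$ itself as the desired vector space isomorphism, which requires (i) verifying that $\vec{V}$ satisfies the vector space axioms, and (ii) checking that $\mathfrak{a}$ is bijective and respects both operations. Since all structure on $\vec{V}$ is transported from $V$ via $\mathfrak{a}$, the proof is essentially a matter of transferring axioms along an injective map.

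First I would observe that the scalar multiplication $\mathfrak{s}$ is well defined: by hypothesis $\mathfrak{a}$ is injective, so every element of $\mathfrak{a}(V)$ has a unique preimage $v \in V$, and therefore $\lambda \vec{v} := \mathfrak{a}(\lambda v)$ depends only on $\vec{v}$ and $\lambda$. The additive structure on $\vec{V}$ is already present as the subgroup $\mathfrak{a}(V) \subseteq \mathcal{S}_A$ under composition, and the homomorphism identity $\mathfrak{a}(u) \circ \mathfrak{a}(v) = \mathfrak{a}(u+v)$ combined with commutativity of $V$ shows that $\vec{V}$ is an abelian group with identity $\mathrm{id}_{\mathcal{S}_A} = \mathfrak{a}(0)$ and inverses $\mathfrak{a}(v)^{-1} = \mathfrak{a}(-v)$.

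Next I would check the four scalar-multiplication axioms by pushing each one through $\mathfrak{a}$. For instance, $\lambda(\mu \vec{v}) = \mathfrak{a}(\lambda(\mu v)) = \mathfrak{a}((\lambda\mu)v) = (\lambda\mu)\vec{v}$, and $(\lambda+\mu)\vec{v} = \mathfrak{a}((\lambda+\mu)v) = \mathfrak{a}(\lambda v) \circ \mathfrak{a}(\mu v) = \lambda\vec{v} + \mu\vec{v}$, using in each case the corresponding axiom in $V$ together with the definitions of $\mathfrak{s}$ and the group operation on $\vec{V}$; the remaining axioms $1 \cdot \vec{v} = \vec{v}$ and $\lambda(\vec{u}+\vec{v}) = \lambda\vec{u} + \lambda\vec{v}$ are verified identically.

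Finally, $\mathfrak{a}: V \to \vec{V}$ is surjective by construction of $\vec{V} = \mathfrak{a}(V)$, and injective by hypothesis, hence bijective. It preserves addition since it is a group homomorphism, and preserves scalar multiplication by the very definition $\mathfrak{a}(\lambda v) = \lambda\,\mathfrak{a}(v)$. Therefore $\mathfrak{a}$ is a linear isomorphism, and $\vec{V}$ is a vector space isomorphic to $V$. There is no real obstacle here; the only subtle point worth flagging is that injectivity of $\mathfrak{a}$ is precisely what is needed to make the scalar action on $\vec{V}$ unambiguous.
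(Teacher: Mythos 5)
Your proof is correct and follows essentially the same route as the paper: verify the scalar-multiplication axioms on $\vec{V}$ by pushing each one through $\mathfrak{a}$, then observe that $\mathfrak{a}$ is a bijective group homomorphism that preserves scalar multiplication by definition. Your explicit remark that injectivity of $\mathfrak{a}$ is what makes the scalar action well defined is a worthwhile detail the paper leaves implicit, but it does not change the argument.
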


\begin{proof}
The function $\mathfrak{s}$ defines a scalar multiplication on $\vec{V}$
since
\begin{align*}
1\left(\mathfrak{a}\left(v\right)\right) & =\mathfrak{a}\left(1v\right)=\mathfrak{a}\left(v\right),\\
\kappa\lambda\left(\mathfrak{a}\left(v\right)\right) & =\mathfrak{a}\left(\left(\kappa\lambda\right)v\right)=\mathfrak{a}\left(\kappa\left(\lambda v\right)\right)=\kappa\left(\mathfrak{a}\left(\lambda v\right)\right)=\kappa\left(\lambda\left(\mathfrak{a}\left(v\right)\right)\right),\\
\lambda\left(\mathfrak{a}\left(u\right)\circ\mathfrak{a}\left(v\right)\right) & =\lambda\left(\mathfrak{a}\left(u+v\right)\right)=\mathfrak{a}\left(\lambda\left(u+v\right)\right)=\mathfrak{a}\left(\lambda\left(u\right)+\lambda\left(v\right)\right)\\
 & =\mathfrak{a}\left(\lambda\left(u\right)\right)\circ\mathfrak{a}\left(\lambda\left(v\right)\right)=\lambda\left(\mathfrak{a}\left(u\right)\right)\circ\lambda\left(\mathfrak{a}\left(v\right)\right),\\
\left(\kappa+\lambda\right)\left(\mathfrak{a}\left(v\right)\right) & =\mathfrak{a}\left(\left(\kappa+\lambda\right)v\right)=\mathfrak{a}\left(\kappa v+\lambda v\right)=\mathfrak{a}\left(\kappa v\right)\circ\mathfrak{a}\left(\lambda v\right)\\
 & =\kappa\left(\mathfrak{a}\left(v\right)\right)\circ\lambda\left(\mathfrak{a}\left(v\right)\right).
\end{align*}

Furthermore, $\mathfrak{a}:V\rightarrow\vec{V}$ is a vector space
isomorphism since it is a group isomorphism and we have $\mathfrak{a}\left(\lambda v\right)=\lambda\left(\mathfrak{a}\left(v\right)\right)$
by definition.
\end{proof}
To make formulas look more familiar, we will use the notational identities
\begin{gather}
\vec{v}\left(p\right)\equiv p+\vec{v},\qquad\vec{u}\circ\vec{v}\equiv\vec{u}+\vec{v}.\label{eq:nid}
\end{gather}
Note that as $\mathfrak{a}:V\rightarrow\vec{V}$ is an isomorphism,
we have 
\begin{gather*}
\vec{0}\left(p\right)=\mathrm{id}_{\vec{V}}\left(p\right)=p,\\
\vec{v}\left(\vec{u}\left(p\right)\right)=\left(\vec{v}\circ\vec{u}\right)\left(p\right)=\mathfrak{a}\left(v+u\right)\left(p\right)=\mathfrak{a}\left(u+v\right)\left(p\right)=\left(\vec{u}\circ\vec{v}\right)\left(p\right).
\end{gather*}
In the notation from (\ref{eq:nid}), we have
\begin{gather}
p+\vec{0}=p,\label{eq:21}\\
\left(p+\vec{u}\right)+\vec{v}=p+\left(\vec{v}+\vec{u}\right)=p+\left(\vec{u}+\vec{v}\right).\label{eq:22-1}
\end{gather}

Let $v$ and hence $\mathfrak{a}\left(v\right)$ be determined by
the condition $\text{ \ensuremath{\mathfrak{a}}\ensuremath{\left(v\right)\left(p\right)}}=q$
for given $p$ and $q$, and denote $\mathfrak{a}\left(v\right)$
by $q\leftarrow p$, so that by definition
\begin{equation}
\left(q\leftarrow p\right)\left(p\right)=p+\left(q\leftarrow p\right)=q\label{eq:23}
\end{equation}
Since $q\leftarrow p$ is the unique translation on $A$ that sends
$p$ to $q$, we have
\begin{gather}
\left(p\leftarrow p\right)=\vec{0},\label{eq:23-1}\\
\left(r\leftarrow q\right)\circ\left(q\leftarrow p\right)=\left(r\leftarrow q\right)+\left(q\leftarrow p\right)=r\leftarrow p\quad(\mathrm{Weyl's}\;\mathrm{axiom)}.\label{eq:23-2}
\end{gather}
Algebraically, $q\leftarrow p$ behaves like $q-p$ as evident from
(\ref{eq:23}) -- (\ref{eq:23-2}).

\pagebreak{}

As per a popular quip, an affine space is a vector space that has
forgotten its origin. Actually, we can always let a vector space act
on itself as an affine space. Specifically, we can interpret a vector
space $V$ as a point space $V$ in an affine system $\mathcal{V}=\left(V,V,\mathfrak{a}\right)$,
where the second $V$ denotes the underlying set of $V$ and $\mathfrak{a}:V\rightarrow\mathfrak{a}\left(V\right)$
is given by $\mathfrak{a}\left(v\right)=a+v$ for some $a\in V$.
Then the conditions in Definition \ref{d1} are satisfied since for
any $a,b,v\in V$ we have $0+v=v$ and $a+\left(b+v\right)=\left(a+b\right)+v$,
and for any $u,v\in V$ there is exactly one $a\in V$ such that $u=a+v$.
Hence, we can regard a vector space as an affine space with vectors
as points, being acted on by itself, and we have $u\leftarrow v=u-v$.
Note that there is a canonical isomorphism $\mathfrak{ci}_{V}:V\rightarrow\vec{V}$
such that $\mathfrak{ci}_{V}\left(a\right)\left(v\right)=a+v$ for
all $v\in V$.

Conversely, for every $p\in A$ there is a bijection 
\[
\mathfrak{v}\left(p\right):A\rightarrow V,\qquad q\mapsto\mathfrak{v}\left(p\right)\left(q\right):=\mathfrak{a}^{-1}\left(p\leftarrow q\right),
\]
such that $\mathfrak{v}\left(p\right)\left(p\right)=0$ and we can
make $A$ into a vector space $\vec{A}$ by requiring that $\mathfrak{v}\left(p\right)\left(q+r\right)=\mathfrak{v}\left(p\right)\left(q\right)+\mathfrak{v}\left(p\right)\left(r\right)$
and $\mathfrak{v}\left(p\right)\left(\lambda q\right)=\lambda\mathfrak{v}\left(p\right)\left(q\right)$;
$\mathfrak{v}\left(p\right)$ is then an isomorphism between $\vec{A}$
and $V$.

Before concluding the review of affine spaces, we introduce another
useful notion: an \emph{interval} is a pair of points $\left(p,q\right)$,
denoted $p\cdots q$. The interval $p\cdots q$ obviously determines
the unique translation that sends $p$ to $q$. One can think of an
interval $p\cdots q$ as a directed line segment from $p$ to $q$
or as a bound vector $v_{p}$ corresponding to the interval $p\cdots p+\vec{v}$,
in particular, a tangent vector at $p$ in an affine space and something
like a tangent vector at $p$ in classical differential geometry.

\section{Pointwise affine spaces}

Consider an affine system $\mathcal{A}=\left(V,A,\mathtt{\mathfrak{a}}\right)$
and recall the identity (\ref{eq:22-1}), according to which
\[
\left(\left(p+\vec{u}\right)+\vec{v}\right)\leftarrow\left(p+\left(\vec{u}+\vec{v}\right)\right)=\vec{0}
\]
in an affine space. To describe curved spaces we need to relax this
assumption. This can be done by instead assuming that $\mathcal{A}$
is affine in a particular way for each point of $A$ \cite[Sections 3.1--3.2]{key-7}.
\begin{defn}
An \emph{affine action field} of a vector space $V$ on a point space
$A$ is a function
\[
\boldsymbol{\mathfrak{a}}:A\rightarrow V\left(A\right),\qquad p\mapsto\boldsymbol{\mathfrak{a}}\left(p\right).
\]
 A \emph{pointwise affine system} is a tuple $\mathscr{A}\left(V,A,\boldsymbol{\mathfrak{a}}\right)$,
where $\boldsymbol{\mathfrak{a}}$ is an affine action field of $V$
on $A$. In the context of a pointwise affine system, we call the
point set $A$ a \emph{pointwise affine spac}e.
\end{defn}

For each $p\in A$, there is thus an isomorphism
\[
\boldsymbol{\mathfrak{a}}\left(p\right):V\rightarrow\vec{V},\qquad v\mapsto\vec{v}=\boldsymbol{\mathfrak{a}}\left(p\right)\left(v\right).
\]

For convenience, we may use the notation
\begin{equation}
p\left(v\right)\equiv\boldsymbol{\mathfrak{a}}\left(p\right)\left(v\right),\qquad p^{-1}\left(\vec{v}\right)\equiv\mathfrak{\boldsymbol{\mathfrak{a}}}\left(p\right)^{-1}\left(\vec{v}\right)\label{eq:ap-not}
\end{equation}
when it is not necessary to reference $\boldsymbol{\mathfrak{a}}$
explicitly. Employing this simplified notation, we have
\begin{gather*}
p\circ p^{-1}\left(\vec{v}\right)=\vec{v},\quad p^{-1}\circ p\left(v\right)=v,\\
p\left(\lambda u+\kappa w\right)=\lambda p\left(u\right)+\kappa p\left(v\right),\quad p^{-1}\left(\lambda\vec{u}+\kappa\vec{v}\right)=\lambda p^{-1}\left(\vec{u}\right)+\kappa p^{-1}\left(\vec{v}\right),
\end{gather*}
where $u,v\in V$ and $\vec{u},\vec{v}\in\vec{V}.$ We will also use
the notation
\begin{equation}
p+\overline{v}\equiv p+p\left(v\right).\label{eq:pbar}
\end{equation}
Thus, $\left(p+\overline{u}\right)+\overline{v}=\left(p+p\left(u\right)\right)+\left(p+p\left(u\right)\right)\left(v\right)$
etc.
\begin{defn}
Consider a pointwise affine system $\mathscr{A}\left(V,A,\boldsymbol{\mathfrak{a}}\right)$,
We say that $A$ is \emph{affine flat} when $p\left(v\right)=q\left(v\right)$
for any $p,q\in A$ and $v\in V$.
\end{defn}

An affine flat pointwise affine space $A$ is effectively an affine
space. Using the notation introduced in (\ref{eq:ap-not}) and (\ref{eq:pbar}),
we can form an expression that reflects to what extent and in what
way $A$ is curved at $p$, and vanishes everywhere if and only if
$A$ is affine flat.
\begin{prop}
\label{p8}A pointwise affine space $A$ is affine flat if and only
if 
\[
\left(\left(p+\overline{u}\right)+\overline{v}\right)\leftarrow\left(p+\left(p\left(u\right)+p\left(v\right)\right)\right)=\vec{0}
\]
 for all $p\in A$ and $u,v\in V$.
\end{prop}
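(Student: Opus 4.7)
The plan is to prove both directions by carefully unfolding the notation $p+\overline{u}=p+p(u)$ and $(p+\overline{u})+\overline{v}=(p+p(u))+(p+p(u))(v)$, noting that the outer translation in the second expression uses the affine action $\boldsymbol{\mathfrak{a}}(p+p(u))$ at the \emph{shifted} point, while $p(u)+p(v)$ lives in $\vec{V}$ and is applied at $p$ via $\boldsymbol{\mathfrak{a}}(p)$. The crux of the proposition is therefore that affine flatness is exactly what forces $\boldsymbol{\mathfrak{a}}(p+p(u))$ and $\boldsymbol{\mathfrak{a}}(p)$ to agree on $v$ so that the associativity relation \eqref{eq:22-1} can be invoked.

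For the forward direction, assuming $A$ is affine flat, I would observe that $(p+p(u))(v)=p(v)$ by definition of affine flatness, so that $(p+\overline{u})+\overline{v}=(p+p(u))+p(v)$. Applying \eqref{eq:22-1} to the ordinary affine action obtained by fixing $\boldsymbol{\mathfrak{a}}(p)$ (since affine flatness means the action field is constant, giving a genuine affine system over $V$), this equals $p+(p(u)+p(v))$, and subtracting using \eqref{eq:23-1} yields $\vec{0}$.

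For the converse, I would fix arbitrary $p,q\in A$ and $v\in V$ and aim to show $p(v)=q(v)$. Since $\boldsymbol{\mathfrak{a}}(p)\colon V\to\vec{V}$ is an isomorphism and $\boldsymbol{\mathfrak{a}}(p)(V)$ acts transitively on $A$, there is a unique $u\in V$ with $q=p+p(u)=p+\overline{u}$. Substituting this $u$ and the given $v$ into the hypothesised identity gives
\[
\bigl(q+q(v)\bigr)\leftarrow\bigl(p+(p(u)+p(v))\bigr)=\vec{0}.
\]
Using \eqref{eq:22-1} for the single affine action $\boldsymbol{\mathfrak{a}}(p)$ to rewrite $p+(p(u)+p(v))=(p+p(u))+p(v)=q+p(v)$, and then invoking the fact that two points of $A$ differing by $\vec{0}$ are equal together with injectivity of $\boldsymbol{\mathfrak{a}}(q)$, one concludes $q(v)=p(v)$.

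The main obstacle I anticipate is purely notational: keeping straight which affine action is being used at each stage, because the expression $p+\overline{v}$ silently changes the base point at which $\boldsymbol{\mathfrak{a}}$ is evaluated, whereas $p+(p(u)+p(v))$ does not. Once that bookkeeping is handled, both directions reduce to one application of the affine-space associativity \eqref{eq:22-1} together with the bijectivity properties of $\boldsymbol{\mathfrak{a}}(p)$ already established, so no deeper structural input is needed.
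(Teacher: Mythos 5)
Your proof is correct and takes essentially the same approach as the paper's: the forward direction uses flatness to replace $\left(p+p\left(u\right)\right)\left(v\right)$ by $p\left(v\right)$ and then the affine associativity (\ref{eq:22-1}), and the converse selects the unique $u$ with $p+p\left(u\right)=q$ (the paper writes it as $u=p^{-1}\left(q\leftarrow p\right)$) to conclude $q\left(v\right)=p\left(v\right)$. The only cosmetic difference is that the paper first extracts the intermediate identity $\left(p+p\left(u\right)\right)\left(v\right)=p\left(v\right)$ for all $p,u,v$ before specializing $u$, whereas you substitute the specific $u$ directly; and the final cancellation step is really freeness of the translation action at $q$ rather than injectivity of $\boldsymbol{\mathfrak{a}}\left(q\right)$, a negligible point.
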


\begin{proof}
If $p\left(v\right)=q\left(v\right)$ for every $p,q\in A$ and $v\in V$
then 
\begin{align*}
\left(p+\overline{u}\right)+\overline{v} & =p+p\left(u\right)+\left(p+p\left(u\right)\right)\left(v\right)=p+\left(p\left(u\right)+p\left(v\right)\right).
\end{align*}
Conversely, if $\left(p+\overline{u}\right)+\overline{v}=p+\left(p\left(u\right)+p\left(v\right)\right)$
then $p+p\left(u\right)+\left(p+p\left(u\right)\right)\left(v\right)=p+p\left(u\right)+p\left(v\right)$,
so $\left(p+p\left(u\right)\right)\left(v\right)=p\left(v\right)$
for all $p\in A$ and $u,v\in V$, and setting $u=p^{-1}\left(q\leftarrow p\right)$
we obtain
\[
q\left(v\right)=\left(p+p\left(p^{-1}\left(q\leftarrow p\right)\right)\right)\left(v\right)=\left(p+p\left(u\right)\right)\left(v\right)=p\left(v\right).\qedhere
\]
\end{proof}
\begin{cor}
\label{cor:c8}A pointwise affine space $A$ is affine flat if and
only if 
\begin{equation}
\left(p+\overline{u}\right)\left(v\right)-p\left(v\right)=\vec{0},\label{eq:conn-0}
\end{equation}
or equivalently 
\begin{equation}
p^{-1}\circ\left(p+\overline{u}\right)\left(v\right)-v=0,\label{eq:conn-1}
\end{equation}
 for all $p\in A$ and $u,v\in V$.
\end{cor}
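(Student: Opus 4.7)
The plan is to deduce the corollary directly from Proposition~\ref{p8} by rewriting the expression $\bigl((p+\overline{u})+\overline{v}\bigr)\leftarrow\bigl(p+(p(u)+p(v))\bigr)$ into the simpler form $(p+\overline{u})(v)-p(v)$, and then using the fact that $p\colon V\to\vec{V}$ is a linear isomorphism to pass between the two displayed equivalents.

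First I would set $q:=p+\overline{u}=p+p(u)$. Unpacking the notation from equation (\ref{eq:pbar}), the left endpoint in the proposition becomes $(p+\overline{u})+\overline{v}=q+q(v)$, while the right endpoint becomes $p+(p(u)+p(v))=q+p(v)$ by (\ref{eq:22-1}). Using Weyl's axiom (\ref{eq:23-2}) together with (\ref{eq:23}) and (\ref{eq:23-1}), the translation $\bigl(q+q(v)\bigr)\leftarrow\bigl(q+p(v)\bigr)$ equals $q(v)-p(v)=(p+\overline{u})(v)-p(v)$. Hence the condition in Proposition~\ref{p8} is literally the condition (\ref{eq:conn-0}), which proves the first equivalence.

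For the second equivalence I would apply the inverse isomorphism $p^{-1}\colon\vec{V}\to V$. Since $p^{-1}$ is a vector space isomorphism, $(p+\overline{u})(v)-p(v)=\vec{0}$ holds if and only if $p^{-1}\bigl((p+\overline{u})(v)\bigr)-p^{-1}\bigl(p(v)\bigr)=0$, and the second term simplifies to $v$ via $p^{-1}\circ p=\mathrm{id}_V$. This yields (\ref{eq:conn-1}) and completes the argument.

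There is no real obstacle here: the proof is a routine unwinding of the notation. The only point requiring mild care is the step in which the $\leftarrow$-expression of Proposition~\ref{p8} is collapsed to a single translation at the basepoint $q$, which must be justified by Weyl's axiom rather than by an informal ``subtraction'' of points. Once that rewriting is done, linearity of $p^{-1}$ gives the second form for free.
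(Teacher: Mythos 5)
Your proposal is correct, and it matches the paper's (implicit) argument: the paper leaves Corollary \ref{cor:c8} without a separate proof because the computation inside the proof of Proposition \ref{p8} already shows that $\left(p+\overline{u}\right)+\overline{v}=p+\left(p\left(u\right)+p\left(v\right)\right)$ holds exactly when $\left(p+\overline{u}\right)\left(v\right)=p\left(v\right)$, which is what your rewriting via Weyl's axiom establishes. The passage to (\ref{eq:conn-1}) by applying the isomorphism $p^{-1}$ is likewise the intended step, so there is nothing to add.
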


\section{Discrete and infinitesimal curvature}

Throughout this section, we will consider a pointwise affine system
$\mathscr{A}=\left(V,A,\boldsymbol{\mathfrak{a}}\right)$. We first
consider curvature from the point of view of discrete differential
geometry, where we do not yet employ the notion of a limit.

\subsection{Deviation, dissociation and displacement}
\begin{defn}
A \emph{(vector)} \emph{deviation} is a function 
\begin{gather*}
\mathrm{G}:V\times V\times A\rightarrow V,\\
\left(u,v,p\right)\mapsto\mathrm{G}_{u}v\left(p\right):=p^{-1}\circ\left(p+\overline{u}\right)\left(v\right),
\end{gather*}
or a value of such a function. A \emph{(vector) dissociation} is a
function
\begin{gather*}
\mathrm{D}:V\times V\times A\rightarrow V,\\
\left(u,v,p\right)\mapsto\mathrm{D}_{u}v\left(p\right):=\mathrm{G}_{u}v\left(p\right)-v=p^{-1}\circ\left(p+\overline{u}\right)\left(v\right)-v,
\end{gather*}
or a value of such a function. Variants of these functions are 
\begin{gather*}
\vec{\mathrm{G}}:V\times V\times A\rightarrow\vec{V},\\
\left(u,v,p\right)\mapsto\vec{\mathrm{G}}_{u}v\left(p\right);=p\left(\mathrm{G}_{u}v\left(p\right)\right)=\left(p+\overline{u}\right)\left(v\right)
\end{gather*}
 and
\begin{gather*}
\vec{\mathrm{D}}:V\times V\times A\rightarrow\vec{V},\\
\left(u,v,p\right)\mapsto\vec{\mathrm{D}}_{u}v\left(p\right):=p\left(\mathrm{D}_{u}v\left(p\right)\right)=\left(p+\overline{u}\right)\left(v\right)-p\left(v\right).
\end{gather*}
\end{defn}

Remarkably, $\mathrm{D}_{u}v\left(p\right)$ is precisely the expression
(\ref{eq:conn-1}) that measures non-flatness of a pointwise affine
space. $\mathrm{D}_{u}v\left(p\right)$ and its infinitesimal counterpart
$\Delta_{u}v\left(p\right)$ feature prominently in the remainder
of this article. 
\begin{defn}
\label{def:42}Consider a pointwise affine system $\mathscr{A}\left(V,A,\boldsymbol{\mathfrak{a}}\right)$.
The function 
\[
\mathrm{C}\left(p\right):\left(u,v\right)\mapsto\mathrm{D}_{u}v\left(p\right)=p^{-1}\circ\left(p+\overline{u}\right)\left(v\right)-v
\]
is the \emph{discrete affine curvature} of $A$ at $p$, and a function
\[
\mathbf{C}:p\mapsto\mathrm{C}\left(p\right)
\]
is a \emph{discrete affine connection} on $A$.
\end{defn}

Intuitively, a deviation $\mathrm{G}$ in a curved space is, in general,
a rotation and rescaling of a vector bound to a fixed point, due to
a deformation of the space, while a dissociation $\mathrm{D}$ is
the discrepancy between the original vector and the rotated/rescaled
vector.

It follows directly from the definitions of $\mathrm{G}$ and $\mathrm{D}$
that $\mathrm{G}_{u}v\left(p\right)$, $\mathrm{D}_{u}v\left(p\right)$,
$\vec{\mathrm{G}}_{u}v\left(p\right)$ and $\vec{\mathrm{D}}{}_{u}v\left(p\right)$
are linear in $v$ for all $p\in A$ since $\mathfrak{a}\left(p\right)$
and $\mathfrak{a}\left(p\right)^{-1}$ are linear functions. Hence,
$\mathrm{D}_{u}0\left(p\right)=0$ and $\vec{\mathrm{D}}_{u}0\left(p\right)=\vec{0}$.
However, $\mathrm{G}_{u}v\left(p\right)$, $\mathrm{D}_{u}v\left(p\right)$,
$\vec{\mathrm{G}}_{u}v\left(p\right)$ and $\vec{\mathrm{D}}{}_{u}v\left(p\right)$
are not necessarily linear in $u$, although clearly $\mathrm{D}_{0}v\left(p\right)=0$
and $\vec{\mathrm{D}}_{0}v\left(p\right)=\vec{0}$.

Deviation and dissociation functions can be composed. Specifically.
\begin{gather}
\mathrm{G}_{u}\left(\mathrm{G}_{v}w\right)\left(p\right):=\mathrm{G}_{u}\left(\mathrm{G}_{v}w\left(p\right)\right)\left(p\right)=\mathrm{G}_{u}\left(p^{-1}\circ\left(p+\overline{v}\right)\left(w\right)\right)\left(p\right)\label{eq:G}\\
=p^{-1}\circ\left(p+u\right)\circ p^{-1}\circ\left(p+\overline{v}\right)\left(w\right),\nonumber 
\end{gather}
\begin{align*}
\mathrm{D}_{u}\left(\mathrm{D}_{v}w\right)\left(p\right): & =\mathrm{D}_{u}\left(\mathrm{D}_{v}w\left(p\right)\right)\left(p\right)=\mathrm{D}_{u}\left(p^{-1}\circ\left(p+\overline{v}\right)\left(w\right)-w\right)\left(p\right)\\
 & =p^{-1}\circ\left(p+\overline{u}\right)\left(p^{-1}\circ\left(p+\overline{v}\right)\left(w\right)-w\right)-\left(p^{-1}\circ\left(p+\overline{v}\right)\left(w\right)-w\right)\\
 & =p^{-1}\circ\left(p+\overline{u}\right)\circ p^{-1}\circ\left(p+\overline{v}\right)\left(w\right)-p^{-1}\circ\left(p+\overline{u}\right)\left(w\right)-p^{-1}\circ\left(p+\overline{v}\right)\left(w\right)+w
\end{align*}
and so on for more than three vectors. Note that hence
\begin{equation}
\mathrm{D}_{u}\left(\mathrm{D}_{v}w\right)\left(p\right)=\mathrm{G}_{u}\left(\mathrm{G}_{v}w\right)\left(p\right)-\mathrm{G}_{u}w\left(p\right)-\mathrm{G}_{v}w\left(p\right)+w.\label{eq:dg}
\end{equation}

$\vec{\mathrm{G}}{}_{u}\left(\mathrm{G}_{u}w\right)\left(p\right)$,
$\vec{\mathrm{D}}_{u}\left(\mathrm{D}_{v}w\right)\left(p\right)$
etc. can be defined analogously. $\mathrm{G}_{u}\left(\mathrm{G}_{v}w\right)\left(p\right)$,
$\mathrm{D}_{u}\left(\mathrm{D}_{v}w\right)\left(p\right)$, $\vec{\mathrm{G}}{}_{u}\left(\mathrm{G}_{v}w\right)\left(p\right)$
and $\vec{\mathrm{D}}_{u}\left(\mathrm{D}_{v}w\right)\left(p\right)$
are clearly linear in $w$ for all $u,v\in V$ and $p\in A$. It also
follows from $\mathrm{D}_{0}v\left(p\right)=\mathrm{D}_{u}0\left(p\right)=0$
that $\mathrm{D}_{0}\left(\mathrm{D}_{v}w\right)\left(p\right)=\mathrm{D}_{u}\left(\mathrm{D}_{0}w\right)\left(p\right)=0$
etc.
\begin{defn}
\label{def:43}A\emph{ (point) displacement} is a function
\begin{gather*}
\mathrm{M}:V\times V\times A\rightarrow A,\\
\left(u,v,p\right)\mapsto\mathrm{M}_{u}v\left(p\right):=\left(p+\overline{u}\right)+\vec{\mathrm{G}}_{u}v\left(p\right)=\left(p+\overline{u}\right)+\left(p+\overline{u}\right)\left(v\right)=\left(p+\overline{u}\right)+\overline{v}.
\end{gather*}
\end{defn}

A (once) iterated displacement is given by
\begin{align}
\mathrm{M}_{uv}w\left(p\right) & =\left(\left(p+\overline{u}\right)+\overline{v}\right)+\vec{\mathrm{G}}_{u}\left(\mathrm{G}_{v}w\left(p\right)\right)\left(p\right)\label{eq:m2}\\
 & =\left(\left(p+\bar{u}\right)+\overline{v}\right)+\left(p+\overline{u}\right)\circ p^{-1}\circ\left(p+\bar{v}\right)\left(w\right).\nonumber 
\end{align}

$\mathrm{M}_{u}v\left(p\right)$ corresponds to an \emph{interval
displacement} \emph{of $p\cdots p+\overline{v}$ along $p\cdots p+\overline{u}$}
\[
p\cdots p+\overline{v}\mapsto\left(p+\overline{u}\right)\cdots\left(p+\overline{u}\right)+\overline{v},
\]
 or a\emph{ parallel transport of a bound vector $v_{p}$ along $u_{p}$}
\begin{alignat*}{1}
v_{p}\mapsto p^{-1}\left(\left(\left(p+\overline{u}\right)+\overline{v}\right)\leftarrow\left(p+\overline{u}\right)\right)_{\left(p+\overline{u}\right)} & =p^{-1}\circ\left(p+\overline{u}\right)\left(v\right)_{\left(p+\overline{u}\right)}=\mathrm{G}_{u}v\left(p\right)_{\left(p+\overline{u}\right)}.
\end{alignat*}
Thus, the deviation $\mathrm{G}$ is the key to the dissociation $\mathrm{D}$,
the point displacement $\mathrm{M}$, and the classical notion of
parallel transport of vectors.

\subsection{\label{subsec:42}Infinitesimal dissociation}

In proper differential geometry, we have notions of differentiable
manifolds and curvature. We can similarly introduce notions of differentiability
and infinitesimal curvature in a pointwise affine space. For this
purpose, we define an operator $\Delta$ in terms of the dissociation
$\mathrm{D}$ by forming a natural limit.
\begin{defn}
\label{def:11}A \emph{differentiable} pointwise affine space is a
pointwise affine space $A$ such that the limit
\[
\Delta_{u}v\left(p\right)=\lim_{\tau\rightarrow0}\frac{\mathrm{D}{}_{\left(\tau u\right)}v\left(p\right)}{\tau}=\lim_{\tau\rightarrow0}\frac{p^{-1}\circ\left(p+\overline{\tau u}\right)\left(v\right)-v}{\tau}
\]
exists for all $p\in A$ and $u,v\in V$, and is linear in $u$ and
$v$. 

We call $\Delta_{u}v\left(p\right)$ the \emph{infinitesimal dissociation
}of $v$ with respect to $u$ at $p,$ or \emph{the pseudo-derivative}
of $v$ with respect to $u$ at $p$.
\end{defn}

It follows from Definition \ref{def:11} that if $A$ is a differentiable
pointwise affine space then 
\begin{equation}
\lim_{\tau\rightarrow0}p^{-1}\circ\left(p+\overline{\tau u}\right)\left(v\right)-v=0\label{eq:cont}
\end{equation}
 for all $u,v\in V$ and $p\in A$.

It is obvious how to extend Definition \ref{def:42} to the infinitesimal
case.
\begin{defn}
\label{def:45}Consider a pointwise affine system $\mathscr{A}\left(V,A,\boldsymbol{\mathfrak{a}}\right)$.
The function 
\[
C\left(p\right):\left(u,v\right)\mapsto\Delta{}_{u}v\left(p\right)
\]
 is the (infinitesimal)\emph{ affine curvature} of $A$ at $p$, and
a function 
\[
\boldsymbol{\mathit{C}}:p\mapsto C\left(p\right)
\]
 is an (infinitesimal)\emph{ affine connection} on $A$.
\end{defn}

\begin{prop}
\label{prop:21}If $\Delta_{u}v\left(p\right)$ is additive in $u$
then $\Delta_{u}v\left(p\right)$ is linear in $u$ and $v$.
\end{prop}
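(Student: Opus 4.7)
The plan is to establish linearity in $v$ and homogeneity in $u$ directly from the definition of $\Delta_u v(p)$ as a limit; combined with the hypothesized additivity in $u$, these two facts yield the full bilinearity asserted.

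First, I would verify linearity in $v$. Since $\mathfrak{a}(p)$ and $\mathfrak{a}(p+\overline u)$ are vector space isomorphisms, the map $v \mapsto \mathrm{D}_u v(p) = p^{-1}\circ(p+\overline u)(v) - v$ is $K$-linear for every fixed $u \in V$ and $p \in A$. The difference quotient $\tau^{-1}\mathrm{D}_{\tau u} v(p)$ is then also linear in $v$ for every $\tau\neq 0$, and this property passes to the limit $\tau\to 0$ because limits respect sums and scalar multiples whenever they exist, which they do by the existence clause of Definition \ref{def:11}.

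Next, I would derive homogeneity in $u$, i.e., $\Delta_{\lambda u}v(p) = \lambda\,\Delta_u v(p)$ for every $\lambda \in K$. For $\lambda = 0$ this is immediate from $\mathrm{D}_0 v(p) = 0$ (already noted after Definition \ref{def:42}, since $p+\overline 0 = p$). For $\lambda \neq 0$, I would substitute $\sigma = \tau\lambda$ in the defining limit:
\[
\Delta_{\lambda u} v(p) = \lim_{\tau\to 0}\frac{\mathrm{D}_{\tau\lambda u} v(p)}{\tau} = \lambda\lim_{\sigma\to 0}\frac{\mathrm{D}_{\sigma u}v(p)}{\sigma} = \lambda\,\Delta_u v(p).
\]
Together with the assumed additivity in $u$, this gives linearity in $u$.

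The only delicate point is the change of variable in the limit, which tacitly uses that $\tau$ ranges in a neighborhood of $0$ in $K$ and that multiplication by a nonzero $\lambda$ is a homeomorphism of such a neighborhood. This is standard in the intended setting (e.g.\ $K=\mathbb{R}$), so I do not expect it to be a real obstacle; once the correct substitution is identified the argument reduces essentially to a one-line calculation, and the proposition says nothing more than that the existence of $\Delta_u v(p)$ already forces both homogeneity in $u$ and linearity in $v$, so that additivity in $u$ is the only condition one has to impose by hand.
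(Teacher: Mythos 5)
Your proposal is correct and follows essentially the same route as the paper: homogeneity $\Delta_{cu}v(p)=c\,\Delta_{u}v(p)$ via the substitution $\tau(cu)=(c\tau)u$ in the limit, with the case $c=0$ handled separately, combined with the assumed additivity in $u$. You additionally spell out linearity in $v$ by passing the linearity of $\mathrm{D}_{u}v(p)$ in $v$ through the limit, which the paper leaves to its earlier remark that $\mathrm{D}_{u}v(p)$ is linear in $v$ because $\boldsymbol{\mathfrak{a}}(p)$ and $\boldsymbol{\mathfrak{a}}(p)^{-1}$ are linear.
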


\begin{proof}
It suffices to note that
\begin{align*}
\Delta_{cu}v\left(p\right) & =\lim_{\tau\rightarrow0}\frac{p^{-1}\circ\left(p+\overline{\tau\left(cu\right)}\right)\left(v\right)-v}{\tau}=c\lim_{\tau\rightarrow0}\frac{p^{-1}\circ\left(p+\overline{\left(c\tau\right)u}\right)\left(v\right)-v}{c\tau}=c\,\Delta_{u}v\left(p\right)
\end{align*}
for all $c\neq0$, and that $\Delta_{cu}v\left(p\right)=c\,\Delta_{u}v\left(p\right)=0$
if $c=0$.
\end{proof}
A twice differentiable pointwise affine space is a differentiable
pointwise affine space such that the limit 
\[
\mathrm{\Delta}_{u}\left(\mathrm{\Delta}_{v}w\right)\left(p\right)=\lim_{\tau\rightarrow0}\frac{\mathrm{\mathrm{D}}{}_{\tau u}\left(\Delta_{v}w\left(p\right)\right)\left(p\right)}{\tau}=\lim_{\tau\rightarrow0}\frac{\mathrm{\mathrm{D}}{}_{\tau u}\left(\lim_{\tau\rightarrow0}\frac{\mathrm{D}{}_{\tau v}w\left(p\right)}{\tau}\right)\left(p\right)}{\tau}
\]
\begin{comment}
$=\lim_{t\rightarrow0}\frac{\mathrm{\mathrm{D}}{}_{tu}\left(\mathrm{D}{}_{tv}w\left(p\right)\right)\left(p\right)}{t^{2}}???$ 
\end{comment}
exists and is linear in $u,v,w$, and so on for three or more times
differentiable pointwise affine spaces.%
\begin{comment}
Informally, we say that a pointwise affine system is ''sufficiently
differentiable'' if all limits appearing in calculations required
for the application considered exist and are linear in all variables.
\end{comment}

Note that differentiability of a pointwise affine space is an intrinsic
property of the space, independent of differentiability of functions.
Differentiation of functions will be considered in Section \ref{sec:5}.

\section{Skew-curvature}

Below, we will again consider a pointwise affine system $\mathscr{A}=\left(V,A,\boldsymbol{\mathfrak{a}}\right)$.
The functions $\mathrm{D}_{u}v$ and $\Delta{}_{u}v$ discussed above
account for the affine curvature of $A$. The ''tonal amount'' of
curvature can be subdivided into components in various ways. In this
section, we will consider two curvature components frequently encountered
in the literature, obtained by forming the difference $\mathrm{D}_{u}v-\mathrm{D}_{v}u$
(torsion) anf the difference $\mathrm{D}_{u}\left(\mathrm{D}_{v}w\right)-\mathrm{D}_{v}\left(\mathrm{D}_{u}w\right)$
(affine Riemann curvature), as well as their infinitesimal analogues.
A combination of torsion and affine Riemann curvature will also be
discussed.

\subsection{Torsion (first-order affine skew-curvature)}
\begin{defn}
\label{d17-1}The \emph{discrete torsion $\mathrm{T}_{uv}\left(p\right)$
at} $p$ \emph{for} $u,v$ is given by the function
\begin{gather*}
\mathrm{T}:V\times V\times A\rightarrow V,\\
\left(u,v,p\right)\mapsto\mathrm{T}_{uv}\left(p\right):=\mathrm{D}_{u}v\left(p\right)-\mathrm{D}_{v}u\left(p\right)=\left(p^{-1}\circ\left(p+\overline{u}\right)\left(v\right)-v\right)-\left(p^{-1}\circ\left(p+\overline{v}\right)\left(u\right)-u\right).
\end{gather*}
A variant of $\mathrm{T}$ is $\vec{\mathrm{T}}$, given by
\[
\vec{\mathrm{T}}_{uv}\left(p\right):=p\left(\mathrm{T}_{uv}\left(p\right)\right)=\vec{\mathrm{D}}_{u}v\left(p\right)-\vec{\mathrm{D}}_{v}u\left(p\right)=\left(\left(p+\overline{u}\right)\left(v\right)-p\left(v\right)\right)-\left(\left(p+\overline{v}\right)\left(u\right)-p\left(u\right)\right).
\]
\end{defn}

Clearly, $\mathrm{T}_{uv}\left(p\right)+\mathrm{T}_{vu}\left(p\right)=0$
and $\vec{\mathrm{T}}_{uv}\left(p\right)+\vec{\mathrm{T}}_{vu}\left(p\right)=\vec{0}$.
Furthermore, it follows from $\mathrm{D}_{0}u\left(p\right)=\mathrm{D}_{u}0\left(p\right)=0$
that $\mathrm{T}_{0v}\left(p\right)=\mathrm{T}_{u0}\left(p\right)=0$
and $\vec{\mathrm{T}}_{0v}\left(p\right)=\mathrm{\vec{\mathrm{T}}}_{u0}\left(p\right)=\vec{0}$
for all $u,v\in V$ and $p\in A$. 
\begin{prop}
\label{prop:15}$\vec{\mathrm{T}}_{uv}\left(p\right)=\left[\mathrm{M}_{u}v\left(p\right)\right]$,
where 
\[
\left[\mathrm{M}_{u}v\left(p\right)\right]=\mathrm{M}_{u}v\left(p\right)\leftarrow\mathrm{M}_{v}u\left(p\right)=\left(\left(p+\overline{u}\right)+\overline{v}\right)\leftarrow\left(\left(p+\overline{v}\right)+\overline{u}\right).
\]
\end{prop}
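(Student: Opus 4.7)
The plan is to compute the translation $\mathrm{M}_u v(p)\leftarrow \mathrm{M}_v u(p)$ directly in $\vec V$ by chaining Weyl's axiom (\ref{eq:23-2}) through appropriate intermediate points, and to recognize the resulting combination as $\vec{\mathrm{D}}_u v(p)-\vec{\mathrm{D}}_v u(p)$, which equals $\vec{\mathrm{T}}_{uv}(p)$ by Definition \ref{d17-1}.

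First I would unwind the displacements: by Definition \ref{def:43}, $\mathrm{M}_u v(p)=(p+\overline u)+\overline v$ and $\mathrm{M}_v u(p)=(p+\overline v)+\overline u$. Applying Weyl's axiom with $p$ as an intermediate point gives
\[
\mathrm{M}_u v(p)\leftarrow \mathrm{M}_v u(p)=\bigl(\mathrm{M}_u v(p)\leftarrow p\bigr)-\bigl(\mathrm{M}_v u(p)\leftarrow p\bigr),
\]
where I use that in the abelian group $\vec V$, $(p\leftarrow q)=-(q\leftarrow p)$, which itself is a direct consequence of (\ref{eq:23-1})--(\ref{eq:23-2}).

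Next I would expand each of the two translations from $p$ using Weyl's axiom a second time, routing through the intermediate points $p+\overline u$ and $p+\overline v$ respectively. By the definition (\ref{eq:pbar}) of the bar-notation together with (\ref{eq:23}), the translation from $p$ to $p+\overline u$ is $p(u)$, and the translation from $p+\overline u$ to $(p+\overline u)+\overline v$ is $(p+\overline u)(v)$. Hence
\[
\mathrm{M}_u v(p)\leftarrow p=(p+\overline u)(v)+p(u),\qquad \mathrm{M}_v u(p)\leftarrow p=(p+\overline v)(u)+p(v).
\]

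Substituting these into the previous expression, the terms $p(u)$ and $p(v)$ pair up with the other terms to give
\[
\mathrm{M}_u v(p)\leftarrow \mathrm{M}_v u(p)=\bigl((p+\overline u)(v)-p(v)\bigr)-\bigl((p+\overline v)(u)-p(u)\bigr),
\]
which is precisely $\vec{\mathrm{D}}_u v(p)-\vec{\mathrm{D}}_v u(p)=\vec{\mathrm{T}}_{uv}(p)$ by Definition \ref{d17-1}. The argument is essentially bookkeeping, so I do not expect any serious obstacle; the one point requiring care is the distinction between $V$ and $\vec V$, specifically ensuring that every $\leftarrow$ is consistently read as an element of $\vec V$ so that subtraction and additivity of Weyl's axiom can be invoked without ambiguity.
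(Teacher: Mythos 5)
Your proposal is correct and is essentially the paper's own proof read in the opposite direction: the paper starts from $\vec{\mathrm{T}}_{uv}\left(p\right)$, regroups it as $\left(p\left(u\right)+\left(p+\overline{u}\right)\left(v\right)\right)-\left(p\left(v\right)+\left(p+\overline{v}\right)\left(u\right)\right)$, and recognizes this difference of translations from $p$ as the interval $\left(\left(p+\overline{u}\right)+\overline{v}\right)\leftarrow\left(\left(p+\overline{v}\right)+\overline{u}\right)$, which is exactly your Weyl's-axiom decomposition through the intermediate points $p$, $p+\overline{u}$ and $p+\overline{v}$. No gap; the two arguments use the same identities in the same places.
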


\begin{proof}
We have
\begin{align*}
\vec{\mathrm{T}}{}_{uv}\left(p\right) & =\left(\left(p+\overline{u}\right)\left(v\right)-p\left(v\right)\right)-\left(\left(p+\overline{v}\right)\left(u\right)-p\left(u\right)\right)\\
 & =\left(p\left(u\right)+\left(p+\overline{u}\right)\left(v\right)\right)-\left(p\left(v\right)+\left(p+\overline{v}\right)\left(u\right)\right)\\
 & =\left(p+\left(p\left(u\right)+\left(p+\overline{u}\right)\left(v\right)\right)\right)\leftarrow\left(p+\left(p\left(v\right)+\left(p+\overline{v}\right)\left(u\right)\right)\right)\\
 & =\left(\left(p+\overline{u}\right)+\overline{v}\right)\leftarrow\left(\left(p+\overline{v}\right)+\overline{u}\right).\qedhere
\end{align*}
As shown in Figure 5.1, Proposition \ref{prop:15} provides a simple
geometric interpretation of torsion, motivating Definition \ref{d17-1}.
\end{proof}
\begin{figure}[h]
\centering
\includegraphics[scale=0.15]{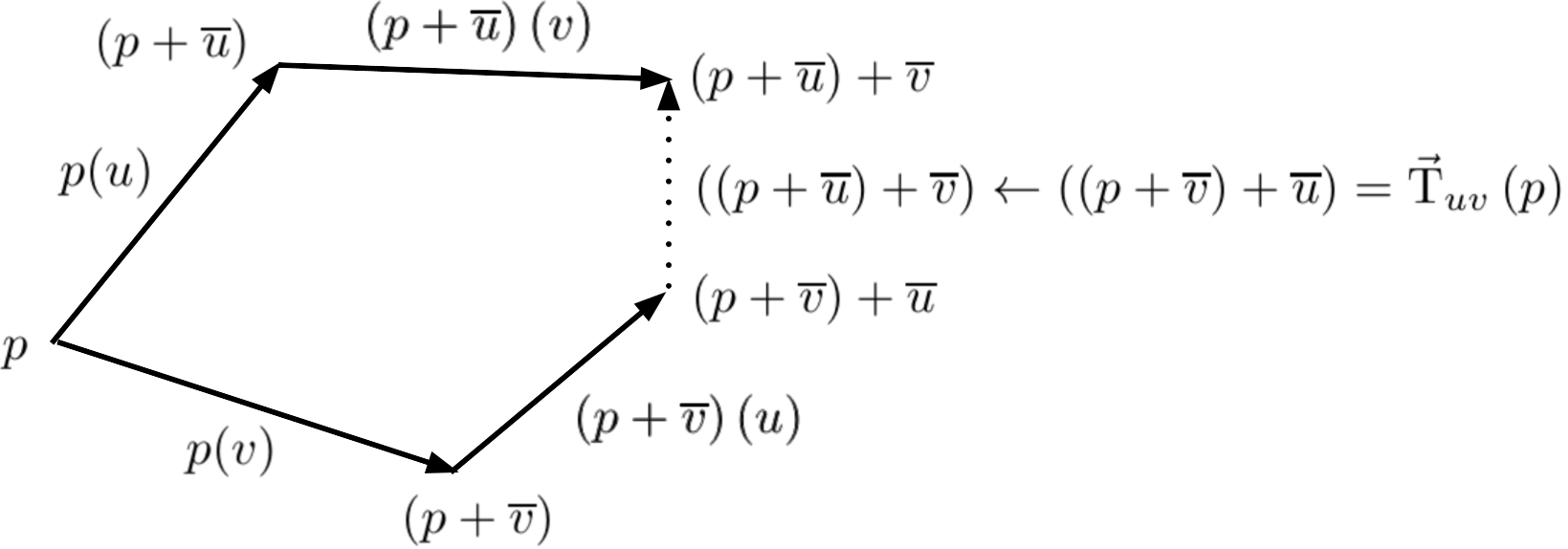}

\caption{Geometric interpretation of torsion. Points and translations in a
pointwise affine space.}

\end{figure}

Let us also define the infinitesimal torsion corresponding to $\mathrm{T}_{uv}\left(p\right)$.
\begin{defn}
\label{def:53}The\emph{ (infinitesimal) torsion $T_{uv}\left(p\right)$
at} $p$ \emph{for} $u,v$ is given by the function
\begin{gather*}
T:\mathcal{A}\times V\times V\rightarrow V,\qquad\left(u,v,p\right)\mapsto T_{uv}\left(p\right):=\Delta_{u}v\left(p\right)-\Delta_{v}u\left(p\right).
\end{gather*}
$T_{uv}\left(p\right)$ inherits anti-symmetry from $\mathrm{T}_{uv}\left(p\right)$
and is clearly linear in $u$ and $v$ as is $\Delta_{u}v$. In other
words, $\mathrm{T}_{uv}$ is a bilinear function $V\times V\rightarrow V$,
or a $\left(2,1\right)$-tensor.
\end{defn}

\subsection{Affine Riemann curvature (second-order affine skew-curvature)}
\begin{defn}
\label{s23-1} The \emph{discrete affine Riemann curvature $\mathrm{R}_{uvw}\left(p\right)$
at} $p$ \emph{for} $u,v.w$ is given by the function
\begin{gather*}
\mathrm{R}:V\times V\times V\times A\rightarrow V,\\
\left(u,v,w,p\right)\mapsto\mathrm{R}_{uvw}\left(p\right):=\mathrm{D}_{u}\left(\mathrm{D}_{v}w\right)\left(p\right)-\mathrm{D}_{v}\left(\mathrm{D}_{u}w\right)\left(p\right)
\end{gather*}
\begin{comment}
\[
\mathfrak{R}\left(p,u,u\right)\left(w\right)=\left[\mathrm{D}^{2}\left(p,u,u\right)\left(w\right)\right]
\]
 where $\left[\mathrm{D}^{2}\left(p,u,u\right)\left(w\right)\right]=\mathrm{D}^{2}\left(p,u,u\right)\left(w\right)-\mathrm{D}^{2}\left(p,u,u\right)\left(w\right)$.
\end{comment}
A variant of $\mathrm{R}$ is $\vec{\mathrm{R}}$, given by
\[
\vec{\mathrm{R}}{}_{uvw}\left(p\right)=p\left(\mathrm{R}_{uvw}\left(p\right)\right)=\vec{\mathrm{D}}{}_{u}\left(\mathrm{D}_{v}w\right)\left(p\right)-\vec{\mathrm{D}}_{v}\left(\mathrm{D}_{u}w\right)\left(p\right).
\]
 
\end{defn}

Clearly, $\mathrm{R}_{uvw}\left(p\right)+\mathrm{R}_{vuw}\left(p\right)=0$
for all $u,v,w\in V$, and similarly for $\vec{\mathrm{R}}$. It follows
from $\mathrm{D}_{0}u\left(p\right)=\mathrm{D}_{u}0\left(p\right)=0$
that $\mathrm{R}{}_{0vw}\left(p\right)=\mathrm{R}{}_{u0w}\left(p\right)=\mathrm{R}{}_{uv0}\left(p\right)=0$
for all $u,v,w\in V$ and $p\in A$, and there are analogous identities
for $\vec{\mathrm{R}}$.
\begin{lem}
\label{p24-1}$\mathrm{R}_{uvw}\left(p\right)=\mathrm{G}_{u}\left(\mathrm{G}_{v}w\right)\left(p\right)-\mathrm{G}_{v}\left(\mathrm{G}_{u}w\right)\left(p\right).$
\end{lem}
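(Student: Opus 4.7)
The plan is to reduce the claim to the identity already established in equation (\ref{eq:dg}), namely
\[
\mathrm{D}_u(\mathrm{D}_v w)(p) = \mathrm{G}_u(\mathrm{G}_v w)(p) - \mathrm{G}_u w(p) - \mathrm{G}_v w(p) + w,
\]
which expresses a second-order dissociation purely in terms of second- and first-order deviations plus the base vector $w$. Since the definition of $\mathrm{R}_{uvw}(p)$ is the antisymmetrization of $\mathrm{D}_u(\mathrm{D}_v w)(p)$ in $(u,v)$, the strategy is simply to write out both terms via (\ref{eq:dg}) and observe the cancellations.

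Concretely, I would first apply (\ref{eq:dg}) to $\mathrm{D}_u(\mathrm{D}_v w)(p)$, then swap $u$ and $v$ to obtain the analogous expansion
\[
\mathrm{D}_v(\mathrm{D}_u w)(p) = \mathrm{G}_v(\mathrm{G}_u w)(p) - \mathrm{G}_v w(p) - \mathrm{G}_u w(p) + w.
\]
Subtracting, the four terms $-\mathrm{G}_u w(p) - \mathrm{G}_v w(p) + w$ appear with the same sign in both expansions and therefore cancel, leaving precisely
\[
\mathrm{R}_{uvw}(p) = \mathrm{G}_u(\mathrm{G}_v w)(p) - \mathrm{G}_v(\mathrm{G}_u w)(p),
\]
as required.

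There is essentially no obstacle here: the proof is a one-line symbolic cancellation once (\ref{eq:dg}) is in hand. The only thing worth double-checking is that (\ref{eq:dg}) is stated and used in the exact form above (it is, as derived just before Definition \ref{def:43}), so that the four symmetric ``lower-order'' terms genuinely cancel upon antisymmetrization. No assumption about linearity in $u$ or $v$, and no use of flatness, differentiability, or Proposition \ref{p8} is needed; the identity is purely algebraic and holds pointwise for every $p\in A$ and all $u,v,w\in V$.
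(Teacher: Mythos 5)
Your proof is correct and follows essentially the same route as the paper's own argument: both expand $\mathrm{D}_{u}\left(\mathrm{D}_{v}w\right)\left(p\right)$ and $\mathrm{D}_{v}\left(\mathrm{D}_{u}w\right)\left(p\right)$ using (\ref{eq:dg}) and cancel the symmetric lower-order terms $-\mathrm{G}_{u}w\left(p\right)-\mathrm{G}_{v}w\left(p\right)+w$. Nothing further is needed.
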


\begin{proof}
Using (\ref{eq:dg}), we obtain
\begin{align*}
\mathrm{R}_{uvw}\left(p\right) & =\mathrm{D}_{u}\left(\mathrm{D}_{v}w\right)\left(p\right)-\mathrm{D}_{v}\left(\mathrm{D}_{u}w\right)\left(p\right)\\
 & =\mathrm{G}_{u}\left(\mathrm{G}_{v}w\right)\left(p\right)-\mathrm{G}_{u}w\left(p\right)-\mathrm{G}_{v}w\left(p\right)+w\\
 & -\left(\mathrm{G}_{v}\left(\mathrm{G}_{u}w\right)\left(p\right)-\mathrm{G}_{v}w\left(p\right)-\mathrm{G}_{u}w\left(p\right)+w\right)\\
 & =\mathrm{G}_{u}\left(\mathrm{G}_{v}w\right)\left(p\right)-\mathrm{G}_{v}\left(\mathrm{G}_{u}w\right)\left(p\right).\qedhere
\end{align*}
\end{proof}
\begin{lem}
\label{lem:l18}Let $A$ be a point space. Then 
\begin{equation}
\left(p\leftarrow q\right)-\left(s\leftarrow r\right)=\left(p\leftarrow s\right)-\left(q\leftarrow r\right)\label{eq:perm}
\end{equation}
 for any $p,q,r,s\in A$.
\end{lem}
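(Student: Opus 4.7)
The plan is to reduce the identity to two applications of Weyl's axiom (equation \ref{eq:23-2}) together with the vector-space structure on $\vec{V}$. The key observation is that both sides of \eqref{eq:perm} should simplify to the same ``telescoping'' expression once an intermediate point is inserted inside each of the two translations on the left.

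First I would introduce the point $q$ (or equivalently $s$) as an intermediate stop. By Weyl's axiom applied once,
\[
p \leftarrow q = (p \leftarrow s) + (s \leftarrow q),
\]
and applied a second time,
\[
s \leftarrow r = (s \leftarrow q) + (q \leftarrow r).
\]
Subtracting these two identities inside the vector space $\vec{V}$, the common term $s \leftarrow q$ cancels, leaving
\[
(p \leftarrow q) - (s \leftarrow r) = (p \leftarrow s) - (q \leftarrow r),
\]
which is exactly \eqref{eq:perm}.

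There is no real obstacle here: the only ingredients are that $\vec{V}$ is a vector space (so subtraction makes sense and the cancellation is legitimate) and that Weyl's axiom lets us insert any intermediate point into a translation $p \leftarrow q$. The choice of intermediate point is what makes the two decompositions line up so that the cross-term cancels; choosing $r$ or $p$ as the intermediate point instead would yield a valid but slightly different bookkeeping leading to the same conclusion. The whole argument is a two-line calculation and I would present it in the paper essentially as written above.
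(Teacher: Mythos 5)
Your proof is correct and is essentially the same as the paper's: both amount to two applications of Weyl's axiom (\ref{eq:23-2}) followed by cancellation in the vector space $\vec{V}$. The paper packages the computation as the vanishing of the four-point cycle $\left(p\leftarrow q\right)+\left(q\leftarrow r\right)+\left(r\leftarrow s\right)+\left(s\leftarrow p\right)=\vec{0}$, while you telescope each left-hand translation through the intermediate point $s$ (respectively $q$), but the two bookkeepings are interchangeable.
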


\begin{proof}
According to (\ref{eq:23-1}), $\left(p\leftarrow q\right)+\left(q\leftarrow r\right)=\left(p\leftarrow r\right)$,
so
\[
\vec{0}=\left(p\leftarrow p\right)=\left(p\leftarrow q\right)+\left(q\leftarrow r\right)+\left(r\leftarrow s\right)+\left(s\leftarrow p\right)=\left(p\leftarrow q\right)+\left(q\leftarrow r\right)-\left(s\leftarrow r\right)-\left(p\leftarrow s\right).
\]
Rearranging terms, we obtain (\ref{eq:perm}).
\end{proof}
\begin{prop}
\label{p25-1}$\vec{\mathrm{R}}{}_{uvw}\left(p\right)=\left[\mathrm{M}_{uv}w\left(p\right)\right]-\left[\mathrm{M}_{u}v\left(p\right)\right]$,
where 
\[
\left[\mathrm{M}_{uv}w\left(p\right)\right]=\mathrm{M}_{uv}w\left(p\right)\leftarrow\mathrm{M}_{vu}w\left(p\right)\quad\mathit{and}\quad\left[\mathrm{M}_{u}v\left(p\right)\right]=\mathrm{M}_{u}v\left(p\right)\leftarrow\mathrm{M}_{v}u\left(p\right).
\]
\end{prop}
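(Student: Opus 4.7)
The plan is to compute $[\mathrm{M}_{uv}w(p)] = \mathrm{M}_{uv}w(p)\leftarrow\mathrm{M}_{vu}w(p)$ directly from the explicit form of the iterated displacement given in (\ref{eq:m2}). Writing
\[
\mathrm{M}_{uv}w(p)=\bigl((p+\overline{u})+\overline{v}\bigr)+\vec{\mathrm{G}}_{u}(\mathrm{G}_{v}w(p))(p),
\]
and the analogous expression for $\mathrm{M}_{vu}w(p)$ with $u$ and $v$ interchanged, presents each iterated displacement as a base point in $A$ plus a translation in $\vec{V}$, reducing the problem to computing the difference of two ``point $+$ translation'' expressions.

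First I would record a simple auxiliary identity, an immediate consequence of Weyl's axiom (\ref{eq:23-2}): for points $P,Q\in A$ and translations $\vec{a},\vec{b}\in\vec{V}$,
\[
(P+\vec{a})\leftarrow(Q+\vec{b})=(P\leftarrow Q)+\vec{a}-\vec{b}.
\]
This may alternatively be read off directly from Lemma \ref{lem:l18} applied to the four points $P+\vec{a}$, $P$, $Q+\vec{b}$, $Q$.

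Applying this identity with $P=(p+\overline{u})+\overline{v}$, $Q=(p+\overline{v})+\overline{u}$, $\vec{a}=\vec{\mathrm{G}}_{u}(\mathrm{G}_{v}w(p))(p)$ and $\vec{b}=\vec{\mathrm{G}}_{v}(\mathrm{G}_{u}w(p))(p)$, and recognising the $P\leftarrow Q$ piece as $[\mathrm{M}_{u}v(p)]$ by its definition in Proposition \ref{prop:15}, yields
\[
[\mathrm{M}_{uv}w(p)]=[\mathrm{M}_{u}v(p)]+\vec{\mathrm{G}}_{u}(\mathrm{G}_{v}w(p))(p)-\vec{\mathrm{G}}_{v}(\mathrm{G}_{u}w(p))(p).
\]

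To close the argument, I would apply the linear map $\boldsymbol{\mathfrak{a}}(p)$ to both sides of Lemma \ref{p24-1}, obtaining $\vec{\mathrm{R}}_{uvw}(p)=\vec{\mathrm{G}}_{u}(\mathrm{G}_{v}w(p))(p)-\vec{\mathrm{G}}_{v}(\mathrm{G}_{u}w(p))(p)$, consistently with the $\vec{V}$-valued conventions mentioned after (\ref{eq:G}). Rearranging then gives the desired identity. The only real obstacle is bookkeeping: because the two iterated displacements differ in both their base points and their decorating translations, one has to invoke the auxiliary identity in exactly the right way so that the torsion-like piece $[\mathrm{M}_{u}v(p)]$ peels off cleanly, leaving precisely the second-order skew term $\vec{\mathrm{R}}_{uvw}(p)$.
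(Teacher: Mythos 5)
Your proposal is correct and takes essentially the same route as the paper: your auxiliary identity $\left(P+\vec{a}\right)\leftarrow\left(Q+\vec{b}\right)=\left(P\leftarrow Q\right)+\vec{a}-\vec{b}$ is just Lemma \ref{lem:l18} applied to the same four displaced points the paper rearranges, and both arguments close by invoking Lemma \ref{p24-1}. The only difference is bookkeeping order (you peel the translations off $[\mathrm{M}_{uv}w\left(p\right)]$ first, the paper exchanges the middle terms of the double difference first), which does not change the substance.
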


\begin{proof}
Using Definition \ref{def:43}, the formula (\ref{eq:m2}) and Lemma
\ref{lem:l18}, we obtain 
\begin{align*}
\left[\mathrm{M}_{uv}w\left(p\right)\right]-\left[\mathrm{M}_{u}v\left(p\right)\right] & =\left(\mathrm{M}_{uv}w\left(p\right)\leftarrow\mathrm{M}_{vu}w\left(p\right)\right)-\left(\mathrm{M}_{u}v\left(p\right)\leftarrow\mathrm{M}_{v}u\left(p\right)\right)\\
 & =\left(\mathrm{M}_{uv}w\left(p\right)\leftarrow\mathrm{M}_{u}v\left(p\right)\right)-\left(\mathrm{M}_{vu}w\left(p\right)\leftarrow\mathrm{M}_{v}u\left(p\right)\right)\\
 & =\left(\left(\left(\left(p+\bar{u}\right)+\overline{v}\right)+\vec{\mathrm{G}}_{u}\left(\mathrm{G}_{v}w\left(p\right)\right)\left(p\right)\right)\leftarrow\left(\left(p+\bar{u}\right)+\overline{v}\right)\right)\\
 & -\left(\left(\left(\left(p+\bar{v}\right)+\overline{u}\right)+\vec{\mathrm{G}}_{v}\left(\mathrm{G}_{u}w\left(p\right)\right)\left(p\right)\right)\leftarrow\left(\left(p+\bar{v}\right)+\overline{u}\right)\right)\\
 & =\vec{\mathrm{G}}_{u}\left(\mathrm{G}_{v}w\left(p\right)\right)\left(p\right)-\vec{\mathrm{G}}_{v}\left(\mathrm{G}_{u}w\left(p\right)\right)\left(p\right)=p\left(\mathrm{G}_{u}\left(\mathrm{G}_{v}w\right)\left(p\right)-\mathrm{G}_{v}\left(\mathrm{G}_{u}w\right)\left(p\right)\right),
\end{align*}
so the assertion follows from Lemma \ref{p24-1}.
\end{proof}
\begin{figure}[h]
\includegraphics[scale=0.15]{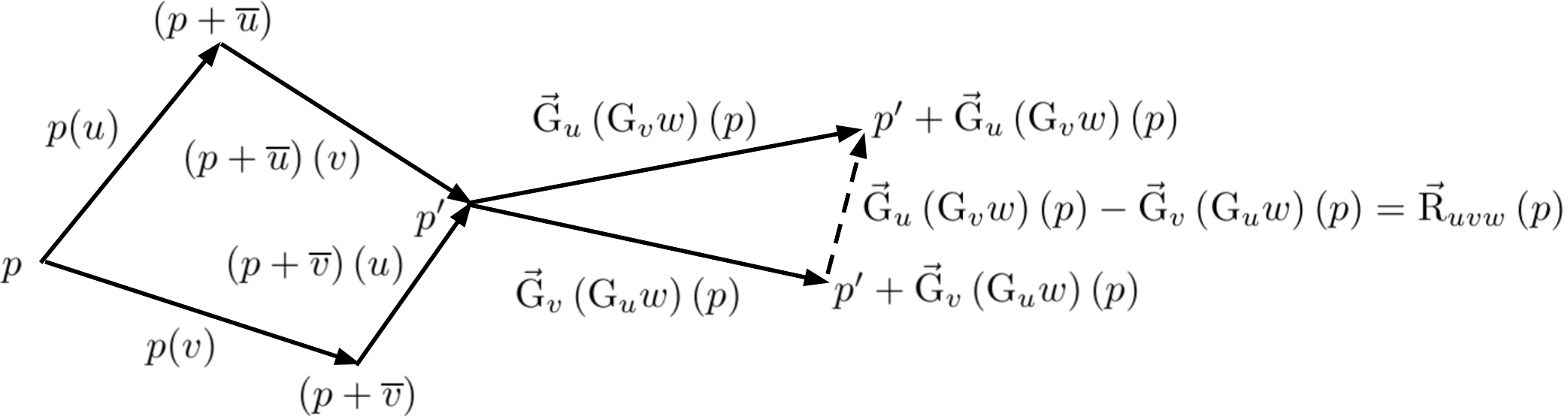}

\caption{Geometric interpretation of affine Riemann curvature in a torsion-free
context. Points and translations in a pointwise affine space. Recall
(eq. \ref{eq:G}) that $\vec{\mathrm{G}}_{u}\left(\mathrm{G}_{v}w\right)\left(p\right)=\left(p+\overline{u}\right)\circ p^{-1}\circ\left(p+\bar{v}\right)\left(w\right)$
etc.}
\end{figure}

As evident from Figure 5.2, Proposition \ref{p25-1} provides a geometric
interpretation of affine Riemann curvature in the special case when
$\mathrm{T}_{uv}\left(p\right)=0$ so that $\left[\mathrm{M}_{u}v\left(p\right)\right]=\vec{0}$.
This observation helps to motivate Definition \ref{s23-1}.

Finally, we define an infinitesimal affine Riemann curvature. Note
that this notion is more general than the usual metric Riemann curvature,
sometimes denoted $\mathring{R}_{uvw}\left(p\right)$, which is required
to be compatible with a given metric $g_{uv}$.
\begin{defn}
The \emph{(infinitesimal) affine Riemann curvature $R_{uvw}\left(p\right)$
at} $p$ \emph{for} $u,v,w$ is given by the function
\begin{gather*}
R:V\times V\times V\times A\rightarrow V,\qquad\left(u,v,w,p\right)\mapsto R_{uvw}\left(p\right):=\mathrm{\Delta}_{u}\left(\mathrm{\Delta}_{u}w\right)\left(p\right)-\mathrm{\Delta}_{u}\left(\mathrm{\Delta}_{u}w\right)\left(p\right).
\end{gather*}
\end{defn}

$R_{uvw}\left(p\right)$ inherits anti-symmetry with respect to $u,v$
from $\mathrm{R}{}_{uvw}\left(p\right)$ and is clearly linear in
$u,v,w$. In other words, $\mathrm{R}{}_{uvw}$ is a trilinear function
$V\times V\times V\rightarrow V$, or a $\left(3,1\right)$-tensor.

\subsection{Cumulative second-order affine skew-curvature}

We can combine the two types of curvature considered above.
\begin{defn}
\label{def:d20}The \emph{discrete affine cumulative second-order
curvature $\mathrm{C}{}_{uvw}\left(p\right)$ at} $p$ \emph{for}
$u,v.w$ is given by the function
\begin{gather*}
\mathrm{C}:V\times V\times V\times A\rightarrow V,\\
\left(u,v,w,p\right)\mapsto\mathrm{C}_{uvw}\left(p\right):=\mathrm{T}_{uv}\left(p\right)+\mathrm{R}_{uvw}\left(p\right).
\end{gather*}
\end{defn}

A variant of $\mathrm{C}$ is $\vec{\mathrm{C}}$, given by
\[
\vec{\mathrm{C}}_{uv}w\left(p\right):=p\left(\mathrm{C}_{uvw}\left(p\right)\right)=\vec{\mathrm{T}}{}_{u}u\left(p\right)+\vec{\mathrm{R}}{}_{uvw}\left(p\right).
\]

\begin{prop}
\label{prop:Cint}$\vec{\mathrm{C}}_{uvw}\left(p\right)=\left[\mathrm{M}_{uv}w\left(p\right)\right]$.
\end{prop}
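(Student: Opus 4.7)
The plan is to derive this identity by a direct algebraic combination of the two earlier geometric interpretations of torsion and Riemann curvature, which have already been established as Proposition~\ref{prop:15} and Proposition~\ref{p25-1}. No fresh calculation with the affine action or the displacement formula should be required; the work has effectively already been done in those two results.

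First I would unfold the definition of $\vec{\mathrm{C}}_{uvw}(p)$ given just before the proposition, namely
\[
\vec{\mathrm{C}}_{uvw}(p) = \vec{\mathrm{T}}_{uv}(p) + \vec{\mathrm{R}}_{uvw}(p).
\]
Then I would substitute the two known geometric expressions: by Proposition~\ref{prop:15}, $\vec{\mathrm{T}}_{uv}(p) = [\mathrm{M}_{u}v(p)]$, and by Proposition~\ref{p25-1}, $\vec{\mathrm{R}}_{uvw}(p) = [\mathrm{M}_{uv}w(p)] - [\mathrm{M}_{u}v(p)]$. Adding these, the $[\mathrm{M}_{u}v(p)]$ terms cancel, leaving $[\mathrm{M}_{uv}w(p)]$, which is exactly the claimed right-hand side.

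There is essentially no obstacle here; the cancellation is immediate once the two earlier propositions are invoked. The only subtle point worth flagging in the write-up is a bookkeeping check that both $[\mathrm{M}_{u}v(p)]$ terms really denote the same translation in $\vec{V}$ (so that subtraction/addition in the group of translations is unambiguous), which follows directly from the definition $[\mathrm{M}_{u}v(p)] = \mathrm{M}_{u}v(p) \leftarrow \mathrm{M}_{v}u(p)$ being the same expression in both propositions. Hence the proof reduces to a single displayed chain of equalities and can be kept to two or three lines.
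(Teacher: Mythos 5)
Your argument is correct and is exactly what the paper intends: its own proof of this proposition reads only ``Immediate from Propositions \ref{prop:15} and \ref{p25-1},'' and the substitution of $\vec{\mathrm{T}}_{uv}\left(p\right)=\left[\mathrm{M}_{u}v\left(p\right)\right]$ and $\vec{\mathrm{R}}_{uvw}\left(p\right)=\left[\mathrm{M}_{uv}w\left(p\right)\right]-\left[\mathrm{M}_{u}v\left(p\right)\right]$ into $\vec{\mathrm{C}}_{uvw}\left(p\right)=\vec{\mathrm{T}}_{uv}\left(p\right)+\vec{\mathrm{R}}_{uvw}\left(p\right)$ followed by cancellation is precisely that one-line argument, spelled out. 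Nothing further is needed.
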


\begin{proof}
Immediate from Propositions \ref{prop:15} and \ref{p25-1}.
\end{proof}
Clearly, $\mathrm{C}{}_{uvw}\left(p\right)+\mathrm{C}{}_{vuw}\left(p\right)=0$
for all $u,v,w\in V$, and similarly for $\vec{\mathrm{C}}$. It follows
from $\mathrm{D}_{0}u\left(p\right)=\mathrm{D}_{u}0\left(p\right)=0$
that $\mathrm{C}{}_{0vw}\left(p\right)=\mathrm{C}{}_{u0w}\left(p\right)=\mathrm{C}{}_{uv0}\left(p\right)=0$
for all $u,v,w\in V$ and $p\in\mathcal{A}$, and similarly for $\vec{\mathrm{C}}$.

In view of Proposition \ref{prop:Cint}, Figure 5.3 gives a geometric
interpretation of the relation between $\mathrm{T}_{uv}$, $\mathrm{R}_{uvw}$
and $\mathrm{C}_{uvw}$ or their infinitesimal analogues.

\begin{figure}[h]
\includegraphics[scale=0.15]{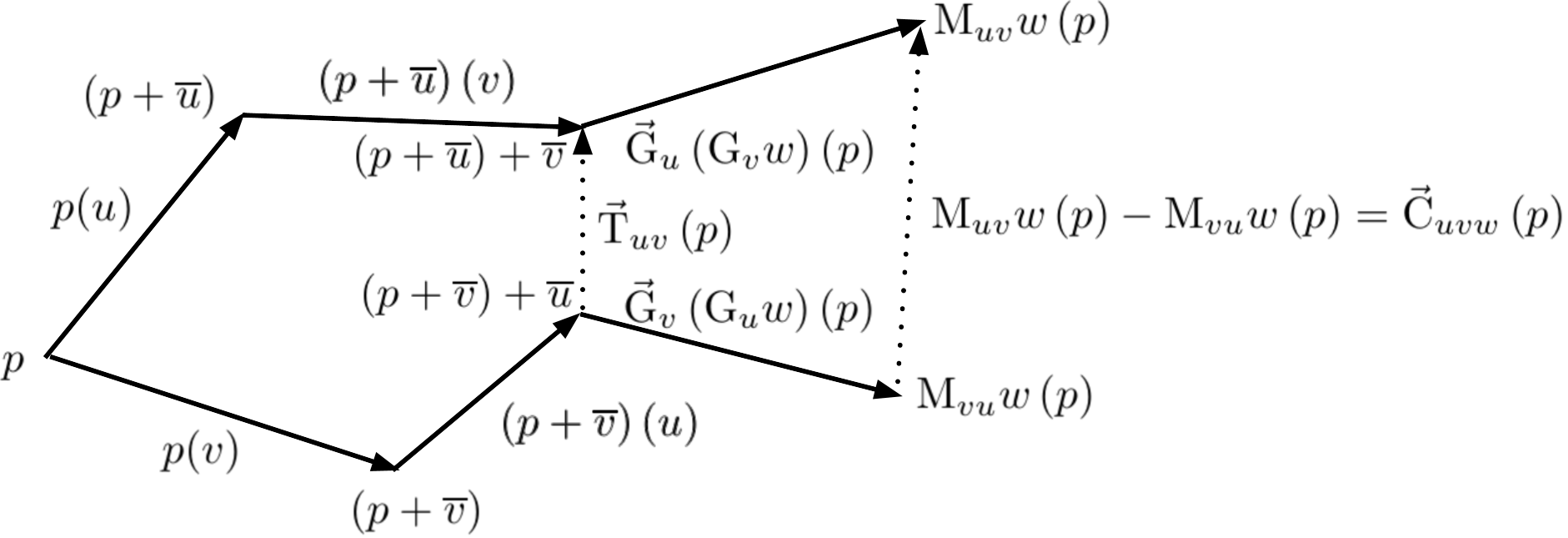}

\caption{Geometric interpretation of second order skew-curvature as sum of
torsion and affine Riemann curvature. Points and translations in a
pointwise affine space. Recall that $\vec{\mathrm{G}}_{u}\left(\mathrm{G}_{v}w\right)\left(p\right)=\left(p+\overline{u}\right)\circ p^{-1}\circ\left(p+\bar{v}\right)\left(w\right)$
and $\mathrm{M}_{u}v\left(p\right)=\left(\left(p+\overline{u}\right)+\overline{v}\right)+\vec{\mathrm{G}}_{u}\left(\mathrm{G}_{v}w\right)\left(p\right)$
etc.}
 
\end{figure}
By Figure 5.3, $\vec{\mathrm{T}}_{uv}\left(p\right)+\mathrm{\vec{G}}_{u}\left(\mathrm{G}_{v}w\right)\left(p\right)-\mathrm{\vec{C}}_{uvw}\left(p\right)-\mathrm{\vec{G}}_{v}\left(\mathrm{G}_{u}w\right)\left(p\right)=\vec{0}$,
so $\mathrm{\vec{C}}_{uvw}\left(p\right)-\vec{\mathrm{T}}_{uv}\left(p\right)=\mathrm{\vec{G}}_{u}\left(\mathrm{G}_{v}w\right)\left(p\right)-\mathrm{\vec{G}}_{v}\left(\mathrm{G}_{u}w\right)\left(p\right)$
and thus
\[
\mathrm{R}_{uvw}\left(p\right)=\mathrm{D}_{u}\left(\mathrm{D}_{v}w\right)\left(p\right)-\mathrm{D}_{v}\left(\mathrm{D}_{u}w\right)\left(p\right)=\mathrm{G}_{u}\left(\mathrm{G}_{v}w\right)\left(p\right)-\mathrm{G}_{v}\left(\mathrm{G}_{u}w\right)\left(p\right)=\mathrm{C}_{uvw}\left(p\right)-\mathrm{T}_{uv}\left(p\right).
\]

\begin{defn}
The \emph{(infinitesimal) cumulative second-order skew-curvature $C_{uvw}\left(p\right)$
at} $p$ \emph{for} $u,v.w$ is given by the function
\[
C:V\times V\times V\times A\rightarrow V,\qquad\left(u,v,w,p\right)\mapsto C{}_{uvw}\left(p\right):=T_{uv}\left(p\right)+R_{uvw}\left(p\right).
\]
 $C_{uvw}\left(p\right)$ inherits anti-symmetry with respect to $u,v$
from $\mathrm{C}{}_{uvw}\left(p\right)$ and is clearly linear in
$u,v,w$, and hence a $\left(3,1\right)$-tensor.%
\begin{comment}
To summarize, $T_{uv}$ is the first-order skew-curvature, $R_{uvw}$
is the second-order skew-curvature, and $C_{uvw}$ is the second-order
cumulative skew-derivative, comprising both the first-order and second-order
skew-curvatures.
\end{comment}
\end{defn}

\section{\label{sec:5-1}Fields}

We have already encountered an example of a field, namely an affine
action field. In this article, an \emph{$S$-field on $A$} is simply
a function $f:A\rightarrow S$, where $A$ is a point space and $S$
is any set. We denote the set of all fields of the form $A\rightarrow S$
by $A^{S}$. 

If $S$ is a vector space $V$ over $K$ then $A^{V}$ is also a vector
space over $K$ with operations defined by
\[
\left(\varPhi+\varPsi\right)\left(p\right)=\varPhi\left(p\right)+\varPsi\left(p\right),\qquad\left(\lambda\varPhi\right)\left(p\right)=\lambda\left(\varPhi\left(p\right)\right).
\]
More generally, a set $A^{V}$ of functions closed under the displayed
operations is also a vector space. 

In the expression $\Delta_{u}v\left(p\right)$, $u$ and $v$ are
vectors in a given vector space $V$. We can replace these vectors
by vector fields $\boldsymbol{u},\boldsymbol{v}\in A^{V}$ by substituting
$\boldsymbol{u}\left(p\right)$ for $u$ and $\boldsymbol{v}\left(p\right)$
for $v$. Specifically, we set
\begin{gather*}
\Delta_{\boldsymbol{u}}v\left(p\right)=\left(\Delta_{\boldsymbol{u}\left(p\right)}v\right)\left(p\right)=\lim_{\tau\rightarrow0}\frac{p^{-1}\circ\left(p+\overline{\tau\boldsymbol{u}\left(p\right)}\right)\left(v\right)-v}{\tau},\\
\Delta_{u}\boldsymbol{v}\left(p\right)=\left(\Delta_{u}\boldsymbol{v}\left(p\right)\right)\left(p\right)=\lim_{\tau\rightarrow0}\frac{p^{-1}\circ\left(p+\overline{\tau u}\right)\left(\boldsymbol{v}\left(p\right)\right)-\boldsymbol{v}\left(p\right)}{\tau},\\
\Delta_{\boldsymbol{u}}\boldsymbol{v}\left(p\right)=\left(\Delta_{\boldsymbol{u}\left(p\right)}\boldsymbol{v}\left(p\right)\right)\left(p\right)=\lim_{\tau\rightarrow0}\frac{p^{-1}\circ\left(p+\overline{\tau\boldsymbol{u}\left(p\right)}\right)\left(\boldsymbol{v}\left(p\right)\right)-\boldsymbol{v}\left(p\right)}{\tau}
\end{gather*}
for all $p\in A$. Note that then $\Delta_{\boldsymbol{u}}v\left(p\right)$
and $\Delta_{\boldsymbol{u}}\boldsymbol{v}\left(p\right)$ do not
depend on the function $\boldsymbol{u}$ but only on its value $\boldsymbol{u}\left(p\right)$.

We can thus speak about an infinitesimal dissociation, or pseudo-derivative,
of a vector field $\boldsymbol{v}$, and along a vector field $\boldsymbol{u}$.
As a consequence, the vectors $u$, $v$ and $w$ in $T_{uv}$, $R_{uvw}$
and $C_{uvw}$ can be replaced by the vector fields $\boldsymbol{u}$,
$\boldsymbol{v}$ and $\boldsymbol{w}$.

A natural operation for fields on point spaces, especially vector
fields, is differentiation. In the derivative $\nabla_{u}\boldsymbol{v}\left(p\right)$,
discussed below, the vector field $\boldsymbol{v}$ cannot be replaced
by the vector $v$\emph{; }a derivative is a derivative of a field.
On the other hand, we can replace $u$ by the vector field $\boldsymbol{u}$,
setting
\[
\nabla_{\boldsymbol{u}}\boldsymbol{v}\left(p\right)=\nabla_{\boldsymbol{u}\left(p\right)}\boldsymbol{v}\left(p\right)
\]
 for all $p\in A.$ 

\section{\label{sec:5}Differentiation of fields}

Below, we will define several derivatives of point fields and vector
fields.%
\begin{comment}
--Overview

The \emph{$\nabla$-derivative of $F$ with respect to $u$} is the
function $\nabla{}_{u}F:p\mapsto\left(\nabla{}_{u}F\right)\left(p\right)$,
a \emph{differential operator with respect to $u$} is a function
$\nabla_{u}:F\mapsto\nabla{}_{u}F$, and a \emph{differential operator}
is a function $\nabla:u\mapsto\nabla_{u}$.
\end{comment}

\subsection{\label{subsec:51}Complete and reduced derivatives of point fields}

We first consider differentiation of mappings between point spaces
of pointwise affine systems $\mathscr{A}=\left(U,A,\boldsymbol{\mathfrak{a}}\right)$
and $\mathscr{B}=\left(V,B,\boldsymbol{\mathfrak{b}}\right)$. If
$\Phi:A\rightarrow B$ then we define a function 
\begin{gather*}
\dot{\overline{\mathsf{D}}}_{u}\left(\mathscr{A},\mathscr{B},\Phi\right):U\times A^{B}\times A\rightarrow V,\\
\left(u,\Phi,p\right)\mapsto{\textstyle \dot{\overline{\mathrm{D}}}}_{u}\Phi\left(p\right):=\boldsymbol{\mathfrak{b}}\left(\Phi\left(p\right)\right)^{-1}\left(\Phi\left(p+\boldsymbol{\mathfrak{a}}\left(p\right)\left(u\right)\right)\leftarrow\Phi\left(p\right)\right).
\end{gather*}

If $B\subseteq A$ then 
\[
\left\{ \Phi\left(p'\right)\leftarrow\Phi\left(p\right)\mid p,p'\in A\right\} \subseteq\vec{U}
\]
where $\vec{U}=\boldsymbol{\mathfrak{a}}\left(p\right)\left(U\right)$
for an arbitrary $p\in A$, so we can define a function
\begin{gather*}
\dot{\mathsf{D}}_{u}\left(\mathscr{A},\mathscr{B},\Phi\right):U\times A^{B}\times A\rightarrow U,\\
\left(u,\Phi,p\right)\mapsto\dot{\mathsf{D}}_{u}\Phi\left(p\right)=\boldsymbol{\mathfrak{a}}\left(p\right)^{-1}\left(\Phi\left(p+\boldsymbol{\mathfrak{a}}\left(p\right)\left(u\right)\right)\leftarrow\Phi\left(p\right)\right).
\end{gather*}
 In particular, this definition works if $\mathscr{B}=\mathscr{A}=\left(U,A,\boldsymbol{\mathfrak{a}}\right)$.

Using these two difference functions, we define the corresponding
derivatives as follows.
\begin{defn}
\label{def:24}Consider pointwise affine systems $\mathscr{A}=\left(U,A,\boldsymbol{\mathfrak{a}}\right)$
and $\mathscr{B}=\left(V,B,\boldsymbol{\mathfrak{b}}\right)$ with
an associated point field $\Phi:A\rightarrow B$. Assume that the
limits below exist and are linear in $u$.

The\emph{ complete derivative of $\Phi:A\rightarrow B$ with respect
to $u\in U$ at} $p\in A$ is the limit 
\begin{equation}
\dot{\overline{\nabla}}{}_{u}\Phi\left(p\right)=\lim_{\tau\rightarrow0}\frac{{\textstyle \dot{\overline{\mathsf{D}}}}_{\tau u}\Phi\left(p\right)}{\tau}=\lim_{\tau\rightarrow0}\frac{\boldsymbol{\mathfrak{b}}\left(\Phi\left(p\right)\right)^{-1}\left(\Phi\left(p+\overline{\tau u}\right)\leftarrow\Phi\left(p\right)\right)}{\tau},\label{eq:totder-1}
\end{equation}
where $\overline{\tau u}=\boldsymbol{\mathfrak{a}}\left(p\right)\left(\tau u\right)$.

If $B\subseteq A$ then we can define the\emph{ reduced derivative
of $\Phi:A\rightarrow B$ with respect to $u\in U$ at $p\in A$}
as the limit
\begin{equation}
\dot{\nabla}_{u}\Phi\left(p\right)=\lim_{\tau\rightarrow0}\frac{\dot{\mathsf{D}}_{\tau u}\Phi\left(p\right)}{\tau}=\lim_{\tau\rightarrow0}\frac{\boldsymbol{\mathfrak{a}}\left(p\right)^{-1}\left(\Phi\left(p+\overline{\tau u}\right)\leftarrow\Phi\left(p\right)\right)}{\tau}.\label{eq:totde-2}
\end{equation}
\end{defn}

Note that (\ref{eq:totder-1}) yields several special cases. For example,
if $\mathscr{B}=\left(V,B,\boldsymbol{\mathfrak{b}}\right)$ is in
effect an affine system since $\boldsymbol{\mathfrak{b}}\left(p\right)=\mathfrak{b}$
for all $p\in B$, then the\emph{ }derivative of $\Phi:A\rightarrow B$
with respect to $u\in U$ at $p\in A$ is 
\[
\dot{\overline{\nabla}}{}_{u}\Phi\left(p\right)=\lim_{\tau\rightarrow0}\frac{{\textstyle \dot{\overline{\mathsf{D}}}}_{\tau u}\Phi\left(p\right)}{\tau}=\lim_{\tau\rightarrow0}\frac{\mathfrak{b}^{-1}\left(\Phi\left(p+\overline{\tau u}\right)\leftarrow\Phi\left(p\right)\right)}{\tau}
\]

In this section, we have considered $\mathscr{B}=\left(V,B,\boldsymbol{\mathfrak{b}}\right)$
and used a function of one of the forms
\begin{gather*}
A\times A\rightarrow\vec{V},\qquad\left(p',p\right)\mapsto\left(\Phi\left(p'\right)\leftarrow\Phi\left(p\right)\right),\\
A\times A\rightarrow\vec{U},\qquad\left(p',p\right)\mapsto\left(\Phi\left(p'\right)\leftarrow\Phi\left(p\right)\right)
\end{gather*}
to define the derivative of a point field $\Phi:A\rightarrow B$.
Recall that $V$ can be regarded as an affine space, and in the next
two sections we will instead set $\mathscr{B}=\left(V,V,\boldsymbol{\mathfrak{b}}\right)$
and use a function of one of the forms
\begin{gather*}
A\times A\rightarrow\vec{V},\qquad\left(p',p\right)\mapsto\boldsymbol{\mathfrak{b}}\left(\varPhi\left(p'\right)\right)\left(\varPhi\left(p'\right)\right)-\boldsymbol{\mathfrak{b}}\left(\varPhi\left(p\right)\right)\left(\varPhi\left(p\right)\right),\\
A\times A\rightarrow\vec{U},\qquad\left(p',p\right)\mapsto\boldsymbol{\mathfrak{a}}\left(p'\right)\left(\varPhi\left(p'\right)\right)-\boldsymbol{\mathfrak{a}}\left(p\right)\left(\varPhi\left(p\right)\right)
\end{gather*}
to define the derivative of a vector field $\varPhi:A\rightarrow V$.

\subsection{\label{subsec:72}Complete and reduced derivatives of vector fields}

Let $U$ be a vector space over $K$ and consider the pointwise affine
systems $\mathscr{A}=\left(U,A,\boldsymbol{\mathfrak{a}}\right)$
and $\mathscr{B}=\left(V,V,\boldsymbol{\mathfrak{b}}\right)$.

In the general case, we define a function 
\begin{gather}
\overline{\mathsf{D}}_{u}\left(\mathscr{A},\mathscr{B},\varPhi\right):U\times A^{V}\times A\rightarrow V,\label{eq:pdiff1-2}\\
\left(u,\varPhi,p\right)\mapsto\overline{\mathsf{D}}_{u}\varPhi\left(p\right):=\boldsymbol{\mathfrak{b}}\left(\varPhi\left(p\right)\right)^{-1}\left(\boldsymbol{\mathfrak{b}}\left(\varPhi\left(p+\boldsymbol{\mathfrak{a}}\left(p\right)\left(u\right)\right)\right)\left(\varPhi\left(p+\boldsymbol{\mathfrak{a}}\left(p\right)\left(u\right)\right)\right)-\boldsymbol{\mathfrak{b}}\left(\varPhi\left(p\right)\right)\left(\varPhi\left(p\right)\right)\right)\nonumber \\
\qquad\qquad\quad\;=\boldsymbol{\mathfrak{b}}\left(\varPhi\left(p\right)\right)^{-1}\circ\boldsymbol{\mathfrak{b}}\left(\varPhi\left(p+\boldsymbol{\mathfrak{a}}\left(p\right)\left(u\right)\right)\right)\left(\varPhi\left(p+\boldsymbol{\mathfrak{a}}\left(p\right)\left(u\right)\right)\right)-\varPhi\left(p\right)\nonumber \\
=\boldsymbol{\mathfrak{b}}\left(\varPhi\left(p\right)\right)^{-1}\circ\boldsymbol{\mathfrak{b}}\left(\varPhi\left(p+\overline{u}\right)\right)\left(\varPhi\left(p+\overline{u}\right)\right)-\varPhi\left(p\right)
\end{gather}

If $V\leq U$ ($V$ is a subspace of $U)$ then
\begin{gather*}
\left\{ \varPhi\left(p\right)\mid p\in A\right\} \subseteq U.
\end{gather*}
so we can define a function

\begin{align}
\mathsf{D}_{u}\left(\mathscr{A},\mathscr{B},\varPhi\right) & :U\times A^{V}\times A\rightarrow U,\nonumber \\
\left(u,v,p\right)\mapsto\mathsf{D}_{u}\varPhi\left(p\right) & :=\boldsymbol{\mathfrak{a}}\left(p\right)^{-1}\left(\boldsymbol{\mathfrak{a}}\left(p+\boldsymbol{\mathfrak{a}}\left(p\right)\left(u\right)\right)\left(\varPhi\left(p+\boldsymbol{\mathfrak{a}}\left(p\right)\left(u\right)\right)\right)-\boldsymbol{\mathfrak{a}}\left(p\right)\left(\varPhi\left(p\right)\right)\right)\label{eq:26-1}\\
 & \,=p^{-1}\circ\left(p+\overline{u}\right)\left(\varPhi\left(p+\overline{u}\right)\right)-\varPhi\left(p\right).\nonumber 
\end{align}

Using these two difference functions, we obtain the following derivatives.
\begin{defn}
\label{def:d23}Consider the pointwise affine systems $\mathscr{A}=\left(U,A,\boldsymbol{\mathfrak{a}}\right)$
and $\mathscr{B}=\left(V,V,\boldsymbol{\mathfrak{b}}\right)$ with
an associated vector field $\varPhi:A\rightarrow V$, and write $\varPhi$
as $\boldsymbol{v}$. Assume in all cases that the limits exist and
are linear in $u$.

\emph{The complete derivative of $\boldsymbol{v}:A\rightarrow V$
with respect to $u\in U$ at $p\in A$} is
\begin{align}
\overline{\nabla}{}_{u}\boldsymbol{v}\left(p\right)=\lim_{\tau\rightarrow0}\frac{\overline{\mathsf{D}}_{\tau u}\varPhi\left(p\right)}{\tau} & =\lim_{\tau\rightarrow0}\frac{\boldsymbol{\mathfrak{b}}\left(\boldsymbol{v}\left(p\right)\right)^{-1}\circ\boldsymbol{\mathfrak{b}}\left(\boldsymbol{v}\left(p+\overline{\tau u}\right)\right)\left(\boldsymbol{v}\left(p+\overline{\tau u}\right)\right)-\boldsymbol{v}\left(p\right)}{\tau}.\label{eq:vm}
\end{align}

If $V\leq U$ then the \emph{reduced derivative of $\boldsymbol{v}$
with respect to $u\in U$ at $p\in A$} is
\begin{equation}
\nabla_{u}\boldsymbol{v}\left(p\right)=\lim_{\tau\rightarrow0}\frac{\mathsf{D}_{\tau u}\boldsymbol{v}\left(p\right)}{\tau}=\lim_{\tau\rightarrow0}\frac{p^{-1}\circ\left(p+\overline{\tau u}\right)\left(\boldsymbol{v}\left(p+\overline{\tau u}\right)\right)-\boldsymbol{v}\left(p\right)}{\tau}.\label{eq:d26-2}
\end{equation}
\end{defn}

It is clear from these definitions that $\overline{\nabla}{}_{u}\boldsymbol{v}$
and $\nabla_{u}\boldsymbol{v}$ are linear in $\boldsymbol{v}$ as
well as $u$.

\subsection{Plain derivatives to vector spaces }

Recall the definition of $\mathfrak{ci}_{V}$ in Section \ref{sec:Fundamental-notionss2}.
If $\boldsymbol{\mathfrak{b}}=\boldsymbol{\mathfrak{ci}}_{V}$, where
$\boldsymbol{\mathfrak{ci}}_{V}\left(x\right)=\mathfrak{ci}_{V}$
for all $x\in V$, then (\ref{eq:pdiff1-2}) yields
\begin{gather}
\tilde{\mathsf{D}}_{u}\left(\mathscr{A},\mathscr{B},\varPhi\right):U\times A^{V}\times A\rightarrow V,\label{eq:pfiff2-1}\\
\left(u,\varPhi,p\right)\mapsto\tilde{\mathsf{D}}_{u}\varPhi\left(p\right):=\boldsymbol{\mathfrak{ci}}_{V}\left(\varPhi\left(p\right)\right)^{-1}\left(\boldsymbol{\mathfrak{ci}}_{V}\left(\varPhi\left(p+\boldsymbol{\mathfrak{a}}\left(p\right)\left(u\right)\right)\right)\left(\varPhi\left(p+\boldsymbol{\mathfrak{a}}\left(p\right)\left(u\right)\right)\right)-\boldsymbol{\mathfrak{ci}}_{V}\left(\varPhi\left(p\right)\right)\left(\varPhi\left(p\right)\right)\right)\nonumber \\
=\mathfrak{ci}_{V}^{-1}\left(\mathfrak{ci}_{V}\left(\varPhi\left(p+\boldsymbol{\mathfrak{a}}\left(p\right)\left(u\right)\right)\right)-\mathfrak{ci}_{V}\left(\varPhi\left(p\right)\right)\right)=\varPhi\left(p+\boldsymbol{\mathfrak{a}}\left(p\right)u\right)-\varPhi\left(p\right)=\varPhi\left(p+\overline{u}\right)-\varPhi\left(p\right).\nonumber 
\end{gather}

\begin{defn}
The \emph{plain derivative to $V$ of $\boldsymbol{v}$ with respect
to $u\in U$ at $p\in A$} is
\begin{equation}
\delta{}_{u}\boldsymbol{v}\left(p\right)=\lim_{\tau\rightarrow0}\frac{\tilde{\mathsf{D}}{}_{\tau u}\boldsymbol{v}\left(p\right)}{\tau}=\lim_{\tau\rightarrow0}\frac{\boldsymbol{v}\left(p+\overline{\tau u}\right)-\boldsymbol{v}\left(p\right)}{\tau}.\label{eq:diffw}
\end{equation}
 It is clear from this definition that $\delta_{u}\boldsymbol{v}$
is linear in $\boldsymbol{v}$ as well as $u$.
\end{defn}

(\ref{eq:pfiff2-1}) is a special case of (\ref{eq:pdiff1-2}), so
a plain derivative to a vector space is a special case of a complete
derivative, but a common one. For example, if $\mathscr{A}=\left(U,A,\boldsymbol{\mathfrak{a}}\right)$
and $\mathscr{B}=\left(K,K,\boldsymbol{\mathfrak{ci}}{}_{K}\right)$
then the plain derivative to $K$ of the scalar function $\phi$ with
respect to $u\in U$ at $p\in A$ is given by
\begin{equation}
\delta{}_{u}\phi\left(p\right)=\lim_{\tau\rightarrow0}\frac{\tilde{\mathsf{D}}{}_{\tau u}\phi\left(p\right)}{\tau}=\lim_{\tau\rightarrow0}\frac{\phi\left(p+\overline{\tau u}\right)-\phi\left(p\right)}{\tau}.\label{eq:scalder}
\end{equation}
Similarly, if $\mathscr{A}=\left(U,A,\boldsymbol{\mathfrak{a}}\right)$
and $\mathscr{B}=\left(U^{*},U^{*},\boldsymbol{\mathfrak{ci}}{}_{U^{*}}\right)$
then the plain derivative to $U^{*}$ of the covector field $\varphi$
with respect to $u\in U$ at $p\in A$ is
\begin{equation}
\delta{}_{u}\boldsymbol{\varphi}\left(p\right)=\lim_{\tau\rightarrow0}\frac{\tilde{\mathsf{D}}{}_{\tau u}\varphi\left(p\right)}{\tau}=\lim_{\tau\rightarrow0}\frac{\boldsymbol{\varphi}\left(p+\overline{\tau u}\right)-\boldsymbol{\varphi}\left(p\right)}{\tau}\label{eq:covecDer-1}
\end{equation}
. Note that reduced derivatives do not exist in these cases, since
$U\neq K$ and $U\neq U{}^{*}$.

\subsection{\label{subsec:74}Plain derivatives on vector spaces}

In Section \ref{subsec:51}, we considered differentiation of functions
$f:A\rightarrow B$ between two point spaces, the most general setting;
in Section \ref{subsec:72} we considered functions of the form $f:A\rightarrow V$,
where $V$ is a vector space considered as a point space. So far,
$A$ was always a point space proper. In this subsection, we consider
differentiation of functions of the forms $f:U\mapsto B$ and $f:U\rightarrow V$,
where $U$ is a vector space regarded as a flat point space. 

Specifically, by setting $f\left(x+\overline{\tau u}\right)=f\left(x+\boldsymbol{\mathfrak{ci}}{}_{U}\left(x\right)\left(\tau u\right)\right)$
we obtain \emph{plain} \emph{derivatives on vector spaces}. Recall
that $\boldsymbol{\mathfrak{ci}}{}_{U}\left(x\right)\left(u\right)=\mathfrak{ci}_{U}\left(u\right)$
for all $x,u\in U$, so
\[
x+\overline{u}=x+x\left(u\right)=x+\mathfrak{ci}_{U}\left(u\right)=\mathfrak{ci}_{U}\left(u\right)\left(x\right)=u+x=x+u
\]
 for all $x,u\in U$. Thus, $f\left(x+\overline{\tau u}\right)=f\left(x+\tau u\right)$.
We introduce special notation for these derivatives.
\begin{enumerate}
\item[(a)] Corresponding to $\dot{\overline{\nabla}}{}_{u}\Phi\left(x\right)$
in Definition \ref{def:24} we have
\begin{equation}
{\textstyle \dot{\overline{\triangledown}}}{}_{u}\Phi\left(x\right)=\lim_{\tau\rightarrow0}\frac{{\textstyle {\textstyle \dot{\overline{\mathcal{D}}}}}_{\tau u}\Phi\left(p\right)}{\tau}=\lim_{\tau\rightarrow0}\frac{\boldsymbol{\mathfrak{b}}\left(\Phi\left(x\right)\right)^{-1}\left(\Phi\left(x+\tau u\right)\leftarrow\Phi\left(x\right)\right)}{\tau},\label{eq:p3-1}
\end{equation}
\item[(b)] Corresponding to $\dot{\nabla}{}_{u}\Phi\left(x\right)$ in Definition
\ref{def:24} we have
\begin{equation}
\dot{\triangledown}{}_{u}\Phi\left(x\right)=\lim_{\tau\rightarrow0}\frac{\dot{\mathcal{D}}_{\tau u}\Phi\left(p\right)}{\tau}=\lim_{\tau\rightarrow0}\frac{\mathbf{\boldsymbol{\mathfrak{a}}}\left(x\right)^{-1}\left(\Phi\left(x+\tau u\right)\leftarrow\Phi\left(x\right)\right)}{\tau},\label{eq:p3-1-1}
\end{equation}
\item[(c)] Corresponding to $\overline{\nabla}_{u}\boldsymbol{v}\left(x\right)$
in Definition \ref{def:d23} we have
\begin{equation}
\overline{\triangledown}{}_{u}\boldsymbol{v}\left(p\right)=\lim_{\tau\rightarrow0}\frac{\overline{\mathcal{D}}_{\tau u}\varPhi\left(p\right)}{\tau}=\lim_{\tau\rightarrow0}\frac{\boldsymbol{\mathfrak{b}}\left(\boldsymbol{v}\left(p\right)\right)^{-1}\circ\boldsymbol{\mathfrak{b}}\left(\boldsymbol{v}\left(p+\tau u\right)\right)\left(\boldsymbol{v}\left(p+\tau u\right)\right)-\boldsymbol{v}\left(p\right)}{\tau}.\label{eq:p2-1-1}
\end{equation}
\item[(d)] Corresponding to $\nabla_{u}\boldsymbol{v}\left(x\right)$ in Definition
\ref{def:d23} we have
\begin{equation}
\triangledown{}_{u}\boldsymbol{v}\left(x\right)=\lim_{\tau\rightarrow0}\frac{\mathcal{D}_{\tau u}\boldsymbol{v}\left(p\right)}{\tau}=\lim_{\tau\rightarrow0}\frac{x^{-1}\circ\left(x+\overline{\tau u}\right)\left(\boldsymbol{v}\left(x+\tau u\right)\right)-\boldsymbol{v}\left(x\right)}{\tau}.\label{eq:p2-1}
\end{equation}
\item[(e)] Corresponding to $\left(\delta{}_{u}\boldsymbol{v}\right)\left(x\right)$
in Definition \ref{def:d23} we have
\begin{equation}
\partial{}_{u}\boldsymbol{v}\left(x\right)=\lim_{\tau\rightarrow0}\frac{\tilde{\mathcal{D}}_{\tau u}\boldsymbol{v}\left(p\right)}{\tau}=\lim_{\tau\rightarrow0}\frac{\mathcal{\tilde{D}}{}_{\tau u}\boldsymbol{v}\left(p\right)}{\tau}=\lim_{\tau\rightarrow0}\frac{\boldsymbol{v}\left(x+\tau u\right)-\boldsymbol{v}\left(x\right)}{\tau},\label{eq:p1}
\end{equation}
corresponding to $\left(\delta{}_{u}\phi\right)\left(x\right)$ in
(\ref{eq:scalder}) we have
\begin{equation}
\partial{}_{u}\phi\left(x\right)=\lim_{\tau\rightarrow0}\frac{\mathcal{\tilde{D}}{}_{\tau u}\phi\left(p\right)}{\tau}=\lim_{\tau\rightarrow0}\frac{\phi\left(x+\tau u\right)-\phi\left(x\right)}{\tau},\label{eq:p1-1}
\end{equation}
 and corresponding to $\left(\delta{}_{u}\varphi\right)\left(x\right)$
in (\ref{eq:covecDer-1}) we have
\begin{equation}
\partial{}_{u}\varphi\left(x\right)=\lim_{\tau\rightarrow0}\frac{\mathcal{\tilde{D}}{}_{\tau u}\varphi\left(p\right)}{\tau}=\lim_{\tau\rightarrow0}\frac{\varphi\left(x+\tau u\right)-\phi\left(x\right)}{\tau}.\label{eq:p1-1-1}
\end{equation}
\end{enumerate}
Thus, we use the substitutions $\dot{\overline{\nabla}}_{u}\mapsto{\textstyle {\textstyle \dot{\overline{\triangledown}}}{}_{u}}$,
$\dot{\nabla}{}_{u}\mapsto\dot{\triangledown}{}_{u}$, $\overline{\nabla}_{u}\mapsto\overline{\triangledown}{}_{u}$,
$\nabla_{u}\mapsto\triangledown{}_{u}$ and $\delta_{u}\mapsto\partial_{u}$.

Note that the pointwise affine systems $\mathscr{A}$ associated with
complete derivatives on plain vector spaces (cases (a), (c) and (e))
have the form $\left(U,U,\boldsymbol{\mathfrak{ci}}_{U}\right)$,
while the pointwise affine systems $\mathscr{A}$ associated with
reduced derivatives on plain vector spaces (cases (b) and (d)) have
the form $\left(U,U,\boldsymbol{\mathfrak{a}}\right)$.

It is convenient to introduce notation that combines different kinds
of derivatives. We sometimes denote any and all of $\overline{\nabla}_{u},\overline{\triangledown}{}_{u},\nabla_{u},\triangledown_{u},\delta_{u},\partial_{u}$
by the generic derivative $\boldsymbol{\nabla}_{u}$. For example,
$\boldsymbol{\nabla}_{u}\lambda\boldsymbol{v}=\lambda\boldsymbol{\nabla}_{u}\boldsymbol{v}$
may be interpreted as $\nabla_{u}\lambda\boldsymbol{v}=\lambda\nabla_{u}\boldsymbol{v}$,
$\partial_{u}\lambda\boldsymbol{v}=\lambda\partial_{u}\boldsymbol{v}$
etc. A \emph{consistent} interpretation is one where where all occurrences
of $\boldsymbol{\nabla_{u}}$ are interpreted as the same derivative.
Some identities hold for many interpretations of $\boldsymbol{\nabla_{u}},$
often all consistent interpretations, so we can use this notation
as a shorthand.

\subsection{\label{subsec:75}Linearity properties}

Recall from Section \ref{subsec:42} that $\Delta_{u}v$ is linear
in the vectors $u$ and $v$,\emph{ }partly by assumption and partly
by calculation. The derivatives $\overline{\nabla}{}_{u}\boldsymbol{v}$,
$\nabla_{u}\boldsymbol{v}$ and $\delta{}_{u}\boldsymbol{v}$, and
variants thereof, have similar linearity properties. For example,
\begin{equation}
\nabla_{\lambda u+u'}\boldsymbol{v}\left(p\right)=\lambda\nabla_{u}\boldsymbol{v}\left(p\right)+\nabla_{u'}\boldsymbol{v}\left(p\right),\qquad\nabla_{u}\left(\lambda\boldsymbol{v}\left(p\right)+\boldsymbol{v}'\left(p\right)\right)=\lambda\nabla_{u}\boldsymbol{v}\left(p\right)+\nabla_{u}\boldsymbol{v}'\left(p\right)\label{eq:linder}
\end{equation}
 for any $\lambda\in K$, so $\nabla_{u}\boldsymbol{v}$ is linear
in $u$ and $\boldsymbol{v}$ with scalar multiplication by elements
of $K$.

Derivatives with regard to vector fields have analogous linearity
properties. Thus, if we set $\nabla_{\phi\boldsymbol{u}}\boldsymbol{v}\left(p\right)=\nabla_{\phi\left(p\right)\boldsymbol{u}\left(p\right)}\boldsymbol{v}\left(p\right)$
and $\phi\nabla_{\boldsymbol{u}}\boldsymbol{v}\left(p\right)=\phi\left(p\right)\nabla_{\boldsymbol{u}}\boldsymbol{v}\left(p\right)$
then (\ref{eq:linder}) generalizes to
\[
\nabla_{\left(\phi\boldsymbol{u}+\boldsymbol{u}'\right)}\boldsymbol{v}\left(p\right)=\phi\nabla_{\boldsymbol{u}}\boldsymbol{v}\left(p\right)+\nabla_{\boldsymbol{u}'}\boldsymbol{v}\left(p\right),\qquad\nabla_{\boldsymbol{u}}\left(\lambda\boldsymbol{v}\left(p\right)+\boldsymbol{v}'\left(p\right)\right)=\lambda\nabla_{\boldsymbol{u}}\boldsymbol{v}\left(p\right)+\nabla_{\boldsymbol{u}}\boldsymbol{v}'\left(p\right)
\]
for any $\phi\in A^{K}$ and $\lambda\in K$, so $\nabla_{\boldsymbol{u}}\boldsymbol{v}$
is linear in $\boldsymbol{u}$ with scalar multiplication by elements
of the ring $A^{K}$, and in $\boldsymbol{v}$ with scalar multiplication
by elements of $K$.

\section{Product rules}

There are well-known rules for differentiation of products of vector
fields. In this section, we formulate such rules for the somewhat
more general case of bilinear maps of vectors or vector fields.

Recall that if $\lim_{\tau\rightarrow a}X\left(\tau\right)$ exists
and $f:X\mapsto f\left(X\right)$ is continuous then $\lim_{\tau\rightarrow a}f\left(X\left(\tau\right)\right)=f\left(\lim_{\tau\rightarrow a}X\left(\tau\right)\right)$.
More generally, if $f:X,Y\mapsto f\left(X,Y\right)$ is continuous
and $\lim_{\tau\rightarrow a}X\left(\tau\right)$ and $\lim_{\tau\rightarrow a}Y\left(\tau\right)$
exist then 
\begin{equation}
\lim_{\tau\rightarrow a}f\left(X\left(\tau\right),Y\left(\tau\right)\right)=f\left(\lim_{\tau\rightarrow a}X\left(\tau\right),\lim_{\tau\rightarrow a}Y\left(\tau\right)\right).\label{eq:lim-lim}
\end{equation}

\subsection{Pseudo-derivatives}
\begin{prop}
Let $\left(v,w\right)\mapsto\left\langle \!\left\langle v,w\right\rangle \!\right\rangle $,
where $v$, $w$ and $\left\langle \!\left\langle v,w\right\rangle \!\right\rangle $
are vectors, be a bilinear map, and let $\Delta_{u}\left\langle \!\left\langle v,w\right\rangle \!\right\rangle \left(p\right)$
be defined by 
\[
\Delta_{u}\left\langle \!\left\langle v,w\right\rangle \!\right\rangle \left(p\right)=\lim_{\tau\rightarrow0}\frac{\left\langle \!\left\langle p^{-1}\circ\left(p+\overline{\tau u}\right)\left(v\right),p^{-1}\circ\left(p+\overline{\tau u}\right)\left(w\right)\right\rangle \!\right\rangle -\left\langle \!\left\langle v,w\right\rangle \!\right\rangle }{\tau}.
\]
If $\Delta{}_{u}v\left(p\right)$ and $\Delta{}_{u}w\left(p\right)$
exist then $\Delta_{u}\left\langle \!\left\langle v,w\right\rangle \!\right\rangle \left(p\right)$
exists and 
\[
\Delta_{u}\left\langle \!\left\langle v,w\right\rangle \!\right\rangle \left(p\right)=\left\langle \!\left\langle v,\Delta{}_{u}w\left(p\right)\right\rangle \!\right\rangle +\left\langle \!\left\langle \Delta{}_{u}v\left(p\right),w\right\rangle \!\right\rangle .
\]
\end{prop}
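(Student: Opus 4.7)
The plan is to mimic the standard Leibniz-rule argument, using the add-and-subtract trick on a mixed term. For brevity, introduce the shorthand $v_{\tau} := p^{-1} \circ (p + \overline{\tau u})(v)$ and $w_{\tau} := p^{-1} \circ (p + \overline{\tau u})(w)$, so that the difference quotient defining $\Delta_u \left\langle\!\left\langle v, w \right\rangle\!\right\rangle(p)$ is $\tau^{-1}\bigl(\left\langle\!\left\langle v_{\tau}, w_{\tau} \right\rangle\!\right\rangle - \left\langle\!\left\langle v, w \right\rangle\!\right\rangle\bigr)$. With this notation, the hypotheses read $(v_{\tau} - v)/\tau \to \Delta_u v(p)$ and $(w_{\tau} - w)/\tau \to \Delta_u w(p)$.

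The first key step is to insert the mixed term $\left\langle\!\left\langle v, w_{\tau} \right\rangle\!\right\rangle$ and use bilinearity of $\left\langle\!\left\langle \cdot,\cdot \right\rangle\!\right\rangle$ in each slot to rewrite the difference quotient as
\[
\left\langle\!\left\langle \tfrac{v_{\tau} - v}{\tau},\, w_{\tau} \right\rangle\!\right\rangle + \left\langle\!\left\langle v,\, \tfrac{w_{\tau} - w}{\tau} \right\rangle\!\right\rangle.
\]
The second key step is to pass to the limit $\tau \to 0$. By the existence assumption, the first factor of the first term and the second factor of the second term converge to $\Delta_u v(p)$ and $\Delta_u w(p)$ respectively. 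The other two factors are constant or converge trivially: $v$ is independent of $\tau$, and $w_{\tau} \to w$ by the continuity identity~\eqref{eq:cont}, which is exactly the consequence of $\Delta_u w(p)$ existing (it forces $p^{-1}\circ(p+\overline{\tau u})(w) - w \to 0$).

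Finally, I invoke the joint continuity of the bilinear pairing in the form of~\eqref{eq:lim-lim} to push the limit inside each $\left\langle\!\left\langle \cdot,\cdot \right\rangle\!\right\rangle$ and conclude
\[
\Delta_u \left\langle\!\left\langle v, w \right\rangle\!\right\rangle(p) = \left\langle\!\left\langle \Delta_u v(p), w \right\rangle\!\right\rangle + \left\langle\!\left\langle v, \Delta_u w(p) \right\rangle\!\right\rangle,
\]
which simultaneously establishes that the limit defining the left-hand side exists. The only subtle point is the second step: one must be sure that $w_{\tau} \to w$ (not merely that the quotient $(w_{\tau}-w)/\tau$ converges), and this is precisely what~\eqref{eq:cont} supplies. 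No other nontrivial obstacle arises, since bilinearity handles the algebra and joint continuity of a bilinear map on finite-dimensional or suitably topologized spaces is standard.
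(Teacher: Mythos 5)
Your proof is correct and follows essentially the same route as the paper's: add and subtract a mixed term, use bilinearity to split the difference quotient into two terms, note that the non-quotient factor converges (via (\ref{eq:cont}), or directly from existence of the pseudo-derivative), and pass the limit inside via (\ref{eq:lim-lim}). The only cosmetic difference is that you insert $\left\langle \!\left\langle v,w_{\tau}\right\rangle \!\right\rangle$ where the paper inserts $\left\langle \!\left\langle P_{\tau}\left(v\right),w\right\rangle \!\right\rangle$, which merely swaps which slot carries the $\tau$-dependent factor.
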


\begin{proof}
Set $P_{\tau}=p^{-1}\circ\left(p+\overline{\tau u}\right)$ to save
space. Using (\ref{eq:lim-lim}), we obtain
\begin{align*}
\Delta_{u}\left\langle \!\left\langle v,w\right\rangle \!\right\rangle \left(p\right) & =\lim\nolimits_{\tau\rightarrow0}\nicefrac{1}{\tau}\left(\left\langle \!\left\langle P_{\tau}\left(v\right),P_{\tau}\left(w\right)\right\rangle \!\right\rangle -\left\langle \!\left\langle v,w\right\rangle \!\right\rangle \right)\\
 & =\lim\nolimits_{\tau\rightarrow0}\nicefrac{1}{\tau}\left(\left\langle \!\left\langle P_{\tau}\left(v\right),P_{\tau}\left(w\right)\right\rangle \!\right\rangle -\left\langle \!\left\langle P_{\tau}\left(v\right),w\right\rangle \!\right\rangle +\left\langle \!\left\langle P_{\tau}\left(v\right),w\right\rangle \!\right\rangle -\left\langle \!\left\langle v,w\right\rangle \!\right\rangle \right)\\
 & =\lim\nolimits_{\tau\rightarrow0}\nicefrac{1}{\tau}\left(\left\langle \!\left\langle P_{\tau}\left(v\right),P_{\tau}\left(w\right)-w\right\rangle \!\right\rangle +\left\langle \!\left\langle P_{\tau}\left(v\right)-v,w\right\rangle \!\right\rangle \right)\\
 & =\lim\nolimits_{\tau\rightarrow0}\nicefrac{1}{\tau}\left\langle \!\left\langle P_{\tau}\left(v\right),P_{\tau}\left(w\right)-w\right\rangle \!\right\rangle +\lim\nolimits_{\tau\rightarrow0}\nicefrac{1}{\tau}\left\langle \!\left\langle P_{\tau}\left(v\right)-v,w\right\rangle \!\right\rangle \\
 & =\lim\nolimits_{\tau\rightarrow0}\left\langle \!\left\langle P_{\tau}\left(v\right),\nicefrac{1}{\tau}\left(P_{\tau}\left(w\right)-w\right)\right\rangle \!\right\rangle +\lim\nolimits_{\tau\rightarrow0}\left\langle \!\left\langle \nicefrac{1}{\tau}\left(P_{\tau}\left(v\right)-v\right),w\right\rangle \!\right\rangle \\
 & =\left\langle \!\left\langle \lim\nolimits_{\tau\rightarrow0}P_{\tau}\left(v\right),\lim\nolimits_{\tau\rightarrow0}\nicefrac{1}{\tau}\left(P_{\tau}\left(w\right)-w\right)\right\rangle \!\right\rangle +\left\langle \!\left\langle \lim\nolimits_{\tau\rightarrow0}\nicefrac{1}{\tau}\left(P_{\tau}\left(v\right)-v\right),w\right\rangle \!\right\rangle \\
 & =\left\langle \!\left\langle v,\Delta{}_{u}w\left(p\right)\right\rangle \!\right\rangle +\left\langle \!\left\langle \Delta{}_{u}v\left(p\right),w\right\rangle \!\right\rangle .\qedhere
\end{align*}
\end{proof}
Analogous results hold when vector fields are substituted for vectors
in $\Delta_{u}\left\langle \!\left\langle v,w\right\rangle \!\right\rangle \left(p\right)$.

\subsection{Coherent derivatives (uniform vector fields)}

If $\mathscr{A}=\left(U,A,\boldsymbol{\mathfrak{a}}\right)$, $\mathscr{B}=\left(V,V,\boldsymbol{\mathfrak{b}}\right)$
and $\mathscr{C}=\left(W,W,\boldsymbol{\mathfrak{c}}\right)$, where
$V,W\subseteq U$, then $\boldsymbol{v}:A\rightarrow V$ and $\boldsymbol{w}:A\rightarrow W$
are said to be uniform vector fields and a derivative of a bilinear
map $\left(\boldsymbol{v},\boldsymbol{w}\right)\mapsto\left\langle \!\left\langle \boldsymbol{v},\boldsymbol{w}\right\rangle \!\right\rangle $
is said to be a \emph{coherent} derivative.
\begin{lem}
\label{lem:29}If the pointwise affine space considered is differentiable
then 
\[
\lim\nolimits_{\tau\rightarrow0}p^{-1}\circ\left(p+\overline{\tau u}\right)\left(\boldsymbol{v}\left(p+\overline{\tau u}\right)\right)=\boldsymbol{v}\left(p\right).
\]
\end{lem}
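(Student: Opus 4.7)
The plan is to exploit the linearity of the isomorphism $P_\tau := p^{-1}\circ(p+\overline{\tau u})\colon V\to V$ to split the quantity of interest into two pieces that can be handled separately. Specifically, by adding and subtracting $P_\tau(\boldsymbol{v}(p))$ and pulling out $P_\tau$ from the resulting difference, I would write
\[
P_\tau\bigl(\boldsymbol{v}(p+\overline{\tau u})\bigr) - \boldsymbol{v}(p) = \bigl[P_\tau\bigl(\boldsymbol{v}(p)\bigr) - \boldsymbol{v}(p)\bigr] + P_\tau\bigl(\boldsymbol{v}(p+\overline{\tau u}) - \boldsymbol{v}(p)\bigr),
\]
and then show that each of the two terms on the right-hand side tends to $0$ as $\tau\to 0$.

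The first bracket vanishes in the limit by direct application of equation (\ref{eq:cont}) with the fixed vector $v:=\boldsymbol{v}(p)\in V$; this is exactly the continuity of the affine action field packaged into the definition of a differentiable pointwise affine space. So no further work is needed here.

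The main obstacle lies in the second piece, where the argument of $P_\tau$ varies with $\tau$ rather than being fixed. To handle it, I would invoke two ingredients: first, that the coherent setting of Subsection~8.2 tacitly assumes $\boldsymbol{v}$ is a vector field for which the relevant pseudo-derivative or derivative exists (in particular $\boldsymbol{v}$ is continuous at $p$), so $\boldsymbol{v}(p+\overline{\tau u}) - \boldsymbol{v}(p)\to 0$; and second, that $P_\tau$ converges to $\mathrm{id}_V$ pointwise by (\ref{eq:cont}), which in the finite-dimensional setting of the paper forces $\|P_\tau\|$ to stay bounded on a punctured neighborhood of $\tau=0$. These combine to give $\|P_\tau(\boldsymbol{v}(p+\overline{\tau u}) - \boldsymbol{v}(p))\|\to 0$.

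A cleaner alternative, avoiding norm estimates and consistent with the spirit of the paper, is to assume existence of $\nabla_u\boldsymbol{v}(p)$ from Definition~\ref{def:d23}. Then by the defining limit (\ref{eq:d26-2}) one has
\[
P_\tau\bigl(\boldsymbol{v}(p+\overline{\tau u})\bigr) - \boldsymbol{v}(p) = \tau\,\nabla_u\boldsymbol{v}(p) + o(\tau),
\]
and the right-hand side visibly tends to $0$, which delivers the lemma immediately.
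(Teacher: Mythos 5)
Your argument is correct and arrives at the same conclusion as the paper, but via a genuinely different decomposition, and the comparison is instructive. The paper adds and subtracts $\boldsymbol{v}\left(p+\overline{\tau u}\right)$, writing the quantity as $X\left(\tau\right)+Y\left(\tau\right)$ with $X\left(\tau\right)=p^{-1}\circ\left(p+\overline{\tau u}\right)\left(\boldsymbol{v}\left(p+\overline{\tau u}\right)\right)-\boldsymbol{v}\left(p+\overline{\tau u}\right)$ and $Y\left(\tau\right)=\boldsymbol{v}\left(p+\overline{\tau u}\right)$, then sends $X\left(\tau\right)\rightarrow0$ by (\ref{eq:cont}) and $Y\left(\tau\right)\rightarrow\boldsymbol{v}\left(p\right)$ by (implicit) continuity of $\boldsymbol{v}$ at $p$. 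You instead add and subtract $P_{\tau}\left(\boldsymbol{v}\left(p\right)\right)$, where $P_{\tau}=p^{-1}\circ\left(p+\overline{\tau u}\right)$. The trade-off is this: your first bracket is a strictly licit application of (\ref{eq:cont}), which is stated only for a \emph{fixed} vector $v$, whereas the paper applies (\ref{eq:cont}) to the $\tau$-dependent argument $\boldsymbol{v}\left(p+\overline{\tau u}\right)$ --- a step which, read literally, needs exactly the kind of uniformity (boundedness of $P_{\tau}$ near $\tau=0$) that you supply explicitly for your second piece. So your route surfaces and then closes a gap that the paper's own proof passes over silently; the cost is that you must either invoke finite-dimensionality (nowhere stated in the paper) to get the operator-norm bound, or strengthen the hypotheses to the existence of $\nabla_{u}\boldsymbol{v}\left(p\right)$ as in your cleaner alternative via (\ref{eq:d26-2}). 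That alternative is entirely consonant with how the lemma is actually deployed, since in Proposition \ref{prop:26a} the reduced derivatives are assumed to exist. Note finally that both your proof and the paper's require $\lim_{\tau\rightarrow0}\boldsymbol{v}\left(p+\overline{\tau u}\right)=\boldsymbol{v}\left(p\right)$, a continuity hypothesis on $\boldsymbol{v}$ that is absent from the lemma's statement; you are explicit about assuming it, the paper is not.
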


\begin{proof}
Set $X\left(\tau\right)=p^{-1}\circ\left(p+\overline{\tau u}\right)\left(\boldsymbol{v}\left(p+\overline{\tau u}\right)\right)-\boldsymbol{v}\left(p+\overline{\tau u}\right)$
and $Y\left(\tau\right)=\boldsymbol{v}\left(p+\overline{\tau u}\right)$.
Using (\ref{eq:cont}), we obtain
\begin{align*}
\lim_{\tau\rightarrow0}p^{-1}\circ\left(p+\overline{\tau u}\right)\left(\boldsymbol{v}\left(p+\overline{\tau u}\right)\right) & =\lim_{\tau\rightarrow0}\left(X\left(\tau\right)+Y\left(\tau\right)\right)=\lim_{\tau\rightarrow0}X\left(\tau\right)+\lim_{\tau\rightarrow0}Y\left(\tau\right)\\
 & =\lim_{\tau\rightarrow0}Y\left(\tau\right)=\boldsymbol{v}\left(p\right).\qedhere
\end{align*}
\end{proof}
\begin{prop}
\label{prop:26a}Consider a differentiable pointwise affine space
$A$ and vector fields $\boldsymbol{\boldsymbol{v}},\boldsymbol{w}:A\rightarrow V$,
and let the reduced derivative $\nabla_{u}\left\langle \!\left\langle \boldsymbol{\boldsymbol{v}},\boldsymbol{w}\right\rangle \!\right\rangle \left(p\right)$
be defined by 
\[
\nabla_{u}\left\langle \!\left\langle \boldsymbol{v},\boldsymbol{w}\right\rangle \!\right\rangle \left(p\right)=\lim_{\tau\rightarrow0}\frac{\left\langle \!\left\langle p^{-1}\circ\left(p+\overline{\tau u}\right)\left(\boldsymbol{v}\left(p+\overline{\tau u}\right)\right),p^{-1}\circ\left(p+\overline{\tau u}\right)\left(\boldsymbol{w}\left(p+\overline{\tau u}\right)\right)\right\rangle \!\right\rangle -\left\langle \!\left\langle \boldsymbol{v}\left(p\right),\boldsymbol{w}\left(p\right)\right\rangle \!\right\rangle }{\tau}
\]
If the reduced derivatives $\nabla_{u}\boldsymbol{v}\left(p\right)$
and $\nabla_{u}\boldsymbol{w}\left(p\right)$ exist then $\nabla_{u}\left\langle \!\left\langle \boldsymbol{v},\boldsymbol{w}\right\rangle \!\right\rangle \left(p\right)$
exists and
\[
\nabla_{u}\left\langle \!\left\langle \boldsymbol{v},\boldsymbol{w}\right\rangle \!\right\rangle \left(p\right)=\left\langle \!\left\langle \boldsymbol{v}\left(p\right),\nabla_{u}\boldsymbol{w}\left(p\right)\right\rangle \!\right\rangle +\left\langle \!\left\langle \nabla_{u}\boldsymbol{v}\left(p\right),\boldsymbol{w}\left(p\right)\right\rangle \!\right\rangle .
\]
\end{prop}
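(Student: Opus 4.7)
The plan is to mirror the proof of the analogous Leibniz rule for $\Delta_u$, adapting it to the fact that each slot of $\left\langle\!\left\langle\cdot,\cdot\right\rangle\!\right\rangle$ now contains an expression of the form $p^{-1}\circ(p+\overline{\tau u})(\boldsymbol{v}(p+\overline{\tau u}))$, which depends on $\tau$ both through the transport factor $P_\tau := p^{-1}\circ(p+\overline{\tau u})$ and through the evaluation point of the vector field. I introduce the abbreviations $v_\tau := \boldsymbol{v}(p+\overline{\tau u})$ and $w_\tau := \boldsymbol{w}(p+\overline{\tau u})$ to keep the telescoping step readable.

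The first step is to add and subtract the mixed term $\left\langle\!\left\langle P_\tau(v_\tau),\boldsymbol{w}(p)\right\rangle\!\right\rangle$ in the numerator of the difference quotient and split the result by bilinearity as
\[
\left\langle\!\left\langle P_\tau(v_\tau),\; P_\tau(w_\tau)-\boldsymbol{w}(p)\right\rangle\!\right\rangle + \left\langle\!\left\langle P_\tau(v_\tau)-\boldsymbol{v}(p),\; \boldsymbol{w}(p)\right\rangle\!\right\rangle.
\]
Dividing by $\tau$ and pulling the scalar $1/\tau$ into the appropriate slot via bilinearity converts each summand into an expression whose limit is controlled by one ingredient already in hand: in the first summand, the factor $P_\tau(v_\tau)$ tends to $\boldsymbol{v}(p)$ by Lemma \ref{lem:29}, while the divided difference in its second slot tends to $\nabla_u\boldsymbol{w}(p)$ by (\ref{eq:d26-2}); in the second summand, the divided difference in the first slot tends to $\nabla_u\boldsymbol{v}(p)$ for the same reason.

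Next I would invoke the joint continuity identity (\ref{eq:lim-lim}) to distribute the limits through the bilinear pairing and assemble the two pieces into
\[
\left\langle\!\left\langle \boldsymbol{v}(p),\nabla_u\boldsymbol{w}(p)\right\rangle\!\right\rangle + \left\langle\!\left\langle \nabla_u\boldsymbol{v}(p),\boldsymbol{w}(p)\right\rangle\!\right\rangle,
\]
which simultaneously yields the existence of $\nabla_u\left\langle\!\left\langle\boldsymbol{v},\boldsymbol{w}\right\rangle\!\right\rangle(p)$ and the claimed identity.

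The only genuinely nontrivial step is the appeal to Lemma \ref{lem:29}: this is the single place where the hypothesis that $A$ is differentiable is actually used, and it is the only non-algebraic ingredient in the argument. Without it, the factor $P_\tau(v_\tau)$ in the first summand could fail to converge and bilinearity alone would not suffice to isolate $\nabla_u\boldsymbol{w}(p)$ from the mixed expression; with it, the rest of the proof is essentially the bookkeeping version of the earlier $\Delta_u$-Leibniz rule.
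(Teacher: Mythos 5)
Your proof is correct and follows essentially the same route as the paper's: the same insertion of the mixed term $\left\langle\!\left\langle P_{\tau}\left(\boldsymbol{v}\left(p+\overline{\tau u}\right)\right),\boldsymbol{w}\left(p\right)\right\rangle\!\right\rangle$, the same bilinear splitting, and the same appeals to Lemma \ref{lem:29} and to (\ref{eq:lim-lim}) to pass the limits through the pairing. Your second summand, written as $\left\langle\!\left\langle P_{\tau}\left(v_{\tau}\right)-\boldsymbol{v}\left(p\right),\boldsymbol{w}\left(p\right)\right\rangle\!\right\rangle$, is in fact the form the telescoping actually produces (and the one whose divided difference converges to $\nabla_{u}\boldsymbol{v}\left(p\right)$ by definition), so your bookkeeping is if anything slightly cleaner than the printed version.
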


\begin{proof}
Set $P_{\tau}=p^{-1}\circ\left(p+\overline{\tau u}\right)$. Using
(\ref{eq:lim-lim}) and Lemma \ref{lem:29}, we obtain
\begin{align*}
\nabla_{u}\left\langle \!\left\langle \boldsymbol{v},\boldsymbol{w}\right\rangle \!\right\rangle \left(p\right) & =\lim\nolimits_{\tau\rightarrow0}\nicefrac{1}{\tau}\left(\left\langle \!\left\langle P_{\tau}\left(\boldsymbol{v}\left(p+\overline{\tau u}\right)\right),P_{\tau}\left(\boldsymbol{w}\left(p+\overline{\tau u}\right)\right)\right\rangle \!\right\rangle -\left\langle \!\left\langle \boldsymbol{v}\left(p\right),\boldsymbol{w}\left(p\right)\right\rangle \!\right\rangle \right)\\
 & =\lim\nolimits_{\tau\rightarrow0}\nicefrac{1}{\tau}\left(\left\langle \!\left\langle P_{\tau}\left(\boldsymbol{v}\left(p+\overline{\tau u}\right)\right),P_{\tau}\left(\boldsymbol{w}\left(p+\overline{\tau u}\right)\right)\right\rangle \!\right\rangle -\left\langle \!\left\langle P_{\tau}\left(\boldsymbol{v}\left(p+\overline{\tau u}\right)\right),\boldsymbol{w}\left(p\right)\right\rangle \!\right\rangle \right.\\
 & \qquad\qquad\qquad\,\left.+\left\langle \!\left\langle P_{\tau}\left(\boldsymbol{v}\left(p+\overline{\tau u}\right)\right),\boldsymbol{w}\left(p\right)\right\rangle \!\right\rangle -\left\langle \!\left\langle \boldsymbol{v}\left(p\right),\boldsymbol{w}\left(p\right)\right\rangle \!\right\rangle \right)\\
 & =\lim\nolimits_{\tau\rightarrow0}\nicefrac{1}{\tau}\left\langle \!\left\langle P_{\tau}\left(\boldsymbol{v}\left(p+\overline{\tau u}\right)\right),P_{\tau}\left(\boldsymbol{w}\left(p+\overline{\tau u}\right)\right)-\boldsymbol{w}\left(p\right)\right\rangle \!\right\rangle \\
 & +\lim\nolimits_{\tau\rightarrow0}\nicefrac{1}{\tau}\left\langle \!\left\langle P_{\tau}\left(\boldsymbol{v}\left(p+\overline{\tau u}\right)-\boldsymbol{v}\left(p\right)\right),\boldsymbol{w}\left(p\right)\right\rangle \!\right\rangle \\
 & =\lim\nolimits_{\tau\rightarrow0}\left\langle \!\left\langle P_{\tau}\left(\boldsymbol{v}\left(p+\overline{\tau u}\right)\right),\nicefrac{1}{\tau}\left(P_{\tau}\left(\boldsymbol{w}\left(p+\overline{\tau u}\right)\right)-\boldsymbol{w}\left(p\right)\right)\right\rangle \!\right\rangle \\
 & +\lim\nolimits_{\tau\rightarrow0}\left\langle \!\left\langle \nicefrac{1}{\tau}\left(P_{\tau}\left(\boldsymbol{v}\left(p+\overline{\tau u}\right)-\boldsymbol{v}\left(p\right)\right)\right),\boldsymbol{w}\left(p\right)\right\rangle \!\right\rangle \\
 & =\left\langle \!\left\langle \lim\nolimits_{\tau\rightarrow0}P_{\tau}\left(\boldsymbol{v}\left(p+\overline{\tau u}\right)\right),\lim\nolimits_{\tau\rightarrow0}\nicefrac{1}{\tau}\left(P_{\tau}\left(\boldsymbol{w}\left(p+\overline{\tau u}\right)\right)-\boldsymbol{w}\left(p\right)\right)\right\rangle \!\right\rangle \\
 & +\left\langle \!\left\langle \lim\nolimits_{\tau\rightarrow0}\nicefrac{1}{\tau}\left(P_{\tau}\left(\boldsymbol{v}\left(p+\overline{\tau u}\right)-\boldsymbol{v}\left(p\right)\right)\right),\boldsymbol{w}\left(p\right)\right\rangle \!\right\rangle \\
 & =\left\langle \!\left\langle \boldsymbol{v}\left(p\right),\nabla_{u}\boldsymbol{w}\left(p\right)\right\rangle \!\right\rangle +\left\langle \!\left\langle \nabla_{u}\boldsymbol{v}\left(p\right),\boldsymbol{w}\left(p\right)\right\rangle \!\right\rangle .\qedhere
\end{align*}
\end{proof}
The following result for plain derivatives to vector fields can be
proved similarly.
\begin{prop}
\label{prop:p30-1}Let $A$ be differentiable, consider the vector
fields $\boldsymbol{\boldsymbol{v}},\boldsymbol{w}:A\rightarrow V$
and let the plain derivative to $V$ $\delta_{u}\left\langle \!\left\langle \boldsymbol{\boldsymbol{v}},\boldsymbol{w}\right\rangle \!\right\rangle \left(p\right)$
be defined by 
\[
\delta_{u}\left\langle \!\left\langle \boldsymbol{v},\boldsymbol{w}\right\rangle \!\right\rangle \left(p\right)=\lim_{\tau\rightarrow0}\frac{\left\langle \!\left\langle \left(\boldsymbol{v}\left(p+\overline{\tau u}\right)\right),\left(\boldsymbol{w}\left(p+\overline{\tau u}\right)\right)\right\rangle \!\right\rangle -\left\langle \!\left\langle \boldsymbol{v}\left(p\right),\boldsymbol{w}\left(p\right)\right\rangle \!\right\rangle }{\tau}.
\]
If the plain derivatives to $V$ $\delta_{u}\boldsymbol{v}\left(p\right)$
and $\delta_{u}\boldsymbol{w}\left(p\right)$ exist then $\delta_{u}\left\langle \!\left\langle \boldsymbol{v},\boldsymbol{w}\right\rangle \!\right\rangle \left(p\right)$
exists and 
\[
\delta_{u}\left\langle \!\left\langle \boldsymbol{v},\boldsymbol{w}\right\rangle \!\right\rangle \left(p\right)=\left\langle \!\left\langle \boldsymbol{v}\left(p\right),\delta_{u}\boldsymbol{w}\left(p\right)\right\rangle \!\right\rangle +\left\langle \!\left\langle \delta_{u}\boldsymbol{v}\left(p\right),\boldsymbol{w}\left(p\right)\right\rangle \!\right\rangle .
\]
\end{prop}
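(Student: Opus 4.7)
The plan is to mimic the proof of Proposition \ref{prop:26a}, but observe that in the plain-derivative setting the transport operator $P_\tau=p^{-1}\circ(p+\overline{\tau u})$ drops out entirely, which simplifies the bookkeeping considerably. The only algebraic device needed is the standard add-and-subtract trick applied to the bilinear form $\langle\!\langle\cdot,\cdot\rangle\!\rangle$, followed by two applications of the limit identity \eqref{eq:lim-lim}.

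First I would rewrite the numerator in the defining difference quotient by inserting $\pm\langle\!\langle\boldsymbol{v}(p+\overline{\tau u}),\boldsymbol{w}(p)\rangle\!\rangle$ and use bilinearity of $\langle\!\langle\cdot,\cdot\rangle\!\rangle$ to split it as
\[
\langle\!\langle\boldsymbol{v}(p+\overline{\tau u}),\,\boldsymbol{w}(p+\overline{\tau u})-\boldsymbol{w}(p)\rangle\!\rangle+\langle\!\langle\boldsymbol{v}(p+\overline{\tau u})-\boldsymbol{v}(p),\,\boldsymbol{w}(p)\rangle\!\rangle.
\]
Dividing by $\tau$ and pulling $1/\tau$ inside the appropriate slot of each term by bilinearity, the desired identity reduces to showing that
\[
\lim_{\tau\to 0}\bigl\langle\!\bigl\langle\boldsymbol{v}(p+\overline{\tau u}),\,\tfrac{1}{\tau}\bigl(\boldsymbol{w}(p+\overline{\tau u})-\boldsymbol{w}(p)\bigr)\bigr\rangle\!\bigr\rangle=\langle\!\langle\boldsymbol{v}(p),\delta_u\boldsymbol{w}(p)\rangle\!\rangle
\]
and the analogous statement for the other summand. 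Both are immediate consequences of \eqref{eq:lim-lim} together with the hypothesised existence of $\delta_u\boldsymbol{v}(p)$ and $\delta_u\boldsymbol{w}(p)$, \emph{provided} one knows that $\boldsymbol{v}(p+\overline{\tau u})\to\boldsymbol{v}(p)$ as $\tau\to 0$.

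This continuity statement is the only step requiring a small separate argument, and is the place where the proof genuinely differs from that of Proposition \ref{prop:26a}: there, the transport term forced the use of Lemma \ref{lem:29} (and hence of differentiability of the ambient pointwise affine space). Here no transport is present, and continuity follows directly from existence of the plain derivative, since
\[
\boldsymbol{v}(p+\overline{\tau u})-\boldsymbol{v}(p)=\tau\bigl(\delta_u\boldsymbol{v}(p)+o(1)\bigr)\longrightarrow 0\quad\text{as }\tau\to 0,
\]
and analogously for $\boldsymbol{w}$. The hypothesis that $A$ is differentiable is retained for uniformity with Proposition \ref{prop:26a} but plays no active role in this case.

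The main (indeed only) obstacle is purely notational: keeping track of which slot of $\langle\!\langle\cdot,\cdot\rangle\!\rangle$ absorbs the $1/\tau$ at each stage and correctly invoking \eqref{eq:lim-lim} for the continuous bilinear map once the two limits in each product are seen to exist. Once that is done, assembling the two halves yields $\langle\!\langle\boldsymbol{v}(p),\delta_u\boldsymbol{w}(p)\rangle\!\rangle+\langle\!\langle\delta_u\boldsymbol{v}(p),\boldsymbol{w}(p)\rangle\!\rangle$, establishing both the existence of $\delta_u\langle\!\langle\boldsymbol{v},\boldsymbol{w}\rangle\!\rangle(p)$ and the stated Leibniz identity.
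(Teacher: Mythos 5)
Your proof is correct and follows essentially the route the paper intends: the paper gives no explicit proof of this proposition, stating only that it ``can be proved similarly'' to Proposition \ref{prop:26a}, and your argument is exactly that adaptation (add-and-subtract, bilinearity, then the limit identity (\ref{eq:lim-lim})). Your additional observation that the differentiability hypothesis on $A$ is inert here --- since continuity of $\boldsymbol{v}\left(p+\overline{\tau u}\right)$ follows from the existence of $\delta_{u}\boldsymbol{v}\left(p\right)$ rather than from Lemma \ref{lem:29} --- is a correct and worthwhile refinement.
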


We note some concrete applications of Propositions \ref{prop:26a}
and \ref{prop:p30-1}.
\begin{itemize}
\item If $f$ and $g$ are scalar fields then $\left(f,g\right)\mapsto fg$,
where $fg\left(p\right)=f\left(p\right)g\left(p\right)$, is a bilinear
map, so if $\delta{}_{u}f$ and $\delta{}_{u}g$ exist then
\[
\delta{}_{u}\left(fg\right)\left(p\right)=f\left(\delta{}_{u}g\right)\left(p\right)+\left(\delta{}_{u}f\right)g\left(p\right).
\]
\item If $\boldsymbol{v}$ and $\boldsymbol{w}$ are vector fields then
the bilinear form $\left(\boldsymbol{v},\boldsymbol{w}\right)\mapsto\left\langle \boldsymbol{v},\boldsymbol{w}\right\rangle $,
where $\left\langle \boldsymbol{v},\boldsymbol{w}\right\rangle \left(p\right)=\left\langle \boldsymbol{v}\left(p\right),\boldsymbol{w}\left(p\right)\right\rangle $,
is a bilinear map, so if $\delta{}_{u}\boldsymbol{v}$ and $\delta{}_{u}\boldsymbol{w}$
exist then
\[
\delta{}_{u}\left\langle \boldsymbol{v},\boldsymbol{w}\right\rangle \left(p\right)=\left\langle \boldsymbol{v},\delta{}_{u}\boldsymbol{w}\right\rangle \left(p\right)+\left\langle \delta{}_{u}\boldsymbol{v},\boldsymbol{w}\right\rangle \left(p\right).
\]
\item If $\boldsymbol{v}$ and $\boldsymbol{w}$ are vector fields then
the cross product $\left(\boldsymbol{v},\boldsymbol{w}\right)\mapsto\boldsymbol{v}\times\boldsymbol{w}$,
where $\left(\boldsymbol{v}\times\boldsymbol{w}\right)\left(p\right)=\boldsymbol{v}\left(p\right)\times\boldsymbol{w}\left(p\right)$,
is a bilinear map, so if $\nabla_{u}\boldsymbol{v}$ and $\nabla_{u}\boldsymbol{w}$
exist then
\[
\nabla{}_{u}\left(\boldsymbol{v}\times\boldsymbol{w}\right)\left(p\right)=\left(\boldsymbol{v}\times\nabla_{u}\boldsymbol{w}\right)\left(p\right)+\left(\nabla_{u}\boldsymbol{v}\times\boldsymbol{w}\right)\left(p\right).
\]
\item If $\boldsymbol{v}$ and $\boldsymbol{w}$ are vector fields then
the tensor product $\left(\boldsymbol{v},\boldsymbol{w}\right)\mapsto\boldsymbol{v}\otimes\boldsymbol{w}$,
the exterior product $\left(\boldsymbol{v},\boldsymbol{w}\right)\mapsto\boldsymbol{v}\wedge\boldsymbol{w}$
and the Clifford product $\left(\boldsymbol{v},\boldsymbol{w}\right)\mapsto\boldsymbol{v}\boldsymbol{w}$
are bilinear maps, so if $\delta_{u}\boldsymbol{v}$ and $\delta_{u}\boldsymbol{w}$
exist then
\begin{gather*}
\delta_{u}\left(\boldsymbol{v}\otimes\boldsymbol{w}\right)\left(p\right)=\boldsymbol{v}\otimes\left(\delta_{u}\boldsymbol{w}\right)\left(p\right)+\left\langle \left(\delta_{u}\boldsymbol{v}\right)\otimes\boldsymbol{w}\right\rangle \left(p\right),\\
\delta_{u}\left(\boldsymbol{v}\wedge\boldsymbol{w}\right)\left(p\right)=\boldsymbol{v}\wedge\left(\delta_{u}\boldsymbol{w}\right)\left(p\right)+\left\langle \left(\delta_{u}\boldsymbol{v}\right)\wedge\boldsymbol{w}\right\rangle \left(p\right),\\
\delta_{u}\left(\boldsymbol{v}\boldsymbol{w}\right)\left(p\right)=\boldsymbol{v}\left(\delta_{u}\boldsymbol{w}\right)\left(p\right)+\left\langle \left(\delta_{u}\boldsymbol{v}\right)\boldsymbol{w}\right\rangle \left(p\right).
\end{gather*}
\end{itemize}

\subsection{Mixed derivatives (disparate vector fields)}

A product rule can even be applied to bilinear maps involving derivatives
of different kinds, using a \emph{mixed derivative} of disparate vector
fields, such as $\nabla\delta{}_{u}\left\langle \!\left\langle \boldsymbol{\varphi},\boldsymbol{v}\right\rangle \!\right\rangle $
below.
\begin{prop}
\label{prop:39}Consider a covector field $\varphi:A\rightarrow V^{*}$
and a vector field $\boldsymbol{v}:A\rightarrow V$, let 
\[
\left(\varphi,\boldsymbol{v}\right)\mapsto\left\langle \!\left\langle \varphi,\boldsymbol{v}\right\rangle \!\right\rangle 
\]
 be a bilinear map, and let the mixed derivative $\nabla\delta{}_{u}\left\langle \!\left\langle \boldsymbol{\varphi},\boldsymbol{v}\right\rangle \!\right\rangle \left(p\right)$
at $p$ be defined by
\[
\nabla\delta{}_{u}\left\langle \!\left\langle \varphi,\boldsymbol{v}\right\rangle \!\right\rangle \left(p\right)=\lim_{\tau\rightarrow0}\frac{\left\langle \!\left\langle \left(\varphi\left(p+\overline{\tau u}\right)\right),p^{-1}\circ\left(p+\overline{\tau u}\right)\left(\boldsymbol{v}\left(p+\overline{\tau u}\right)\right)\right\rangle \!\right\rangle -\left\langle \!\left\langle \varphi\left(p\right),\boldsymbol{v}\left(p\right)\right\rangle \!\right\rangle }{\tau}.
\]
 If the plain derivative to $V^{*}$ $\delta{}_{u}\varphi$ and the
reduced derivative $\nabla_{u}\boldsymbol{v}$ exist then $\nabla\partial_{u}\left\langle \!\left\langle \varphi,\boldsymbol{v}\right\rangle \!\right\rangle $
exists and
\[
\nabla\delta{}_{u}\left\langle \!\left\langle \varphi,\boldsymbol{v}\right\rangle \!\right\rangle \left(p\right)=\left\langle \!\left\langle \varphi,\nabla_{u}\boldsymbol{v}\right\rangle \!\right\rangle \left(p\right)+\left\langle \!\left\langle \delta{}_{u}\varphi,\boldsymbol{v}\right\rangle \!\right\rangle \left(p\right).
\]
\end{prop}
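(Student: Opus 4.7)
The proof will mirror the pattern established in Propositions~\ref{prop:26a} and \ref{prop:p30-1}: write out the difference quotient for $\nabla\delta{}_{u}\left\langle\!\left\langle \varphi,\boldsymbol{v}\right\rangle\!\right\rangle\left(p\right)$, insert a telescoping middle term, split into two pieces by bilinearity, and then push the limit inside each slot of the bilinear map using the continuity identity (\ref{eq:lim-lim}).

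Concretely, I set $P_{\tau}=p^{-1}\circ\left(p+\overline{\tau u}\right)$ and rewrite the numerator of the defining limit as
\[
\left\langle\!\left\langle \varphi\!\left(p+\overline{\tau u}\right),P_{\tau}\!\left(\boldsymbol{v}\!\left(p+\overline{\tau u}\right)\right)\right\rangle\!\right\rangle - \left\langle\!\left\langle \varphi\!\left(p+\overline{\tau u}\right),\boldsymbol{v}\!\left(p\right)\right\rangle\!\right\rangle + \left\langle\!\left\langle \varphi\!\left(p+\overline{\tau u}\right),\boldsymbol{v}\!\left(p\right)\right\rangle\!\right\rangle - \left\langle\!\left\langle \varphi\!\left(p\right),\boldsymbol{v}\!\left(p\right)\right\rangle\!\right\rangle,
\]
so that bilinearity collapses the first pair into $\left\langle\!\left\langle \varphi\!\left(p+\overline{\tau u}\right),P_{\tau}\!\left(\boldsymbol{v}\!\left(p+\overline{\tau u}\right)\right)-\boldsymbol{v}\!\left(p\right)\right\rangle\!\right\rangle$ and the second into $\left\langle\!\left\langle \varphi\!\left(p+\overline{\tau u}\right)-\varphi\!\left(p\right),\boldsymbol{v}\!\left(p\right)\right\rangle\!\right\rangle$. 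After dividing by $\tau$, the first quotient converges (by the hypothesis that $\nabla_{u}\boldsymbol{v}\left(p\right)$ exists together with the left-slot continuity to be addressed below) to $\left\langle\!\left\langle \varphi\!\left(p\right),\nabla_{u}\boldsymbol{v}\!\left(p\right)\right\rangle\!\right\rangle$, while the second converges to $\left\langle\!\left\langle \delta_{u}\varphi\!\left(p\right),\boldsymbol{v}\!\left(p\right)\right\rangle\!\right\rangle$ directly from the definition of $\delta_{u}\varphi\!\left(p\right)$.

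The nontrivial input, and the step that most deserves attention, is passing $\lim_{\tau\to 0}\varphi\!\left(p+\overline{\tau u}\right)=\varphi\!\left(p\right)$ to the outside so that (\ref{eq:lim-lim}) applies cleanly. Unlike Proposition~\ref{prop:26a}, where Lemma~\ref{lem:29} handled the analogous transport-pullback on the \emph{vector} side, here the left slot carries no transport and I need continuity of $\varphi$ at $p$. This follows at once from the existence of $\delta_{u}\varphi\!\left(p\right)=\lim_{\tau\to0}\tau^{-1}\left(\varphi\!\left(p+\overline{\tau u}\right)-\varphi\!\left(p\right)\right)$: if the difference quotient has a finite limit, then the difference itself tends to $0$. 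With that in hand, (\ref{eq:lim-lim}) moves the limits inside both slots of $\left\langle\!\left\langle\cdot,\cdot\right\rangle\!\right\rangle$ in each of the two summands.

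The main potential obstacle is purely bookkeeping: keeping track of which side carries the transport $P_{\tau}$ and which does not, so that the splitting is consistent with the \emph{definition} of $\nabla\delta_{u}$ (transport only on $\boldsymbol{v}$) and the two single-variable derivatives $\nabla_{u}\boldsymbol{v}$ and $\delta_{u}\varphi$ invoked on the right-hand side. Once the telescoping is written in the asymmetric form above, all three quotients match their intended definitions verbatim, and the conclusion $\nabla\delta{}_{u}\left\langle\!\left\langle \varphi,\boldsymbol{v}\right\rangle\!\right\rangle\!\left(p\right)=\left\langle\!\left\langle \varphi,\nabla_{u}\boldsymbol{v}\right\rangle\!\right\rangle\!\left(p\right)+\left\langle\!\left\langle \delta_{u}\varphi,\boldsymbol{v}\right\rangle\!\right\rangle\!\left(p\right)$ is immediate.
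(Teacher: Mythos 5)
Your proposal is correct and follows essentially the same route as the paper's proof: the same telescoping middle term $\left\langle \!\left\langle \varphi\left(p+\overline{\tau u}\right),\boldsymbol{v}\left(p\right)\right\rangle \!\right\rangle$, the same bilinear splitting, and the same use of (\ref{eq:lim-lim}) to push the limits into each slot. Your explicit justification that $\lim_{\tau\rightarrow0}\varphi\left(p+\overline{\tau u}\right)=\varphi\left(p\right)$ follows from the existence of $\delta_{u}\varphi\left(p\right)$ is a small but welcome addition; the paper instead cites Lemma \ref{lem:29}, which addresses the transported vector slot rather than the untransported covector slot, so your observation actually pinpoints the continuity fact that is really needed there.
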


\begin{proof}
Set $P_{\tau}=p^{-1}\circ\left(p+\overline{\tau u}\right)$. Using
(\ref{eq:lim-lim}) and Lemma \ref{lem:29}, we have
\begin{align*}
\nabla\delta{}_{u}\left\langle \!\left\langle \varphi,\boldsymbol{v}\right\rangle \!\right\rangle \left(p\right) & =\lim\nolimits_{\tau\rightarrow0}\nicefrac{1}{\tau}\left(\left\langle \!\left\langle \left(\varphi\left(p+\overline{\tau u}\right)\right),P_{\tau}\left(\boldsymbol{v}\left(p+\overline{\tau u}\right)\right)\right\rangle \!\right\rangle -\left\langle \!\left\langle \varphi\left(p\right),\boldsymbol{v}\left(p\right)\right\rangle \!\right\rangle \right)\\
 & =\lim\nolimits_{\tau\rightarrow0}\nicefrac{1}{\tau}\left(\left\langle \!\left\langle \left(\varphi\left(p+\overline{\tau u}\right)\right),P_{\tau}\left(\boldsymbol{v}\left(p+\overline{\tau u}\right)\right)\right\rangle \!\right\rangle -\left\langle \!\left\langle \left(\varphi\left(p+\overline{\tau u}\right)\right),\boldsymbol{v}\left(p\right)\right\rangle \!\right\rangle \right.\\
 & \qquad\qquad\qquad\,\left.+\left\langle \!\left\langle \left(\varphi\left(p+\overline{\tau u}\right)\right),\boldsymbol{v}\left(p\right)\right\rangle \!\right\rangle -\left\langle \!\left\langle \varphi\left(p\right),\boldsymbol{v}\left(p\right)\right\rangle \!\right\rangle \right)\\
 & =\lim\nolimits_{\tau\rightarrow0}\nicefrac{1}{\tau}\left\langle \!\left\langle \left(\varphi\left(p+\overline{\tau u}\right)\right),P_{\tau}\left(\boldsymbol{v}\left(p+\overline{\tau u}\right)\right)-\boldsymbol{v}\left(p\right)\right\rangle \!\right\rangle \\
 & +\lim\nolimits_{\tau\rightarrow0}\nicefrac{1}{\tau}\left\langle \!\left\langle \left(\varphi\left(p+\overline{\tau u}\right)-\varphi\left(p\right)\right),\boldsymbol{v}\left(p\right)\right\rangle \!\right\rangle \\
 & =\lim\nolimits_{\tau\rightarrow0}\left\langle \!\left\langle \left(\varphi\left(p+\overline{\tau u}\right)\right),\nicefrac{1}{\tau}\left(P_{\tau}\left(\boldsymbol{v}\left(p+\overline{\tau u}\right)\right)-\boldsymbol{v}\left(p\right)\right)\right\rangle \!\right\rangle \\
 & +\lim\nolimits_{\tau\rightarrow0}\left\langle \!\left\langle \nicefrac{1}{\tau}\left(\left(\varphi\left(p+\overline{\tau u}\right)-\varphi\left(p\right)\right)\right),\boldsymbol{v}\left(p\right)\right\rangle \!\right\rangle \\
 & =\left\langle \!\left\langle \lim\nolimits_{\tau\rightarrow0}\left(\varphi\left(p+\overline{\tau u}\right)\right),\lim\nolimits_{\tau\rightarrow0}\nicefrac{1}{\tau}\left(P_{\tau}\left(\boldsymbol{v}\left(p+\overline{\tau u}\right)\right)-\boldsymbol{v}\left(p\right)\right)\right\rangle \!\right\rangle \\
 & +\left\langle \!\left\langle \lim\nolimits_{\tau\rightarrow0}\nicefrac{1}{\tau}\left(\left(\varphi\left(p+\overline{\tau u}\right)-\varphi\left(p\right)\right)\right),\boldsymbol{v}\left(p\right)\right\rangle \!\right\rangle \\
 & =\left(\left\langle \!\left\langle \varphi\left(p\right),\nabla_{u}\boldsymbol{v}\left(p\right)\right\rangle \!\right\rangle +\left\langle \!\left\langle \delta{}_{u}\varphi\left(p\right),\boldsymbol{v}\left(p\right)\right\rangle \!\right\rangle \right).\qedhere
\end{align*}
\end{proof}
\begin{cor}
\label{cor:40}We have 
\begin{equation}
\delta{}_{u}\left(\varphi\left(\boldsymbol{v}\right)\right)\left(p\right)=\varphi\left(\nabla_{u}\boldsymbol{v}\right)\left(p\right)+\left(\delta_{u}\varphi\right)\left(\boldsymbol{v}\right)\left(p\right).\label{eq:c40}
\end{equation}
\end{cor}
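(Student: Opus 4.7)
The plan is to recognize \eqref{eq:c40} as the instance of Proposition \ref{prop:39} obtained when the bilinear map is the natural pairing $(\varphi,v)\mapsto\varphi(v)$, viewed as a bilinear form $V^{*}\times V\to K$. Setting $\left\langle\!\left\langle \varphi,v\right\rangle\!\right\rangle:=\varphi(v)$ and applying Proposition \ref{prop:39} directly yields
\[
\nabla\delta_{u}\left\langle\!\left\langle \varphi,\boldsymbol{v}\right\rangle\!\right\rangle(p)=\varphi(\nabla_{u}\boldsymbol{v})(p)+(\delta_{u}\varphi)(\boldsymbol{v})(p),
\]
which is already the right-hand side of \eqref{eq:c40}.

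The main---indeed essentially the only---step is then to identify the left-hand side of the display above with $\delta_{u}(\varphi(\boldsymbol{v}))(p)$. The intuition is that $\varphi(\boldsymbol{v})$ is a scalar field on $A$, and the target $K$ carries only the trivial pointwise affine structure $\boldsymbol{\mathfrak{ci}}_{K}$, so the pseudo-action $p^{-1}\circ(p+\overline{\tau u})$ that the mixed derivative $\nabla\delta_{u}$ places on the vector slot contributes nothing to the scalar output of the natural pairing. Concretely, the mixed difference quotient
\[
\frac{\varphi(p+\overline{\tau u})\bigl(p^{-1}\circ(p+\overline{\tau u})(\boldsymbol{v}(p+\overline{\tau u}))\bigr)-\varphi(p)(\boldsymbol{v}(p))}{\tau}
\]
should collapse in the limit $\tau\to 0$ to the plain difference quotient defining $\delta_{u}(\varphi(\boldsymbol{v}))(p)$. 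Verifying this collapse carefully is the main obstacle and the only point that requires bookkeeping; once it is granted, the corollary follows at once from Proposition \ref{prop:39}.
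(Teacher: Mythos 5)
Your proposal is essentially identical to the paper's own proof: the paper likewise observes that $\nabla\delta{}_{u}\left(\varphi\left(\boldsymbol{v}\right)\right)$, $\varphi\left(\nabla_{u}\boldsymbol{v}\right)$ and $\left(\delta_{u}\varphi\right)\left(\boldsymbol{v}\right)$ arise from a bilinear pairing of the form $\left(\varphi,\boldsymbol{v}\right)\mapsto\left\langle \!\left\langle \varphi,\boldsymbol{v}\right\rangle \!\right\rangle$ so that Proposition \ref{prop:39} applies, and then notes that since $\varphi\left(\boldsymbol{v}\right)$ is a scalar field the mixed derivative $\nabla\delta{}_{u}\left(\varphi\left(\boldsymbol{v}\right)\right)$ reduces to the plain derivative $\delta{}_{u}\left(\varphi\left(\boldsymbol{v}\right)\right)$ to $K$. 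The one step you flag as the main obstacle --- the collapse of the mixed difference quotient to the plain one --- is precisely the step the paper dispatches in a single sentence without further computation, so your account matches the paper's proof in both substance and level of detail.
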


\begin{proof}
$\nabla\delta{}_{u}\left(\varphi\left(\boldsymbol{v}\right)\right)$,
$\varphi\left(\nabla_{u}\boldsymbol{v}\right)$ and $\left(\delta_{u}\varphi\right)\left(\boldsymbol{v}\right)$
give bilinear maps of the form $\left(\varphi,\boldsymbol{v}\right)\mapsto\left\langle \!\left\langle \varphi,\boldsymbol{v}\right\rangle \!\right\rangle $,
so Proposition \ref{prop:39} applies. Also, the field $\varphi\left(\boldsymbol{v}\right)$
reduces to a scalar function, so the mixed derivative $\nabla\delta{}_{u}\left(\varphi\left(\boldsymbol{v}\right)\right)$
reduces to $\delta{}_{u}\left(\varphi\left(\boldsymbol{v}\right)\right)$,
a plain derivative to $K$.
\end{proof}
The following result is proved in the same way as Proposition \ref{prop:39}.
\begin{prop}
Let $\varphi$ and $\left(\varphi,\boldsymbol{v}\right)\mapsto\left\langle \!\left\langle \varphi,\boldsymbol{v}\right\rangle \!\right\rangle $
be defined as in Proposition \ref{prop:39}, and let the mixed derivative
$\triangledown\partial\left\langle \!\left\langle \boldsymbol{\varphi},\boldsymbol{v}\right\rangle \!\right\rangle \left(p\right)$
at $p$ be defined by
\[
\triangledown\partial\left\langle \!\left\langle \boldsymbol{\varphi},\boldsymbol{v}\right\rangle \!\right\rangle \left(p\right)=\lim_{\tau\rightarrow0}\frac{\left\langle \!\left\langle \left(\varphi\left(p+\tau u\right)\right),p^{-1}\circ\left(p+\overline{\tau u}\right)\left(\boldsymbol{v}\left(p+\tau u\right)\right)\right\rangle \!\right\rangle -\left\langle \!\left\langle \varphi\left(p\right),\boldsymbol{v}\left(p\right)\right\rangle \!\right\rangle }{\tau}.
\]
 If the plain derivative to $V^{*}$ $\delta{}_{u}\varphi$ and the
reduced derivative $\nabla_{u}\boldsymbol{v}$ exist then $\triangledown\partial\left\langle \!\left\langle \boldsymbol{\varphi},\boldsymbol{v}\right\rangle \!\right\rangle $
exists and
\[
\triangledown\partial\left\langle \!\left\langle \boldsymbol{\varphi},\boldsymbol{v}\right\rangle \!\right\rangle \left(p\right)=\left\langle \!\left\langle \varphi,\triangledown{}_{u}\boldsymbol{v}\right\rangle \!\right\rangle \left(p\right)+\left\langle \!\left\langle \partial{}_{u}\varphi,\boldsymbol{v}\right\rangle \!\right\rangle \left(p\right).
\]
\end{prop}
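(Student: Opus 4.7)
The plan is to mirror the argument of Proposition~\ref{prop:39}, with the only differences coming from the replacement of $p+\overline{\tau u}$ by $p+\tau u$ in the field arguments and from the corresponding change in which derivative symbols apply. Abbreviate $P_\tau := p^{-1}\circ(p+\overline{\tau u})$, exactly as in the earlier proof, so the numerator in the definition of $\triangledown\partial\langle\!\langle\varphi,\boldsymbol{v}\rangle\!\rangle(p)$ reads
\[
\bigl\langle\!\bigl\langle \varphi(p+\tau u),\, P_\tau\bigl(\boldsymbol{v}(p+\tau u)\bigr)\bigr\rangle\!\bigr\rangle
-\bigl\langle\!\bigl\langle \varphi(p),\,\boldsymbol{v}(p)\bigr\rangle\!\bigr\rangle .
\]
The central algebraic step is to add and subtract $\langle\!\langle\varphi(p+\tau u),\boldsymbol{v}(p)\rangle\!\rangle$ and then exploit bilinearity of $\langle\!\langle\cdot,\cdot\rangle\!\rangle$ in each slot to split the quotient by $\tau$ into two summands: one with $\varphi(p+\tau u)$ in the first slot and $\tfrac{1}{\tau}(P_\tau(\boldsymbol{v}(p+\tau u))-\boldsymbol{v}(p))$ in the second, the other with $\tfrac{1}{\tau}(\varphi(p+\tau u)-\varphi(p))$ in the first slot and $\boldsymbol{v}(p)$ in the second.

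Next I would take the $\tau\to 0$ limit on each summand separately. The second summand converges, by definition~(\ref{eq:p1-1-1}) of $\partial_u\varphi(p)$, to $\langle\!\langle\partial_u\varphi(p),\boldsymbol{v}(p)\rangle\!\rangle$. For the first summand I invoke the continuity property~(\ref{eq:lim-lim}) of bilinear maps, so the limit factors as $\langle\!\langle\lim_{\tau\to 0}\varphi(p+\tau u),\,\lim_{\tau\to 0}\tfrac{1}{\tau}(P_\tau(\boldsymbol{v}(p+\tau u))-\boldsymbol{v}(p))\rangle\!\rangle$. The inner factor in the second slot is precisely $\triangledown_u\boldsymbol{v}(p)$ by definition~(\ref{eq:p2-1}), so the proposition will follow once I know that $\lim_{\tau\to 0}\varphi(p+\tau u)=\varphi(p)$.

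This continuity of $\varphi$ along the ray $p+\tau u$ is the only real obstacle, and it is settled by the hypothesis that $\partial_u\varphi$ exists: writing $\varphi(p+\tau u)-\varphi(p)=\tau\cdot\tfrac{1}{\tau}(\varphi(p+\tau u)-\varphi(p))$ and letting $\tau\to 0$, the right‑hand side tends to $0\cdot\partial_u\varphi(p)=0$. Similarly, as in Lemma~\ref{lem:29}, the existence of $\triangledown_u\boldsymbol{v}(p)$ guarantees $\lim_{\tau\to 0}P_\tau(\boldsymbol{v}(p+\tau u))=\boldsymbol{v}(p)$, which is implicitly what legitimises the use of~(\ref{eq:lim-lim}) in the first summand (one may verify this plain‑space analogue of Lemma~\ref{lem:29} by the same splitting as in that lemma, using the continuity property~(\ref{eq:cont})). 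Assembling the two limits then yields
\[
\triangledown\partial\bigl\langle\!\bigl\langle\varphi,\boldsymbol{v}\bigr\rangle\!\bigr\rangle(p)
=\bigl\langle\!\bigl\langle \varphi,\triangledown_u\boldsymbol{v}\bigr\rangle\!\bigr\rangle(p)
+\bigl\langle\!\bigl\langle \partial_u\varphi,\boldsymbol{v}\bigr\rangle\!\bigr\rangle(p),
\]
as required. I expect no further subtleties; the proof is mechanical once the two continuity statements above are in place.
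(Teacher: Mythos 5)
Your proof is correct and follows essentially the same route as the paper, which simply remarks that this proposition ``is proved in the same way as Proposition \ref{prop:39}'': add and subtract the cross term $\left\langle \!\left\langle \varphi\left(p+\tau u\right),\boldsymbol{v}\left(p\right)\right\rangle \!\right\rangle $, split by bilinearity, and pass to the limit using (\ref{eq:lim-lim}) together with the Lemma~\ref{lem:29}-type continuity statements. Your explicit justification that $\lim_{\tau\rightarrow0}\varphi\left(p+\tau u\right)=\varphi\left(p\right)$ follows from the existence of $\partial_{u}\varphi$ is a welcome detail that the paper leaves implicit.
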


\begin{cor}
\label{cor:40-1}We have 
\begin{equation}
\partial{}_{u}\left(\varphi\left(\boldsymbol{v}\right)\right)\left(p\right)=\varphi\left(\triangledown{}_{u}\boldsymbol{v}\right)\left(p\right)+\left(\partial{}_{u}\varphi\right)\left(\boldsymbol{v}\right)\left(p\right).\label{eq:c40-1}
\end{equation}
\end{cor}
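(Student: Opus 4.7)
The plan is to replicate the argument for Corollary \ref{cor:40} step by step, substituting the preceding proposition (the plain-derivative-on-vector-spaces analogue of Proposition \ref{prop:39}) for Proposition \ref{prop:39}, and substituting $\partial_{u}$ for $\delta_{u}$ and $\triangledown_{u}$ for $\nabla_{u}$ throughout.

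First, I would note that the canonical evaluation pairing $(\varphi,\boldsymbol{v})\mapsto\varphi(\boldsymbol{v})$ is a bilinear map $V^{*}\times V\to K$, and therefore fits the template $(\varphi,\boldsymbol{v})\mapsto\langle\!\langle\varphi,\boldsymbol{v}\rangle\!\rangle$ required by the preceding proposition. Taking $\langle\!\langle\varphi,\boldsymbol{v}\rangle\!\rangle:=\varphi(\boldsymbol{v})$ in that proposition yields, with no further work,
\[
\triangledown\partial\langle\!\langle\varphi,\boldsymbol{v}\rangle\!\rangle(p)=\varphi(\triangledown_{u}\boldsymbol{v})(p)+(\partial_{u}\varphi)(\boldsymbol{v})(p).
\]

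Second, I would verify that the left-hand side coincides with $\partial_{u}(\varphi(\boldsymbol{v}))(p)$. The field $\varphi(\boldsymbol{v})$ takes values in $K$, which carries the trivial affine action field $\boldsymbol{\mathfrak{ci}}_{K}$, so the translation-correction factor $p^{-1}\circ(p+\overline{\tau u})$ appearing in the inner slot of the mixed derivative acts as the identity on the scalar values $\varphi(p+\tau u)(\boldsymbol{v}(p+\tau u))$. Hence the limit defining $\triangledown\partial\langle\!\langle\varphi,\boldsymbol{v}\rangle\!\rangle(p)$ collapses to the ordinary scalar difference quotient of (\ref{eq:p1-1}), namely $\partial_{u}(\varphi(\boldsymbol{v}))(p)$. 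Combining the two observations gives (\ref{eq:c40-1}).

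The only point that warrants explicit mention is the collapse of $\triangledown\partial$ to $\partial$ on scalar-valued expressions; but this is exactly the same reduction already employed, without detailed justification, at the end of the proof of Corollary \ref{cor:40}, where $\nabla\delta_{u}(\varphi(\boldsymbol{v}))$ was identified with $\delta_{u}(\varphi(\boldsymbol{v}))$. No genuinely new computation is required.
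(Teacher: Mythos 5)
Your route is the one the paper intends: Corollary \ref{cor:40-1} is meant to follow from the proposition immediately preceding it exactly as Corollary \ref{cor:40} follows from Proposition \ref{prop:39}, by taking $\left\langle \!\left\langle \varphi,\boldsymbol{v}\right\rangle \!\right\rangle :=\varphi\left(\boldsymbol{v}\right)$ and then identifying the mixed derivative of the resulting scalar field with the plain derivative $\partial_{u}$. So in terms of strategy you have matched the paper.

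However, the justification you give for the collapse step is not correct, and that step is precisely where the difficulty lies. In the definition of $\triangledown\partial\left\langle \!\left\langle \varphi,\boldsymbol{v}\right\rangle \!\right\rangle \left(p\right)$ the correction factor $P_{\tau}=p^{-1}\circ\left(p+\overline{\tau u}\right)$ is applied to the vector $\boldsymbol{v}\left(p+\tau u\right)\in V$ \emph{inside} the second slot of the pairing, before the pairing is evaluated; it is never applied to the scalar value $\varphi\left(p+\tau u\right)\left(\boldsymbol{v}\left(p+\tau u\right)\right)$, so the triviality of the affine action field on $K$ is beside the point. Concretely, the numerator of the mixed difference quotient is $\varphi\left(p+\tau u\right)\left(P_{\tau}\left(\boldsymbol{v}\left(p+\tau u\right)\right)\right)-\varphi\left(p\right)\left(\boldsymbol{v}\left(p\right)\right)$, which differs from the plain numerator defining $\partial_{u}\left(\varphi\left(\boldsymbol{v}\right)\right)\left(p\right)$ by
\begin{equation*}
\varphi\left(p+\tau u\right)\left(P_{\tau}\left(\boldsymbol{v}\left(p+\tau u\right)\right)-\boldsymbol{v}\left(p+\tau u\right)\right),
\end{equation*}
and dividing by $\tau$ and letting $\tau\rightarrow0$ this term tends to $\varphi\left(\Delta_{u}\boldsymbol{v}\right)\left(p\right)$, which does not vanish in a curved space. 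So the identification $\triangledown\partial\left\langle \!\left\langle \varphi,\boldsymbol{v}\right\rangle \!\right\rangle \left(p\right)=\partial_{u}\left(\varphi\left(\boldsymbol{v}\right)\right)\left(p\right)$ needs an argument that your proof does not supply (and, on the face of it, cannot supply without an extra hypothesis such as $\varphi\left(\Delta_{u}\boldsymbol{v}\right)\left(p\right)=0$). To be fair, the paper's own proof of Corollary \ref{cor:40} makes exactly the same unexamined reduction, so you are faithfully reproducing the intended argument; but since you single this step out as the one deserving explicit mention, you should be aware that the reason you offer for it is a non sequitur and that the step itself is the genuine gap.
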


Without proof, we give two more specific product rules for mixed derivatives.
\begin{prop}
\label{prop:87}If $\phi$ is a scalar field, $\boldsymbol{v}$ is
a vector field, and $\delta{}_{u}\phi$ and $\nabla_{u}\boldsymbol{v}$
exist then $\nabla\delta{}_{u}\left(\phi\boldsymbol{v}\right)\left(p\right)$
exists and 
\[
\nabla\delta{}_{u}\left(\phi\boldsymbol{v}\right)\left(p\right)=\nabla{}_{u}\left(\phi\boldsymbol{v}\right)\left(p\right)=\phi\left(\nabla_{u}\boldsymbol{v}\right)\left(p\right)+\left(\delta{}_{u}\phi\right)\boldsymbol{v}\left(p\right).
\]
\end{prop}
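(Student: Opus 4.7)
The plan is to exploit the linearity of the map $P_\tau := p^{-1}\circ(p+\overline{\tau u})$, which is a composition of vector-space isomorphisms. Since $P_\tau(\lambda w)=\lambda P_\tau(w)$ for any scalar $\lambda$ and vector $w$, we have
\[
P_\tau\bigl(\phi(p+\overline{\tau u})\,\boldsymbol{v}(p+\overline{\tau u})\bigr)=\phi(p+\overline{\tau u})\,P_\tau\bigl(\boldsymbol{v}(p+\overline{\tau u})\bigr).
\]
Feeding this into the definition of $\nabla_u(\phi\boldsymbol{v})(p)$ gives exactly the difference quotient that appears in the definition of the mixed derivative $\nabla\delta_u(\phi\boldsymbol{v})(p)$, established analogously to Proposition \ref{prop:39}. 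This proves the first equality $\nabla\delta_u(\phi\boldsymbol{v})(p)=\nabla_u(\phi\boldsymbol{v})(p)$ as soon as we know the right-hand limit exists.

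For the product-rule identity itself, I would follow the add-and-subtract pattern used in the proofs of Propositions \ref{prop:26a} and \ref{prop:39}. Starting from
\[
\nabla_u(\phi\boldsymbol{v})(p)=\lim_{\tau\to 0}\frac{\phi(p+\overline{\tau u})\,P_\tau\bigl(\boldsymbol{v}(p+\overline{\tau u})\bigr)-\phi(p)\boldsymbol{v}(p)}{\tau},
\]
I insert $\pm\,\phi(p+\overline{\tau u})\boldsymbol{v}(p)$ in the numerator and split the resulting expression as
\[
\phi(p+\overline{\tau u})\cdot\frac{P_\tau(\boldsymbol{v}(p+\overline{\tau u}))-\boldsymbol{v}(p)}{\tau}\;+\;\frac{\phi(p+\overline{\tau u})-\phi(p)}{\tau}\,\boldsymbol{v}(p).
\]
Passing to the limit using (\ref{eq:lim-lim}), the second summand tends to $(\delta_u\phi)(p)\,\boldsymbol{v}(p)$ by (\ref{eq:scalder}). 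For the first summand, the difference-quotient factor tends to $\nabla_u\boldsymbol{v}(p)$ by (\ref{eq:d26-2}), and the continuity factor $\phi(p+\overline{\tau u})$ tends to $\phi(p)$ because the existence of $\delta_u\phi(p)$ forces $\phi(p+\overline{\tau u})-\phi(p)\to 0$ (the same argument as in Lemma \ref{lem:29}, but applied to the scalar field $\phi$ with the plain derivative replacing the reduced one). Combining the two pieces yields the claimed expression.

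The step that requires the most care is the continuity of $\phi$ along $\tau\mapsto p+\overline{\tau u}$: one must observe that existence of $\delta_u\phi(p)$ already implies $\lim_{\tau\to 0}\phi(p+\overline{\tau u})=\phi(p)$, so no extra hypothesis is needed. Once that observation is in place, the two applications of (\ref{eq:lim-lim}) are routine and the whole argument parallels the proofs of Propositions \ref{prop:26a}, \ref{prop:p30-1} and \ref{prop:39}, with scalar multiplication playing the role of the bilinear pairing $\langle\!\langle\cdot,\cdot\rangle\!\rangle$. Existence of $\nabla\delta_u(\phi\boldsymbol{v})(p)$ then follows as a by-product of the existence of both limits on the right-hand side.
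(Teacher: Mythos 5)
The paper states this proposition explicitly ``without proof,'' so there is nothing to compare against; what matters is whether your argument stands on its own, and it does. Your two-step structure is exactly right. The first equality $\nabla\delta_{u}\left(\phi\boldsymbol{v}\right)\left(p\right)=\nabla_{u}\left(\phi\boldsymbol{v}\right)\left(p\right)$ does follow from the linearity of $P_{\tau}=\boldsymbol{\mathfrak{a}}\left(p\right)^{-1}\circ\boldsymbol{\mathfrak{a}}\left(p+\overline{\tau u}\right)$, since pulling the scalar $\phi\left(p+\overline{\tau u}\right)$ through $P_{\tau}$ turns the difference quotient defining the reduced derivative of the product field into the one defining the mixed derivative; this is the one ingredient not already present in the proofs of Propositions \ref{prop:26a} and \ref{prop:39}, and it is the reason the proposition can assert both equalities at once. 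The second step is the same add-and-subtract decomposition the paper uses for those propositions, with the bilinear pairing $\left\langle \!\left\langle -,-\right\rangle \!\right\rangle$ specialized to scalar multiplication $K\times V\rightarrow V$, and your observation that existence of $\delta_{u}\phi\left(p\right)$ already yields $\lim_{\tau\rightarrow0}\phi\left(p+\overline{\tau u}\right)=\phi\left(p\right)$ (via $\phi\left(p+\overline{\tau u}\right)-\phi\left(p\right)=\tau\cdot\frac{\phi\left(p+\overline{\tau u}\right)-\phi\left(p\right)}{\tau}\rightarrow0$) correctly discharges the continuity hypothesis needed to apply (\ref{eq:lim-lim}), playing the role that Lemma \ref{lem:29} plays in Proposition \ref{prop:26a}. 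The limits of both summands exist by hypothesis, so the existence claim follows as you say. This is a complete and correct proof of the omitted result.
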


\begin{cor}
\label{prop:33}If $\phi$ is a scalar field, $\boldsymbol{v}$ is
a vector field, and $\partial{}_{u}\phi$ and $\triangledown_{u}\boldsymbol{v}$
exist then $\triangledown\partial_{u}\left(\phi\boldsymbol{v}\right)\left(p\right)$
exists and 
\[
\triangledown\partial_{u}\left(\phi\boldsymbol{v}\right)\left(p\right)=\triangledown{}_{u}\left(\phi\boldsymbol{v}\right)\left(p\right)=\phi\left(\triangledown_{u}\boldsymbol{v}\right)\left(p\right)+\left(\partial{}_{u}\phi\right)\boldsymbol{v}\left(p\right).
\]
\end{cor}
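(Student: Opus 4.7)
The plan is to obtain Corollary \ref{prop:33} as the plain-vector-space counterpart of Proposition \ref{prop:87}, by combining the bilinearity of scalar multiplication with the mixed-derivative product rule stated just above Corollary \ref{cor:40-1}.

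First I would recognize that pointwise scalar multiplication $(\phi,\boldsymbol{v})\mapsto\phi\boldsymbol{v}$, where $(\phi\boldsymbol{v})(p)=\phi(p)\boldsymbol{v}(p)$, is a bilinear map. Applying the mixed-derivative product rule in the plain-vector-space setting (the proposition preceding Corollary \ref{cor:40-1}) to this bilinear map, with $\phi$ differentiated by $\partial_{u}$ and $\boldsymbol{v}$ by $\triangledown_{u}$, immediately yields the rightmost equality
\[
\triangledown\partial_{u}(\phi\boldsymbol{v})(p)=\phi(p)\bigl(\triangledown_{u}\boldsymbol{v}\bigr)(p)+\bigl(\partial_{u}\phi\bigr)(p)\,\boldsymbol{v}(p).
\]
This step uses only bilinearity of the product, the hypothesized existence of $\partial_{u}\phi$ and $\triangledown_{u}\boldsymbol{v}$, and continuity of the transport in the sense of Lemma \ref{lem:29} together with the limit identity (\ref{eq:lim-lim}), exactly as in the proof of Proposition \ref{prop:39}.

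Next I would establish the first equality $\triangledown\partial_{u}(\phi\boldsymbol{v})(p)=\triangledown_{u}(\phi\boldsymbol{v})(p)$ by comparing the two definitions termwise. By the definition of the mixed derivative and the definition of the reduced plain derivative $\triangledown_{u}$ in Section \ref{subsec:74}(d), we have
\[
\triangledown\partial_{u}(\phi\boldsymbol{v})(p)=\lim_{\tau\to0}\frac{\phi(p+\tau u)\bigl[p^{-1}\circ(p+\overline{\tau u})(\boldsymbol{v}(p+\tau u))\bigr]-\phi(p)\boldsymbol{v}(p)}{\tau}
\]
and
\[
\triangledown_{u}(\phi\boldsymbol{v})(p)=\lim_{\tau\to0}\frac{p^{-1}\circ(p+\overline{\tau u})\bigl(\phi(p+\tau u)\,\boldsymbol{v}(p+\tau u)\bigr)-\phi(p)\boldsymbol{v}(p)}{\tau}.
\]
Since $p^{-1}\circ(p+\overline{\tau u})$ is a composition of the vector-space isomorphisms $\boldsymbol{\mathfrak{a}}(p)^{-1}$ and $\boldsymbol{\mathfrak{a}}(p+\overline{\tau u})$, it is $K$-linear, so the scalar factor $\phi(p+\tau u)$ commutes with the transport; the numerators therefore coincide for every $\tau$, and the two limits agree.

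I do not anticipate any real obstacle. The only point to check carefully is that the scalar field $\phi$ is legitimately playing the role formally assigned to a covector field $\varphi$ in the general mixed-product rule; this is fine because that proof depends only on bilinearity of the pairing and on the existence and linearity of the component derivatives, neither of which is specific to a covector-vector duality pairing.
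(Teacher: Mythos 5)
Your proposal is correct. The paper actually states both Proposition \ref{prop:87} and this corollary ``without proof,'' so there is no official argument to compare against; your derivation is precisely the one the surrounding text implies. Specializing the mixed-derivative product rule (the $\triangledown\partial$ analogue of Proposition \ref{prop:39}) to the bilinear map of scalar multiplication $K\times V\rightarrow V$ gives the rightmost equality, and your observation that $p^{-1}\circ\left(p+\overline{\tau u}\right)$ is $K$-linear, so that the scalar factor $\phi\left(p+\tau u\right)$ passes through the transport and makes the numerators of $\triangledown\partial_{u}\left(\phi\boldsymbol{v}\right)\left(p\right)$ and $\triangledown_{u}\left(\phi\boldsymbol{v}\right)\left(p\right)$ agree termwise, is a correct and worthwhile justification of the first equality, which the paper merely asserts. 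The one hypothesis you lean on implicitly is the differentiability of the pointwise affine space (needed for Lemma \ref{lem:29} and for $\lim_{\tau\rightarrow0}\phi\left(p+\tau u\right)=\phi\left(p\right)$, the latter following from the existence of $\partial_{u}\phi$); this is consistent with the level of rigor in the paper's own proofs of Propositions \ref{prop:26a} and \ref{prop:39}.
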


\section{Sample applications}

\subsection{The gradient}
\begin{prop}
Let $V$ be a vector space with an inner product (non-degenerate real
bilinear form) $\left\langle -,-\right\rangle $, and consider a scalar
function $\phi:V\rightarrow K$. Then there is a unique function
\[
\nabla\phi:V\rightarrow V,\qquad x\mapsto\nabla\phi\left(x\right)
\]
 such that $\left\langle \nabla\phi\left(x\right),u\right\rangle =\partial{}_{u}\phi\left(x\right)$
for all $u,x\in V$.
\end{prop}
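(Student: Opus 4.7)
The plan is to fix a point $x \in V$ and reinterpret the claim as the assertion that a certain linear functional on $V$ is represented by an element of $V$ via the pairing $\left\langle -,-\right\rangle$; this is essentially the Riesz-type representation argument adapted to a non-degenerate bilinear form. First I would observe that, for fixed $x$, the map $u \mapsto \partial_{u}\phi(x)$ is a linear functional on $V$. This is immediate from the linearity properties of plain derivatives in $u$ discussed in Section~\ref{subsec:75}: additivity follows from $\partial_{u+u'}\phi(x) = \partial_{u}\phi(x)+\partial_{u'}\phi(x)$ and homogeneity from $\partial_{\lambda u}\phi(x) = \lambda\,\partial_{u}\phi(x)$, both of which are instances of the general linearity pattern established there.

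Next, I would introduce the ``flat'' map $\flat : V \to V^{*}$ sending $w \mapsto \langle w, -\rangle$. Linearity of $\flat$ is obvious from bilinearity of $\langle-,-\rangle$, and injectivity of $\flat$ is exactly the non-degeneracy hypothesis: if $\langle w, u\rangle = 0$ for all $u$ then $w = 0$. Uniqueness of $\nabla\phi(x)$ then follows directly: if $w_{1}$ and $w_{2}$ both represent $u \mapsto \partial_{u}\phi(x)$, then $\langle w_{1} - w_{2}, u\rangle = 0$ for every $u \in V$, so $w_{1} = w_{2}$.

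For existence, the point is to show that the linear functional $u \mapsto \partial_{u}\phi(x)$ lies in the image of $\flat$. In the finite-dimensional setting, injectivity of $\flat$ forces surjectivity by a dimension count ($\dim V = \dim V^{*}$), so every linear functional, in particular $u \mapsto \partial_{u}\phi(x)$, is of the form $\langle w, -\rangle$ for some $w \in V$; define $\nabla\phi(x) := w$. Varying $x \in V$ then assembles these pointwise representatives into the required function $\nabla\phi : V \to V$.

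The main obstacle is existence rather than uniqueness: uniqueness is a one-line consequence of non-degeneracy, while existence genuinely requires surjectivity of $\flat$, which is automatic in finite dimensions but not in the general algebraic setting. I expect the intended reading is finite-dimensional (consistent with the differential-geometric context of the paper), so the Riesz step reduces to the standard dual-basis argument and no analytic subtleties arise. If one wished to state the result in full generality, an extra completeness or reflexivity hypothesis would be needed; I would flag this briefly and then complete the proof under the finite-dimensional interpretation.
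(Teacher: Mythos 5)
Your argument is correct under the finite-dimensional reading you flag, and it takes a genuinely different --- and in fact tighter --- route than the paper's. The uniqueness step is the same in both: non-degeneracy of $\left\langle -,-\right\rangle $ applied to the difference of two candidate representatives. For existence, however, the paper argues pointwise in $u$: for each fixed $u\neq0$ it picks some $v$ with $\left\langle v,u\right\rangle \neq0$ and exhibits the vector $\frac{\partial_{u}\phi\left(x\right)}{\left\langle v,u\right\rangle }v$, whose pairing with that particular $u$ equals $\partial_{u}\phi\left(x\right)$. This only produces a representative depending on $u$; it does not produce the single vector $\nabla\phi\left(x\right)$ required to satisfy $\left\langle \nabla\phi\left(x\right),u\right\rangle =\partial_{u}\phi\left(x\right)$ simultaneously for all $u$, and it nowhere invokes the linearity of $u\mapsto\partial_{u}\phi\left(x\right)$, without which the claim would be false since $u\mapsto\left\langle w,u\right\rangle $ is linear for any fixed $w$. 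Your Riesz-type argument --- regard $u\mapsto\partial_{u}\phi\left(x\right)$ as an element of $V^{*}$, note that $w\mapsto\left\langle w,-\right\rangle $ is injective by non-degeneracy and hence surjective when $\dim V<\infty$ --- supplies exactly this missing step, and your remark that surjectivity is not automatic in infinite dimensions correctly identifies the hypothesis the statement silently needs. In short, your proof is the standard and complete one; the paper's existence argument, read literally, has a gap that your approach closes.
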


\begin{proof}
(Existence). (i) If $u=0$ then $\left\langle v,u\right\rangle =0$
and $\partial{}_{u}\phi\left(x\right)=0$, so $\left\langle v,u\right\rangle =\partial{}_{u}\phi\left(x\right)$
for all $v\in V.$ (ii) If $\left\langle v,u\right\rangle =0$ for
all $v\in V$ then $u=0$ since $\left\langle -,-\right\rangle $
is non-degenerate. Equivalently, if $u\neq0$ then $\left\langle v,u\right\rangle \neq0$
for some $v\in V$, and using such a vector $v$ we have 
\[
\left\langle \frac{\partial{}_{u}\phi\left(x\right)}{\left\langle v,u\right\rangle }v,u\right\rangle =\partial{}_{u}\phi\left(x\right).
\]
(Uniqueness). If $\left\langle v,u\right\rangle =\left\langle v',u\right\rangle $
for all $u\in V$ so that $\left\langle v-v',u\right\rangle =0$ for
all $u\in V$ then $v=v'$ since $\left\langle -,-\right\rangle $
is non-degenerate.
\end{proof}
We call $\nabla\phi\left(x\right)$ the \emph{gradient} of $\phi$
at $x\in V$.

\subsection{Affine geodesics}
\begin{defn}
Let $V$ be a vector space over $\mathbb{R}$ and consider $\mathscr{A}=\left(\mathbb{R},\mathbb{R},\boldsymbol{\mathfrak{ci}}{}_{\mathbb{R}}\right)$
and $\mathscr{B}=\left(V,B,\boldsymbol{\mathfrak{b}}\right)$. Consider
a function $\boldsymbol{\gamma}:\left(a,b\right)\rightarrow B$ such
that there is a strictly monotone function $\xi:\mathbb{R}\rightarrow\left(a,b\right)$
and some $v\in V$ such that
\[
\dot{\overline{\triangledown}}{}_{1}\boldsymbol{\gamma}\circ\xi\left(x\right)=\lim_{\tau\rightarrow0}\frac{\boldsymbol{\mathfrak{b}}\left(\gamma\left(\xi\left(x\right)\right)\right)^{-1}\left(\gamma\left(\xi\left(x+\tau\right)\right)\leftarrow\gamma\left(\xi\left(x\right)\right)\right)}{\tau}=v
\]
for all $x\in\mathbb{R}$. We call $\overline{\gamma}=\gamma\circ\xi$
an \emph{affine geodesic}. An affine geodesic as defined here is similar
to the notion of a geodesic with an affine parameter.
\end{defn}

If $\overline{\boldsymbol{\gamma}}$ is an affine geodesic then obviously
\begin{equation}
\overline{\triangledown}{}_{1}\left(\dot{\overline{\triangledown}}{}_{1}\overline{\boldsymbol{\gamma}}\right)\left(x\right)=0\label{eq:vanDer}
\end{equation}
 for all $x\in\mathbb{R}$, and under natural assumptions (\ref{eq:vanDer})
conversely implies that $\overline{\boldsymbol{\gamma}}$ is an affine
geodesic.

\subsection{Decomposition of reduced derivatives of vector fields}

\begin{lem}
\label{lem:34}If $\Delta_{u}v$ exists then $\lim_{\tau\rightarrow0}p^{-1}\circ\left(p+\overline{\tau u}\right)\left(\frac{\boldsymbol{v}\left(p+\overline{\tau u}\right)-\boldsymbol{v}\left(p\right)}{\tau}\right)=\delta{}_{u}\boldsymbol{v}\left(p\right)$.
\end{lem}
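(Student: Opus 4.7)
The plan is to apply the limit-commutation identity (\ref{eq:lim-lim}) to the bilinear evaluation map $(L,w)\mapsto L(w)$ on $\mathrm{End}(V)\times V$, in the same spirit as Propositions \ref{prop:26a} and \ref{prop:39}. Accordingly, I would abbreviate $P_{\tau}=p^{-1}\circ(p+\overline{\tau u})$ and $Y_{\tau}=\tau^{-1}\bigl(\boldsymbol{v}(p+\overline{\tau u})-\boldsymbol{v}(p)\bigr)$; since $\boldsymbol{\mathfrak{a}}(p)^{-1}$ and $\boldsymbol{\mathfrak{a}}(p+\overline{\tau u})$ are both linear, $P_{\tau}$ is a linear automorphism of $V$, and the quantity whose limit is sought is simply $P_{\tau}(Y_{\tau})$.

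Next I would identify the two component limits. By the definition (\ref{eq:diffw}) of the plain derivative, $\lim_{\tau\to 0}Y_{\tau}=\delta_{u}\boldsymbol{v}(p)$. For the other factor, the hypothesis that $\Delta_{u}v$ exists triggers the continuity statement (\ref{eq:cont}), which yields $\lim_{\tau\to 0}P_{\tau}(v)=v$ for every $v\in V$; in other words $P_{\tau}\to\mathrm{id}_{V}$ in the pointwise sense required by (\ref{eq:lim-lim}).

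Finally I would apply (\ref{eq:lim-lim}) to the continuous bilinear evaluation map $(L,w)\mapsto L(w)$ to commute the limit past $P_{\tau}$, obtaining
\[
\lim_{\tau\to 0}P_{\tau}(Y_{\tau})=\Bigl(\lim_{\tau\to 0}P_{\tau}\Bigr)\Bigl(\lim_{\tau\to 0}Y_{\tau}\Bigr)=\mathrm{id}_{V}\bigl(\delta_{u}\boldsymbol{v}(p)\bigr)=\delta_{u}\boldsymbol{v}(p),
\]
which is the required identity. Should one wish to avoid any operator-norm subtlety, the same conclusion follows from the algebraic split
\[
P_{\tau}(Y_{\tau})-\delta_{u}\boldsymbol{v}(p)=P_{\tau}\bigl(Y_{\tau}-\delta_{u}\boldsymbol{v}(p)\bigr)+\bigl(P_{\tau}(\delta_{u}\boldsymbol{v}(p))-\delta_{u}\boldsymbol{v}(p)\bigr),
\]
in which the second summand tends to $0$ by (\ref{eq:cont}) applied to the vector $\delta_{u}\boldsymbol{v}(p)$, while the first summand is controlled by combining the linearity of each $P_{\tau}$ with the fact that $Y_{\tau}-\delta_{u}\boldsymbol{v}(p)\to 0$. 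I do not foresee a deeper obstacle: the lemma is essentially a clean swap of a limit with a family of linear maps converging to the identity, and the only point requiring care is precisely the justification of that swap, which (\ref{eq:lim-lim}) is designed to handle.
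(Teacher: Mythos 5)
Your proof is correct and follows essentially the same route as the paper's: the paper splits the expression as $X\left(\tau\right)+Y\left(\tau\right)$ with $X\left(\tau\right)=P_{\tau}\left(Y_{\tau}\right)-Y_{\tau}$ and invokes (\ref{eq:cont}) to make $X\left(\tau\right)$ vanish, which is exactly the limit swap your argument performs via (\ref{eq:lim-lim}). Your alternative decomposition $P_{\tau}\left(Y_{\tau}\right)-\delta_{u}\boldsymbol{v}\left(p\right)=P_{\tau}\left(Y_{\tau}-\delta_{u}\boldsymbol{v}\left(p\right)\right)+\left(P_{\tau}\left(\delta_{u}\boldsymbol{v}\left(p\right)\right)-\delta_{u}\boldsymbol{v}\left(p\right)\right)$ is in fact slightly more careful than the paper's own, since it applies (\ref{eq:cont}) to the fixed vector $\delta_{u}\boldsymbol{v}\left(p\right)$ rather than to the $\tau$-dependent quantity $Y_{\tau}$.
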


\begin{proof}
Set $X\left(\tau\right)=p^{-1}\circ\left(p+\overline{\tau u}\right)\left(\frac{\boldsymbol{v}\left(p+\overline{\tau u}\right)-\boldsymbol{v}\left(p\right)}{\tau}\right)-\frac{\boldsymbol{v}\left(p+\overline{\tau u}\right)-\boldsymbol{v}\left(p\right)}{\tau}$
and $Y\left(\tau\right)=\frac{\boldsymbol{v}\left(p+\overline{\tau u}\right)-\boldsymbol{v}\left(p\right)}{\tau}$.
Using (\ref{eq:cont}), we obtain
\begin{align*}
\lim_{\tau\rightarrow0}p^{-1}\circ\left(p+\overline{\tau u}\right)\left(\frac{\boldsymbol{v}\left(p+\overline{\tau u}\right)-\boldsymbol{v}\left(p\right)}{\tau}\right) & =\lim_{\tau\rightarrow0}\left(X\left(t\right)+\left(Y\left(t\right)\right)\right)=\lim_{\tau\rightarrow0}X\left(t\right)+\lim_{\tau\rightarrow0}Y\left(t\right)\\
 & =\lim_{\tau\rightarrow0}Y\left(t\right)=\lim_{\tau\rightarrow0}\frac{\boldsymbol{v}\left(p+\overline{\tau u}\right)-\boldsymbol{v}\left(p\right)}{\tau}=\delta{}_{u}\boldsymbol{v}\left(p\right).\qedhere
\end{align*}
\end{proof}
\begin{prop}
\label{prop:p29}Consider vector spaces $U\subseteq V$, pointwise
affine systems $\mathscr{A}=\left(U,A,\boldsymbol{\mathfrak{a}}\right)$,
$\mathscr{B}=\left(V,V,\boldsymbol{\mathfrak{b}}\right)$ and a vector
field $\boldsymbol{v}:A\rightarrow V$ Then. \textup{
\begin{equation}
\nabla_{u}\boldsymbol{v}\left(p\right)=\delta{}_{u}\boldsymbol{v}\left(p\right)+\Delta{}_{u}\boldsymbol{v}\left(p\right).\label{eq:p30}
\end{equation}
 }
\end{prop}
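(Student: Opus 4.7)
The plan is to split the numerator defining $\nabla_u \boldsymbol{v}(p)$ into two pieces by adding and subtracting the term $p^{-1}\circ(p+\overline{\tau u})(\boldsymbol{v}(p))$, and then to identify one piece with the pseudo-derivative $\Delta_u \boldsymbol{v}(p)$ and the other with the plain derivative $\delta_u \boldsymbol{v}(p)$ via Lemma~\ref{lem:34}.

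Concretely, writing $P_\tau = p^{-1}\circ(p+\overline{\tau u})$ and exploiting the fact that $P_\tau$ is a linear map on $V$ (since both $\boldsymbol{\mathfrak{a}}(p)^{-1}$ and $\boldsymbol{\mathfrak{a}}(p+\overline{\tau u})$ are linear), I would rewrite
\[
P_\tau(\boldsymbol{v}(p+\overline{\tau u})) - \boldsymbol{v}(p) = P_\tau\bigl(\boldsymbol{v}(p+\overline{\tau u}) - \boldsymbol{v}(p)\bigr) + \bigl(P_\tau(\boldsymbol{v}(p)) - \boldsymbol{v}(p)\bigr).
\]
Dividing by $\tau$ and using that the limit of a sum equals the sum of the limits (provided each exists), the first quotient tends to $\delta_u\boldsymbol{v}(p)$ by Lemma~\ref{lem:34}, while the second quotient tends to $\Delta_u\boldsymbol{v}(p)$ directly from the definition of the pseudo-derivative applied to the fixed vector $\boldsymbol{v}(p)\in V$ (invoking the convention from Section~\ref{sec:5-1} that $\Delta_u\boldsymbol{v}(p) = (\Delta_u \boldsymbol{v}(p))(p)$).

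Adding these gives the claimed identity. The only subtle point is justifying the split of the limit into two separate limits; this is legitimate because Lemma~\ref{lem:34} guarantees existence of the first limit under the standing assumption that $\Delta_u v$ exists (so the pointwise affine space is differentiable), and the second limit exists by hypothesis since $\boldsymbol{v}(p)$ is a fixed vector. I expect the linearity of $P_\tau$ to be the technical pivot — everything else is essentially arithmetic on limits, so no real obstacle arises once the add-and-subtract trick is in place.
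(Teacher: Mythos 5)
Your proposal is correct and follows essentially the same route as the paper: the paper's proof sets $X(\tau)=\frac{1}{\tau}\,p^{-1}\circ\left(p+\overline{\tau u}\right)\left(\boldsymbol{v}\left(p+\overline{\tau u}\right)-\boldsymbol{v}\left(p\right)\right)$ and $Y(\tau)=\frac{1}{\tau}\left(p^{-1}\circ\left(p+\overline{\tau u}\right)\left(\boldsymbol{v}\left(p\right)\right)-\boldsymbol{v}\left(p\right)\right)$, which is exactly your add-and-subtract decomposition using linearity of $P_\tau$, and then invokes Lemma \ref{lem:34} for the first limit and the definition of $\Delta_u$ for the second. No substantive difference.
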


\begin{proof}
Set $X\left(\tau\right)=\frac{p^{-1}\circ\left(p+\overline{\tau u}\right)\left(\boldsymbol{v}\left(p+\overline{\tau u}\right)-\boldsymbol{v}\left(p\right)\right)}{\tau}$
and $Y\left(\tau\right)=\frac{p^{-1}\circ\left(p+\overline{\tau u}\right)\left(\boldsymbol{v}\left(p\right)\right)-\boldsymbol{v}\left(p\right)}{\tau}$.
Using Lemma \ref{lem:34}, we obtain 
\begin{align*}
\nabla_{u}\boldsymbol{v}\left(p\right) & =\lim_{\tau\rightarrow0}\frac{p^{-1}\circ\left(p+\overline{\tau u}\right)\left(\boldsymbol{v}\left(p+\overline{\tau u}\right)\right)-\boldsymbol{v}\left(p\right)}{\tau}\\
 & =\lim_{\tau\rightarrow0}\left(X\left(\tau\right)+Y\left(\tau\right)\right)=\lim_{\tau\rightarrow0}X\left(\tau\right)+\lim_{\tau\rightarrow0}Y\left(\tau\right)\\
 & =\lim_{\tau\rightarrow0}p^{-1}\circ\left(p+\overline{\tau u}\right)\left(\frac{\boldsymbol{v}\left(p+\overline{\tau u}\right)-\boldsymbol{v}\left(p\right)}{\tau}\right)+\lim_{\tau\rightarrow0}\frac{p^{-1}\circ\left(p+\overline{\tau u}\right)\left(\boldsymbol{v}\left(p\right)\right)-\boldsymbol{v}\left(p\right)}{\tau}\\
 & =\delta{}_{u}\boldsymbol{v}\left(p\right)+\Delta{}_{u}\boldsymbol{v}\left(p\right).\qedhere
\end{align*}
\end{proof}
\begin{cor}
\label{cor:c34}Consider vector spaces $U\subseteq V$, pointwise
affine systems $\mathscr{A}=\left(U,U,\boldsymbol{\mathfrak{a}}\right)$,
$\mathscr{B}=\left(V,V,\boldsymbol{\mathfrak{b}}\right)$ and a vector
field $\boldsymbol{v}:U\rightarrow V$ Then. 
\begin{equation}
\triangledown{}_{u}\boldsymbol{v}\left(x\right)=\partial{}_{u}\boldsymbol{v}\left(x\right)+\Delta{}_{u}\boldsymbol{v}\left(x\right).\label{eq:c31-1}
\end{equation}
\end{cor}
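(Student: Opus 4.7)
The plan is to mirror the proof of Proposition \ref{prop:p29} almost verbatim, since Corollary \ref{cor:c34} is simply the ``plain vector space'' counterpart of that statement. The only conceptual point is that when the base point space is a vector space $U$, the notational identity $x+\overline{u}=x+u$ from Section \ref{subsec:74} turns $\nabla_u$, $\delta_u$ into $\triangledown_u$, $\partial_u$; the affine-action factor $x^{-1}\circ(x+\overline{\tau u})$, however, is still controlled by the (possibly nontrivial) pointwise action $\boldsymbol{\mathfrak{a}}$ on $U$, so the pseudo-derivative $\Delta_u\boldsymbol{v}(x)$ survives the transition.

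Concretely, I would start from the definition (\ref{eq:p2-1}) of $\triangledown_u\boldsymbol{v}(x)$ and insert a telescoping term by adding and subtracting $x^{-1}\circ(x+\overline{\tau u})(\boldsymbol{v}(x))$ in the numerator. This splits the quotient into
\[
\frac{x^{-1}\circ(x+\overline{\tau u})\bigl(\boldsymbol{v}(x+\tau u)-\boldsymbol{v}(x)\bigr)}{\tau}+\frac{x^{-1}\circ(x+\overline{\tau u})(\boldsymbol{v}(x))-\boldsymbol{v}(x)}{\tau},
\]
using linearity of $x^{-1}\circ(x+\overline{\tau u})$ in its argument. The second summand tends to $\Delta_u\boldsymbol{v}(x)$ by the definition of the pseudo-derivative in Section \ref{subsec:42}, applied to the pointwise affine system $\mathscr{A}=(U,U,\boldsymbol{\mathfrak{a}})$.

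For the first summand I need the flat-space analogue of Lemma \ref{lem:34}: namely that
\[
\lim_{\tau\to 0}x^{-1}\circ(x+\overline{\tau u})\!\left(\frac{\boldsymbol{v}(x+\tau u)-\boldsymbol{v}(x)}{\tau}\right)=\partial_u\boldsymbol{v}(x).
\]
The argument is identical to Lemma \ref{lem:34}: write the expression as $X(\tau)+Y(\tau)$, where $Y(\tau)=\frac{\boldsymbol{v}(x+\tau u)-\boldsymbol{v}(x)}{\tau}$ and $X(\tau)$ is the difference between the linearly transported quotient and $Y(\tau)$; then (\ref{eq:cont}) forces $X(\tau)\to 0$, and $Y(\tau)\to\partial_u\boldsymbol{v}(x)$ by (\ref{eq:p1}). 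Combining the two limits via (\ref{eq:lim-lim}) yields (\ref{eq:c31-1}).

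The only real subtlety (and the one step worth pausing on) is verifying that (\ref{eq:cont}) applies in this flat setting, i.e.\ that $\mathscr{A}=(U,U,\boldsymbol{\mathfrak{a}})$ is differentiable at $x$ in the sense of Definition \ref{def:11}. This is implicit in the hypothesis that $\Delta_u\boldsymbol{v}(x)$ in (\ref{eq:c31-1}) exists, so there is no real obstacle; everything else is a routine transcription of the proof of Proposition \ref{prop:p29} with the substitutions $p\mapsto x$, $\nabla_u\mapsto\triangledown_u$, $\delta_u\mapsto\partial_u$.
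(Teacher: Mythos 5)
Your proposal is correct and follows essentially the same route as the paper: the paper's own proof of Corollary \ref{cor:c34} consists precisely of setting $X\left(\tau\right)=\frac{p^{-1}\circ\left(p+\overline{\tau u}\right)\left(\boldsymbol{v}\left(p+\tau u\right)-\boldsymbol{v}\left(p\right)\right)}{\tau}$ and $Y\left(\tau\right)=\frac{p^{-1}\circ\left(p+\overline{\tau u}\right)\left(\boldsymbol{v}\left(p\right)\right)-\boldsymbol{v}\left(p\right)}{\tau}$ and then re-running the argument of Proposition \ref{prop:p29}, which is exactly your telescoping decomposition together with the flat-space analogue of Lemma \ref{lem:34}. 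Your explicit attention to why (\ref{eq:cont}) still applies is a detail the paper leaves implicit, but it does not change the argument.
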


\begin{proof}
Set $X\left(\tau\right)=\frac{p^{-1}\circ\left(p+\overline{\tau u}\right)\left(\boldsymbol{v}\left(p+\tau u\right)-\boldsymbol{v}\left(p\right)\right)}{\tau}$
and $Y\left(\tau\right)=\frac{p^{-1}\circ\left(p+\overline{\tau u}\right)\left(\boldsymbol{v}\left(p\right)\right)-\boldsymbol{v}\left(p\right)}{\tau}$.
\end{proof}

\subsection{\label{subsec:94}Decomposition of derivatives of covector fields}

Recall that $\delta{}_{u}\left(\varphi\left(\boldsymbol{v}\right)\right)\left(p\right)=\varphi\left(\nabla_{u}\boldsymbol{v}\right)\left(p\right)+\left(\delta_{u}\varphi\right)\left(\boldsymbol{v}\right)\left(p\right)$
by Corollary \ref{cor:40}, so equivalently 
\begin{equation}
\left(\delta_{u}\varphi\right)\left(\boldsymbol{v}\right)\left(p\right)=\delta{}_{u}\left(\varphi\left(\boldsymbol{v}\right)\right)\left(p\right)-\varphi\left(\nabla_{u}\boldsymbol{v}\right)\left(p\right).\label{eq:cuvf}
\end{equation}
This result is often used as a definition of the derivative $\delta_{u}\varphi$
of a covector field, requiring (\ref{eq:cuvf}) to hold for every
vector field $\boldsymbol{v}$ and point $p$.

We can also use (\ref{eq:cuvf}) to derive a decomposition of derivatives
of covector fields analogous to the decomposition of derivatives of
vector fields above. 
\begin{prop}
Consider a covector field $\varphi:A\rightarrow V^{*}$ and a vector
field $\boldsymbol{v}:A\rightarrow V$. If $\boldsymbol{v}\left(x\right)=v$
for all $x\in A$ then 
\begin{equation}
\left(\delta_{u}\varphi\right)\left(\boldsymbol{v}\right)\left(p\right)=\delta{}_{u}\left(\varphi\left(\boldsymbol{v}\right)\right)\left(p\right)-\varphi\left(\Delta{}_{u}\boldsymbol{v}\right)\left(p\right).\label{eq:p41}
\end{equation}
\end{prop}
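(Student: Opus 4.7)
The plan is to combine the covector product rule (\ref{eq:cuvf}) with the decomposition of the reduced derivative from Proposition \ref{prop:p29}, and then exploit the hypothesis that $\boldsymbol{v}$ is a constant vector field in order to kill the plain-derivative term.

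First, I would write down (\ref{eq:cuvf}) from Corollary \ref{cor:40} without any hypothesis on $\boldsymbol{v}$:
\[
\left(\delta_{u}\varphi\right)\left(\boldsymbol{v}\right)\left(p\right)=\delta_{u}\left(\varphi\left(\boldsymbol{v}\right)\right)\left(p\right)-\varphi\left(\nabla_{u}\boldsymbol{v}\right)\left(p\right).
\]
By Proposition \ref{prop:p29} the reduced derivative decomposes as $\nabla_{u}\boldsymbol{v}\left(p\right)=\delta_{u}\boldsymbol{v}\left(p\right)+\Delta_{u}\boldsymbol{v}\left(p\right)$. Since $\varphi(p)\in V^{*}$ is linear on $V$, this yields
\[
\varphi\left(\nabla_{u}\boldsymbol{v}\right)\left(p\right)=\varphi\left(\delta_{u}\boldsymbol{v}\right)\left(p\right)+\varphi\left(\Delta_{u}\boldsymbol{v}\right)\left(p\right).
\]

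Next, I invoke the hypothesis that $\boldsymbol{v}\left(x\right)=v$ for all $x\in A$. From the definition (\ref{eq:diffw}) of the plain derivative to $V$,
\[
\delta_{u}\boldsymbol{v}\left(p\right)=\lim_{\tau\rightarrow0}\frac{\boldsymbol{v}\left(p+\overline{\tau u}\right)-\boldsymbol{v}\left(p\right)}{\tau}=\lim_{\tau\rightarrow0}\frac{v-v}{\tau}=0,
\]
so $\varphi\left(\delta_{u}\boldsymbol{v}\right)\left(p\right)=\varphi\left(p\right)\left(0\right)=0$. Substituting back gives $\varphi\left(\nabla_{u}\boldsymbol{v}\right)\left(p\right)=\varphi\left(\Delta_{u}\boldsymbol{v}\right)\left(p\right)$, and plugging this into (\ref{eq:cuvf}) yields precisely (\ref{eq:p41}).

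There is no real obstacle: the only subtlety is recognising that $\varphi\left(\nabla_{u}\boldsymbol{v}\right)\left(p\right)$ means $\varphi\left(p\right)\!\bigl(\nabla_{u}\boldsymbol{v}\left(p\right)\bigr)$, so that linearity of $\varphi\left(p\right)$ (not of the field $\varphi$) justifies distributing $\varphi$ across the sum $\delta_{u}\boldsymbol{v}\left(p\right)+\Delta_{u}\boldsymbol{v}\left(p\right)$. Once this is noted, the argument reduces to two one-line substitutions.
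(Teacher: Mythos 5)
Your proof is correct and follows exactly the paper's own argument: substitute the decomposition $\nabla_{u}\boldsymbol{v}=\delta_{u}\boldsymbol{v}+\Delta_{u}\boldsymbol{v}$ from Proposition \ref{prop:p29} into (\ref{eq:cuvf}), distribute $\varphi\left(p\right)$ by linearity, and note that constancy of $\boldsymbol{v}$ kills the $\delta_{u}\boldsymbol{v}$ term. Your explicit remark about why $\varphi$ distributes over the sum is a helpful clarification the paper leaves implicit, but the route is the same.
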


\begin{proof}
By (\ref{eq:cuvf}) and Proposition \ref{prop:p29}, we have
\begin{align*}
\left(\delta_{u}\varphi\right)\left(\boldsymbol{v}\right)\left(p\right) & =\delta_{u}\left(\varphi\left(\boldsymbol{v}\right)\right)\left(p\right)-\varphi\left(\nabla_{u}\boldsymbol{v}\right)\left(p\right)\\
 & =\delta{}_{u}\left(\varphi\left(\boldsymbol{v}\right)\right)\left(p\right)-\varphi\left(\delta{}_{u}\boldsymbol{v}+\Delta{}_{u}\boldsymbol{v}\right)\left(p\right)\\
 & =\delta{}_{u}\left(\varphi\left(\boldsymbol{v}\right)\right)\left(p\right)-\varphi\left(\delta{}_{u}\boldsymbol{v}\right)\left(p\right)-\varphi\left(\Delta{}_{u}\boldsymbol{v}\right)\left(p\right),
\end{align*}
where $\boldsymbol{\boldsymbol{v}}\left(x\right)=v$ for all $x\in A$
implies $\delta{}_{u}\boldsymbol{v}=0$ (whereas this does not imply
that $\Delta{}_{u}\boldsymbol{v}=0$).
\end{proof}
\begin{cor}
If $\mathscr{A}=\left(U,U,\boldsymbol{\mathfrak{ci}}_{U}\right)$
and $\boldsymbol{v}\left(x\right)=v$ for all $x\in X$ then 
\begin{equation}
\left(\partial{}_{u}\varphi\right)\left(\boldsymbol{v}\right)\left(x\right)=\partial_{u}\left(\varphi\left(\boldsymbol{v}\right)\right)\left(x\right)-\varphi\left(\Delta{}_{u}v\right)\left(x\right).\label{eq:c42}
\end{equation}
\end{cor}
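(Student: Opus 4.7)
The plan is to mirror the proof of the preceding proposition, but work with the plain-on-a-vector-space derivatives $\partial_u$ and $\triangledown_u$ instead of $\delta_u$ and $\nabla_u$. The hypothesis $\mathscr{A}=\left(U,U,\boldsymbol{\mathfrak{ci}}_{U}\right)$ is exactly the context for Corollary \ref{cor:40-1} and Corollary \ref{cor:c34}, so everything should line up formally.

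First, I would invoke Corollary \ref{cor:40-1}, which gives
\[
\partial_{u}\left(\varphi\left(\boldsymbol{v}\right)\right)\left(x\right)=\varphi\left(\triangledown_{u}\boldsymbol{v}\right)\left(x\right)+\left(\partial_{u}\varphi\right)\left(\boldsymbol{v}\right)\left(x\right),
\]
and rearrange to isolate $\left(\partial_{u}\varphi\right)\left(\boldsymbol{v}\right)\left(x\right)$ as $\partial_{u}\left(\varphi\left(\boldsymbol{v}\right)\right)\left(x\right)-\varphi\left(\triangledown_{u}\boldsymbol{v}\right)\left(x\right)$. This is the analog of equation \eqref{eq:cuvf} in the vector-space-as-point-space setting.

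Next, I would apply Corollary \ref{cor:c34}, which decomposes $\triangledown_{u}\boldsymbol{v}\left(x\right)=\partial_{u}\boldsymbol{v}\left(x\right)+\Delta_{u}\boldsymbol{v}\left(x\right)$, and substitute into the expression above, using linearity of $\varphi$ in its argument, to get
\[
\left(\partial_{u}\varphi\right)\left(\boldsymbol{v}\right)\left(x\right)=\partial_{u}\left(\varphi\left(\boldsymbol{v}\right)\right)\left(x\right)-\varphi\left(\partial_{u}\boldsymbol{v}\right)\left(x\right)-\varphi\left(\Delta_{u}\boldsymbol{v}\right)\left(x\right).
\]

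Finally, I would use the hypothesis that $\boldsymbol{v}\left(x\right)=v$ is constant on $X$. Directly from the definition \eqref{eq:p1} of $\partial_{u}\boldsymbol{v}\left(x\right)$, the numerator $\boldsymbol{v}\left(x+\tau u\right)-\boldsymbol{v}\left(x\right)$ vanishes identically, so $\partial_{u}\boldsymbol{v}\left(x\right)=0$, whereas $\Delta_{u}\boldsymbol{v}\left(x\right)=\Delta_{u}v\left(x\right)$ need not vanish (it is intrinsic to the pointwise affine structure, not to the variation of $\boldsymbol{v}$). Dropping the zero term yields the asserted identity \eqref{eq:c42}. There is no real obstacle here; the only thing to be careful about is noting explicitly that constancy of $\boldsymbol{v}$ kills $\partial_{u}\boldsymbol{v}$ but not $\Delta_{u}\boldsymbol{v}$, exactly as in the parenthetical remark at the end of the preceding proof.
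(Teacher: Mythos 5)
Your proposal is correct and is essentially the paper's own argument: the paper's proof consists of the single instruction ``Use Corollary \ref{cor:40-1},'' with the understanding that one mirrors the preceding proposition's proof, which is exactly what you do by combining Corollary \ref{cor:40-1} with the decomposition $\triangledown_{u}\boldsymbol{v}=\partial_{u}\boldsymbol{v}+\Delta_{u}\boldsymbol{v}$ of Corollary \ref{cor:c34} and noting that constancy of $\boldsymbol{v}$ kills $\partial_{u}\boldsymbol{v}$ but not $\Delta_{u}\boldsymbol{v}$. You have simply written out the steps the paper leaves implicit.
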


\begin{proof}
Use Corollary \ref{cor:40-1}.
\end{proof}

\subsection{The Lie derivative $\mathcal{L}_{\boldsymbol{u}}\boldsymbol{v}$
and torsion revisited}

By Definition \ref{def:53} , $T_{uv}(x)=\Delta_{u}v\left(x\right)-\Delta_{v}u\left(x\right)$,
and replacing the vectors $u,v$ by the vector fields $\boldsymbol{u},\boldsymbol{v}$
gives $T_{\boldsymbol{uv}}(x)=\Delta_{\boldsymbol{u}}\boldsymbol{v}\left(x\right)-\Delta_{\boldsymbol{v}}\boldsymbol{u}\left(x\right)$. 

As an \emph{Ansatz}, we define the \emph{Lie derivative} $\mathcal{L}_{\boldsymbol{u}}\boldsymbol{v}\left(x\right)$
of $\boldsymbol{v}$ along $\boldsymbol{u}$ at $x$ by setting
\begin{equation}
\mathcal{L}_{\boldsymbol{u}}\boldsymbol{v}\left(x\right):=\left[\left(\partial{}_{\boldsymbol{u}}\boldsymbol{v}\right)\left(x\right)\right]=\left(\partial{}_{\boldsymbol{u}}\boldsymbol{v}\right)\left(x\right)-\left(\partial{}_{\boldsymbol{v}}\,\boldsymbol{u}\right)\left(x\right).\label{eq:6542}
\end{equation}
 In view of (\ref{eq:6542}), $\mathcal{L}_{\boldsymbol{u}}\boldsymbol{v}\left(x\right)$
depends on $\boldsymbol{u}$ and $\boldsymbol{v}$, not only on $\boldsymbol{u}\left(x\right)$
and $\boldsymbol{v}\left(x\right)$.

By Proposition (\ref{prop:p29}), we have $\partial{}_{\boldsymbol{u}}\boldsymbol{v}\left(x\right)=\triangledown_{\boldsymbol{u}}\boldsymbol{v}\left(x\right)-\Delta{}_{\boldsymbol{u}}\boldsymbol{v}\left(x\right)$,
so it follows from (\ref{eq:6542}) that
\[
\mathcal{L}_{\boldsymbol{u}}\boldsymbol{v}=\left(\triangledown{}_{\boldsymbol{u}}\,\boldsymbol{v}-\Delta{}_{\boldsymbol{u}}\,\boldsymbol{v}\right)-\left(\triangledown{}_{\boldsymbol{v}}\,\boldsymbol{u}-\Delta{}_{\boldsymbol{v}}\,\boldsymbol{u}\right)=\triangledown_{\boldsymbol{u}}\boldsymbol{v}-\triangledown_{\boldsymbol{v}}\boldsymbol{u}-T_{\boldsymbol{uv}}.
\]
On the other hand, this result can be used to define torsion in terms
of derivatives, setting
\[
T_{\boldsymbol{uv}}=\triangledown{}_{\boldsymbol{u}}\boldsymbol{v}-\triangledown{}_{\boldsymbol{v}}\boldsymbol{u}-\mathcal{L}_{\boldsymbol{u}}\boldsymbol{v}.
\]
We have thus recovered the usual definition of torsion by defining
$\mathcal{L}_{\boldsymbol{u}}\boldsymbol{v}$ as in (\ref{eq:6542}).
However, this definition is somewhat unnatural, since it gives the
impression that $T_{\boldsymbol{uv}}\left(x\right)$ depends on $\boldsymbol{u}$,
$\boldsymbol{v}$ and $x$, whereas Definition \ref{def:53} makes
clear that $T_{\boldsymbol{uv}}\left(x\right)$ depends only on $\boldsymbol{u}\left(x\right)$,
$\boldsymbol{v}\left(x\right)$ and $x$.

\subsection{\label{subsec:s96}The Koszul connection $\nabla_{\boldsymbol{u}}\boldsymbol{v}$
and the pseudo-derivative $\Delta_{\boldsymbol{u}}\boldsymbol{v}$}

Consider a tuple of vector fields $\left(\boldsymbol{e}_{1},\ldots,\boldsymbol{e_{n}}\right)$
such that $\boldsymbol{e}_{i}\left(x\right)=e_{i}\in V$ for all $x\in X$,
where $\left(e_{1},\ldots,e_{n}\right)$ is a basis for $V$. Then
Proposition \ref{prop:p29} implies that
\[
\nabla_{\boldsymbol{e}_{i}}\boldsymbol{e}_{j}\left(x\right)=\delta{}_{\boldsymbol{e}_{i}}\boldsymbol{e}_{j}\left(x\right)+\Delta{}_{\boldsymbol{e}_{i}}\boldsymbol{e}_{j}\left(x\right)=\Delta{}_{\boldsymbol{e}_{i}}\boldsymbol{e}_{j}\left(x\right)
\]
for every $i,j=1,\ldots,n$ since each $\boldsymbol{e}_{j}$ is a
constant vector field. This equality shows that the derivative $\nabla_{\boldsymbol{u}}\boldsymbol{v}$
carries information about curvature, similar to $\Delta{}_{\boldsymbol{u}}\boldsymbol{v}$
(and $\Gamma_{ij}^{k}$), motivating the term ''connection''.

By Proposition \ref{prop:87}, Corollary \ref{prop:33} and other
results in this article, (1) -- (2) below hold when interpreting
$\boldsymbol{\nabla}_{\boldsymbol{u}}\boldsymbol{v}$ as $\nabla{}_{\boldsymbol{u}}\boldsymbol{v}$,
$\triangledown{}_{\boldsymbol{u}}\boldsymbol{v}$, $\delta_{\boldsymbol{u}}\boldsymbol{v}$
or $\partial_{\boldsymbol{u}}\boldsymbol{v}$, and (3) holds at least
when interpreted as $\nabla\delta{}_{u}\left(\phi\boldsymbol{v}\right)\left(p\right)=\phi\left(\nabla_{u}\boldsymbol{v}\right)\left(p\right)+\left(\delta{}_{u}\phi\right)\boldsymbol{v}\left(p\right)$
or $\triangledown\partial_{u}\left(\phi\boldsymbol{v}\right)\left(p\right)=\phi\left(\triangledown{}_{u}\boldsymbol{v}\right)\left(p\right)+\left(\partial{}_{u}\phi\right)\boldsymbol{v}\left(p\right)$.
\begin{enumerate}
\item %
\begin{comment}
\ensuremath{\nabla}fX1+X2Y=f\ensuremath{\nabla}X1Y+\ensuremath{\nabla}X2Y
with f\ensuremath{\in}C\ensuremath{\infty}(M) and X1,X2,Y\ensuremath{\in}D(M) 
\end{comment}
$\boldsymbol{\nabla}_{\left(\phi\boldsymbol{u}+\boldsymbol{u}'\right)}\boldsymbol{v}=\phi\boldsymbol{\nabla}_{\boldsymbol{u}}\boldsymbol{v}+\boldsymbol{\nabla}_{\boldsymbol{u}'}\boldsymbol{v}$
for any scalar function $\phi$ (see Section \ref{subsec:75}).
\item %
\begin{comment}
\ensuremath{\nabla}X(\textgreek{\textlambda}Y1+Y2)=\textgreek{\textlambda}\ensuremath{\nabla}XY1+\ensuremath{\nabla}XY2,
with \textgreek{\textlambda}\ensuremath{\in}R and X,Y1,Y2\ensuremath{\in}D(M) 
\end{comment}
$\boldsymbol{\nabla}_{\boldsymbol{u}}\left(\lambda\boldsymbol{v}+\boldsymbol{v}'\right)=\lambda\boldsymbol{\nabla}_{\boldsymbol{u}}\boldsymbol{v}+\boldsymbol{\nabla}_{\boldsymbol{u}}\boldsymbol{v}'$
for any $\lambda\in K$ (see Section \ref{subsec:75}).
\item %
\begin{comment}
\ensuremath{\nabla}X(fY)=f\ensuremath{\nabla}XY+X(f)Y, where f\ensuremath{\in}C\ensuremath{\infty}(M)
and X,Y\ensuremath{\in}D(M)
\end{comment}
$\boldsymbol{\nabla}_{\boldsymbol{u}}\left(\phi\boldsymbol{v}\right)=\phi\left(\boldsymbol{\nabla}_{\boldsymbol{u}}\boldsymbol{v}\right)+\left(\boldsymbol{\nabla}_{\boldsymbol{u}}\phi\right)\boldsymbol{v}$
for any scalar function $\phi$. 
\end{enumerate}
Conversely, it has been shown that there is only one function $\nabla:\left(\boldsymbol{u},\boldsymbol{v}\right)\mapsto\nabla_{\boldsymbol{u}}\boldsymbol{v}$
satisfying these conditions, so a connection can also be defined as
a function $\nabla:\left(\boldsymbol{u},\boldsymbol{v}\right)\mapsto\nabla_{\boldsymbol{u}}\boldsymbol{v}$
satisfying (1) -- (3). This is the so-called Koszul connection.

Note, though, that we cannot equate the Koszul connection $\nabla_{\boldsymbol{u}}\boldsymbol{v}$
with the pseudo-derivative $\Delta{}_{\boldsymbol{u}}\boldsymbol{v}$
or the corresponding affine connection $\boldsymbol{C}$ (Definition
\ref{def:45}) since we do not have $\nabla_{\boldsymbol{u}}\boldsymbol{v}=\Delta{}_{\boldsymbol{u}}\boldsymbol{v}$
unless $\boldsymbol{v}$ is a constant vector field so that $\delta{}_{\boldsymbol{u}}\boldsymbol{v}=0$.
This is related to the fact that $\Delta_{\boldsymbol{u}}\boldsymbol{v}\left(x\right)=\Delta_{\boldsymbol{u}\left(x\right)}\boldsymbol{v}\left(x\right)\left(x\right)$
for all $x$, meaning that $\Delta{}_{\boldsymbol{u}}\boldsymbol{v}$
does not depend on $\boldsymbol{u}\left(x'\right)$ or $\boldsymbol{v}\left(x'\right)$
for any $x'\neq x$. In fact, we do not need the notion of a vector
field to define a connection and specify the curvature of a pointwise
affine space since we can use $\Delta_{u}v$ instead of $\Delta{}_{\boldsymbol{u}}\boldsymbol{v}$,
$\nabla_{\boldsymbol{\boldsymbol{u}}}\boldsymbol{v}$ etc.

\section{Final remarks}

For practical purposes, the crucial innovation in the approach to
differential geometry suggested in this article is the pseudo-derivative
$\Delta_{u}v$. As suggested by the similarity of relation (\ref{eq:curvi}),
\begin{gather*}
\nabla_{u}\boldsymbol{v}\left(x\right)\sim_{\overline{C}}\:\overline{u}^{i}\frac{\partial\overline{\boldsymbol{v}}^{k}}{\partial\overline{x}^{i}}\left(x\right)+\overline{u}^{i}\overline{\boldsymbol{v}}^{j}\Gamma_{ij}^{k}\left(x\right),
\end{gather*}
and equation (\ref{eq:c31-1}), 
\begin{gather*}
\triangledown{}_{u}\boldsymbol{v}\left(x\right)=\partial{}_{u}\boldsymbol{v}\left(x\right)+\Delta{}_{u}\boldsymbol{v}\left(x\right),
\end{gather*}
$\Delta_{u}v$ is a coordinate-free counterpart to $u^{i}v^{j}\Gamma_{ij}^{k}$.
Simultaneously, $\Delta_{u}v$ is closely related to the Koszul connection
$\nabla{}_{u}\boldsymbol{v}$ as discussed in Section \ref{subsec:s96}.
Note that $\Delta_{u}v$ allows simple, natural definitions of several
central notions because $\Delta_{u}v$ measures the curvature of a
space directly, not indirectly via its effect on vector fields on
that space. For example, torsion is defined by $T_{uv}=\Delta_{u}v-\Delta_{v}u$
rather than the usual $T_{\boldsymbol{uv}}=\nabla{}_{\boldsymbol{u}}\boldsymbol{v}-\nabla{}_{\boldsymbol{v}}\boldsymbol{u}-\mathcal{L}_{\boldsymbol{u}}\boldsymbol{v}$.

From a more abstract point of view, pointwise affine spaces generalize
affine spaces, while manifolds generalize tuple spaces such as $\mathbb{R}^{n}$.
Accordingly, the two approaches employ different strategies to specify
curvature and derivatives.

Specifically, for each point $p$ of a manifold $M$ there is a vector
space $T_{p}M$, the tangent space at $p$. All tangent spaces for
a particular manifold are isomorphic, but not canonically isomorphic;
instead we equip $M$ with a choice of isomorphisms $\mathscr{I}{}_{\!pp'}:T_{p}M\rightarrow T_{p'}M$
for certain pairs of points $p,p'$. While we cannot subtract vectors
in different vector spaces, the difference 
\[
\boldsymbol{v}\left(p\right)-\mathscr{I}{}_{\!pp'}^{-1}\left(\boldsymbol{v}\left(p'\right)\right)
\]
exists for any vector field $\boldsymbol{v}$ provided that $\mathscr{I}{}_{\!pp'}$
exists, and we can use expressions of this form to define a derivative
$\nabla_{u}\boldsymbol{v}$ of $\boldsymbol{v}$ when a choice of
$\mathscr{I}{}_{\!pp'}$ has been made for each point pair $p,p'$
such that $p'$ is sufficiently close to $p$. Such derivatives can
be used to specify the curvature of a manifold. 

For pointwise affine spaces, there is a single group $V$ that acts
on the point space $A$. For each point, there is now a particular
way in which $V$ acts on $A$ rather than a particular tangent space.
In this case, $\boldsymbol{v}\left(p\right)$ and $\boldsymbol{v}\left(p'\right)$
belong to the same vector space, isomorphic, in possibly different
ways for different points of $A$, to the vector space of translations
on $A$. Thus, the difference $\boldsymbol{v}\left(p\right)-\boldsymbol{v}\left(p'\right)$
always exists and can be used to define derivatives, so constructions
such as tangent spaces, tangent bundles and sections of tangent bundles
are redundant in this approach. For example, a vector field is defined
simply as a function on a point space with values in a fixed vector
space. Moreover, an automorphism on $V$ of the form
\[
v\mapsto\boldsymbol{\mathfrak{a}}\left(p\right)^{-1}\circ\boldsymbol{\mathfrak{a}}\left(p'\right)\left(v\right)
\]
can be used to define $\Delta_{u}v\left(p\right)$ and specify the
affine curvature at $p$ of a pointwise affine space.

The simplicity of pointwise affine spaces is partly a consequence
of reduced scope and generality rather than of simplification of unnecessarily
cumbersome definitions. We obviously obtain a more complicated space
by adding a metric to a pointwise affine space, and we nave not considered
general tensors. Moreover, a pointwise affine space is a special,
uncomplicated manifold. A general manifold has a global ''shape''
which cannot be changed by changing its curvature locally: it may
be a surface, a sphere, a torus etc. A pointwise affine space is a
manifold with a trivial shape. To obtain a manifold with a non-trivial
shape such as a sphere we can ''glue together'' more than one trivial-shape
manifolds. 

Generalizing the notion of a pointwise affine space with the definition
of a manifold or a Cartan geometry as a model, we first note that
an open subset of a topological space can be equipped with the structure
of a pointwise affine space. Given a vector space $V$ and a point
space $A$ with a suitable topology, we can thus assign to every $p\in A$
a pointwise affine system $\left(V,U_{p},\boldsymbol{\mathfrak{a}}_{p}\right)$,
where $U_{p}$ is an open neighborhood of $p$ and $\boldsymbol{\mathfrak{a}}_{p}$
is an affine action field on $U_{p}$. Thus, for every $p\in A$ it
is possible to obtain a pseudo-derivative or derivative at $p$ by
using $\boldsymbol{\mathfrak{a}}_{p}$. It is not immediately clear,
though, what compatibility conditions, if any, that would have to
be imposed on the set $\left\{ \boldsymbol{\mathfrak{a}}_{p}\mid p\in A\right\} $
to make a definition along these lines work. This problem will be
left open here.%
\begin{comment}
Thus, we can in principle use a cover of open subsets of a topological
space to construct a generalized pointwise affine space. It is clear,
though, that we will require certain consistency conditions to hold
in order to make such a definition work. Specifically, consider three
pointwise affine systems $\mathscr{A},\mathscr{B},\mathscr{C}$ with
open point sets $A,B,C$, let $\boldsymbol{\mathfrak{a}}$ be the
affine action field for $A$, and let $\Delta_{u}v\left[\boldsymbol{\mathfrak{x}}\right]$
depend on the affine action field $\boldsymbol{\mathfrak{x}}$. At
least the following conditions are necessary to ensure consistency:
\begin{enumerate}
\item If $A\cap B\neq\emptyset$ then there is an affine action field $\boldsymbol{\mathfrak{b}}$
on $B$ such that $\boldsymbol{\mathfrak{a}}$ and $\boldsymbol{\mathfrak{b}}$
are compatible, meaning that $\Delta_{u}v\left[\boldsymbol{\mathfrak{a}}\right]\left(p\right)=\Delta_{u}v\left[\boldsymbol{\mathfrak{b}}\right]\left(p\right)$
for all $p\in A\cap B$.
\item If $A\cap B,A\cap C,B\cap C\neq\emptyset$ then there are affine action
fields $\boldsymbol{\mathfrak{b}}$ and $\boldsymbol{\mathfrak{c}}$
on $B$ and $C$, respectively, which are compatible with $\boldsymbol{\mathfrak{a}}$
and with each other.
\end{enumerate}
\end{comment}

Anyhow, while a pointwise affine space corresponds only to a special
manifold which can be covered by a single coordinate chart, initially
regarding curved spaces as pointwise affine spaces may make them easier
to understand. Also, in many applications the general theory is not
needed.

\end{document}